\documentclass[12pt, a4paper]{amsart}
\usepackage{hyperref, fullpage, amsmath, amsfonts, amsthm, amssymb, stmaryrd}

\theoremstyle{plain}
\newtheorem{Theorem}{Theorem}[section]
\newtheorem{Lemma}[Theorem]{Lemma}
\newtheorem{Proposition}[Theorem]{Proposition}
\newtheorem{Corollary}[Theorem]{Corollary}
\newtheorem{Observation}[Theorem]{Observation}

\theoremstyle{definition}
\newtheorem{Definition}[Theorem]{Definition}

\theoremstyle{remark}
\newtheorem{Remark}[Theorem]{Remark}
\newtheorem{Construction}[Theorem]{Construction}
\newtheorem{Example}[Theorem]{Example}

\author{Arno Fehm}
\address{Fachbereich Mathematik und Statistik, University of 
Konstanz, 78457 Konstanz, Germany}
\email{arno.fehm@uni-konstanz.de}

\author{Franziska Jahnke}
\address{Institut f\"ur Mathematische Logik und Grundlagenforschung,
University of M\"unster, Einsteinstr.\;62, 48149 M\"unster, Germany}
\email{franziska.jahnke@wwu.de}

\bibliographystyle{plain}

\begin{document}

\title{On the quantifier complexity of 
definable canonical henselian valuations}

\begin{abstract}
We discuss definability in the language of rings without parameters of the unique canonical henselian valuation of a field.
We show that in most cases where the canonical henselian valuation is definable, it is already definable
by a universal-existential or an existential-universal formula.
\end{abstract}

\maketitle

\section{Introduction} 
A number of new results about
definability and definitions of henselian valuations without parameters in the 
language of rings have been proven recently. Here, a valuation $v$ on a field
$K$ is called
\emph{$\emptyset$-definable} if its valuation ring 
$\mathcal{O}_v$ is a first-order 
parameter-free definable subset of the field $K$.
Whereas some of the new developments focus more on the existence of definable 
henselian valuations (\cite{H}, 
\cite{JK0}), others put a further emphasis on the
quantifier complexity of the formulae involved 
(\cite{CDLM}, \cite{AK}, \cite{F}). Inspired
by the latter, Prestel has proven characterizations 
when a valuation in an elementary
class of valued fields
is uniformly $\emptyset$-$\exists$-definable, 
$\emptyset$-$\forall$-definable, $\emptyset$-$\forall\exists$-definable
or 
$\emptyset$-$\exists\forall$-definable (see Theorem \ref{P} and \cite{P}). 
These criteria work via the compactness theorem
and hence only give the existence of, rather than explicit, formulae.
A natural question arising from Prestel's 
results is whether indeed every $\emptyset$-definable henselian valuation is 
already $\emptyset$-$\forall\exists$-definable
or
$\emptyset$-$\exists\forall$-definable.

Since a field can carry a vast amount of inequivalent henselian valuations -- some of which are definable, and some of which are not -- it seems hopeless to get a general classification of the quantifier complexity of arbitrary definable henselian valuations.
However, every field $K$ carries a unique {\em canonical} henselian valuation,
and the task of classifying those according to their quantifier complexity turns out to be much more sensible and feasible.
Unless $K$ is separably closed, 
this canonical henselian valuation 
is non-trivial whenever $K$ admits some non-trivial henselian valuation (in which case one also calls the field itself {\em henselian})
and in many cases is {\em the} most interesting henselian valuation on $K$.
The goal of this paper is to show that, at least in residue characteristic zero, apart from very exceptional situations, the canonical henselian valuation on a field $K$
is always $\emptyset$-$\forall\exists$-definable or $\emptyset$-$\exists\forall$-definable, as soon as it is $\emptyset$-definable at all.

We first treat the simplified setting of canonical {\em $p$-henselian} valuations (cf.~Section~\ref{sec:p} for definitions and details)
in which we get the best result one can hope for: Depending on whether its residue field is $p$-closed or not, the canonical $p$-henselian valuation
is either $\emptyset$-$\forall\exists$-definable or $\emptyset$-$\exists\forall$-definable whenever it is
$\emptyset$-definable at all (see Propositions \ref{p1} and \ref{p2}).

Although the definition of the canonical {\em henselian} valuation (which we recall in Section~\ref{sec:not}) suggests a case distinction between separably closed and non-separably closed residue field, it turns out that here the dividing line between $\exists\forall$ and $\forall\exists$ runs somewhere else:

\begin{Theorem} \label{main}
Let $K$ be a field with canonical henselian valuation $v_K$ whose residue field $F=Kv_K$ has characteristic zero.
Assume that $v_K$ is $\emptyset$-definable.
\begin{enumerate}
\item If $F$ \underline{is not} elementarily equivalent to a henselian field, then $v_K$ is $\emptyset$-$\exists\forall$-definable.
\item If $F$ \underline{is} elementarily equivalent to a henselian field,
 then $v_K$ is $\emptyset$-$\forall\exists$-definable if the absolute Galois group $G_F$ of $F$ is a small profinite group.
\end{enumerate}
\end{Theorem}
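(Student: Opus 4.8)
The plan is to reduce both parts to Prestel's characterisations (Theorem~\ref{P}), applied to the elementary class $\mathcal{C}$ of valued fields consisting of the pairs $(K',v_{K'})$ with $K'\equiv K$ equipped with its canonical henselian valuation. The first thing to check is that this is legitimate, i.e.\ that $\mathcal{C}$ really is an elementary class of valued fields in which the distinguished valuation is uniformly $\emptyset$-definable: by hypothesis $\mathcal{O}_{v_K}$ is defined in $K$ by some formula $\psi$ in the language of rings, and one needs that $\psi$ defines $\mathcal{O}_{v_{K'}}$, again the canonical henselian valuation, in every $K'\equiv K$. For this \emph{transfer step} I would use the structural description of $v_K$ recalled in Section~\ref{sec:not} together with three soft facts: henselianity is elementary in the ring language expanded by a predicate for the valuation ring; in residue characteristic $0$ ``separably closed'' coincides with ``algebraically closed'', and algebraic closedness of a field is elementary and hence preserved under $\equiv$; and ``being elementarily equivalent to a henselian field'' is itself preserved under $\equiv$, since it is axiomatised by the Prestel--Ziegler scheme of $t$-henselianity axioms. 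Consequently all the relevant invariants ($F$ algebraically closed or not, $F$ $t$-henselian or not, whether $G_F$ is small) are properties of the class $\mathcal{C}$ rather than of $K$ alone.

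Granting this, Theorem~\ref{P} reduces part~(1) to exhibiting an $\emptyset$-$\exists\forall$-definition and part~(2) to an $\emptyset$-$\forall\exists$-definition of the distinguished valuation uniformly in $\mathcal{C}$, and its criteria turn these into preservation statements along inclusions of members of $\mathcal{C}$ sandwiched inside elementary extensions: writing $\rho(x)$ for ``$x$ lies in the valuation ring'', $\rho$ is $\exists\forall$ over the theory of $\mathcal{C}$ precisely when, for all $K_0\subseteq K_1\subseteq K_2$ in $\mathcal{C}$ with $K_0\preceq K_2$ and all $a\in K_0$, $a\in\mathcal{O}_{v_{K_0}}$ implies $a\in\mathcal{O}_{v_{K_1}}$ --- that is, the restriction of $v_{K_1}$ to $K_0$ is a coarsening of $v_{K_0}$ --- and dually $\rho$ is $\forall\exists$ precisely when that restriction is a refinement of $v_{K_0}$. (Here $v_{K_0}=v_{K_2}|_{K_0}$ already holds by the transfer step, since $K_0\preceq K_2$ and $\psi$ defines the valuation in both.) So the whole problem becomes: control, in each of the two cases, how the canonical henselian valuation behaves when one passes to a larger field inside an elementary extension.

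For part~(1), assume $F$ --- hence the residue field of $v_{K'}$ for every $(K',v_{K'})\in\mathcal{C}$ --- is not $t$-henselian, so in residue characteristic $0$ that residue field carries no nontrivial henselian valuation and is not algebraically closed; it follows that $v_{K'}$ is then the unique \emph{finest} henselian valuation on $K'$, i.e.\ $\mathcal{O}_{v_{K'}}=\bigcap\{\mathcal{O}_v:v\text{ henselian on }K'\}$, since any henselian refinement would induce a nontrivial henselian valuation on a field elementarily equivalent to $F$. I would use this rigidity to verify the coarsening condition above: the only way it could fail is that the restriction of $v_{K_1}$ to $K_0$ is in part strictly finer than $v_{K_0}$, and such extra refinement would again be visible, via a place, as forbidden henselian structure on a residue field $\equiv F$; the extra room supplied by the ambient elementary extension $K_2$ is what lets one run such an argument. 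Where a direct comparison is awkward, I would fall back on the already-proved $p$-henselian case: for a suitable prime $p$ the canonical henselian and canonical $p$-henselian valuations coincide in this setting, and Proposition~\ref{p1} (resp.~\ref{p2}) already provides an $\emptyset$-$\exists\forall$-definition there.

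For part~(2), assume $F$ is $t$-henselian and $G_F$ is small; now I need the refinement condition, i.e.\ $\emptyset$-$\forall\exists$-definability, and this is the step I expect to be the main obstacle. The difficulty is that, although every residue field occurring in the sandwich $K_0\subseteq K_1\subseteq K_2$ is elementarily equivalent to $F$, the residue field of the intermediate field $K_1$ need not be, and a priori it could acquire enough new finite --- hence potentially henselian --- structure that the canonical henselian valuation of $K_1$ becomes strictly coarser than $v_{K_0}$ on $K_0$, violating the required condition. Smallness of $G_F$ is exactly what excludes this: it bounds, uniformly over $\mathcal{C}$, the finite separable extensions of the residue fields, so that the residue field of $K_1$ cannot gain such structure and, like $F$, carries no nontrivial henselian valuation, which pins down $v_{K_1}$. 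Carrying this out requires separating the subcase $F$ algebraically closed (cleanest, since algebraic closedness is elementary) from the subcase $F$ $t$-henselian but not henselian, and transporting the ``henselian-like'' behaviour of $F$ through the elementary equivalences in $\mathcal{C}$, since $F$ itself need not be henselian. Once the refinement condition has been checked, Theorem~\ref{P} yields the desired $\emptyset$-$\forall\exists$-definition, completing the proof.
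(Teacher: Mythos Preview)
Your approach to part~(1) is essentially the paper's: once $F$ is not $t$-henselian, Observation~\ref{Obs:t} gives that $v_K$ is $\emptyset$-definable \emph{as such}, your class $\mathcal{C}$ is elementary, and the restriction argument via Lemma~\ref{Lemhen} and Theorem~\ref{P} goes through (this is Proposition~\ref{Prop:H1}). The fallback to the $p$-henselian case is unnecessary and would not work in general, since there is no reason $v_K=v_K^p$ here.

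For part~(2), however, your transfer step is the genuine gap. You claim that the formula $\psi$ defining $v_K$ in $K$ defines $v_{K'}$ in every $K'\equiv K$, but this is false exactly in the subcase you flag as difficult: when $F$ is $t$-henselian but not algebraically closed. Observation~\ref{Obs:t} shows that $v_K$ is $\emptyset$-definable as such iff $F$ is separably closed or not $t$-henselian, and Example~\ref{Ex:t} exhibits a field satisfying all the hypotheses of (2) (including $G_F$ small) for which $v_K$ is \emph{not} $\emptyset$-definable as such. So your class $\mathcal{C}=\{(K',v_{K'}):K'\equiv K\}$ is not an elementary class of valued fields, and Theorem~\ref{P} cannot be applied to it. Your diagnosis that smallness prevents the residue field from ``gaining henselian structure'' is also off: since $F$ is $t$-henselian, some $L\equiv F$ \emph{is} henselian, regardless of smallness.

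The paper circumvents this by applying Theorem~\ref{P} to the class $\{(L,v):(L,v)\equiv(K,v_K)\}$ instead, where $v$ need not equal $v_L$. The substance of the proof (Theorem~\ref{small}) then lies in ingredients your sketch does not mention: from $\emptyset$-definability one extracts the value-group obstruction $v_KK\not\equiv\mathbb{Q}\oplus v_KK$; smallness combined with Koenigsmann's Galois characterisation of tamely branching valuations (Theorem~\ref{tame}) forces henselian valuations on fields $\equiv F$ to have divisible value group and gives $F\equiv F((\mathbb{Q}))$ (Proposition~\ref{kQ}); and a Galois-theoretic rigidity argument (epimorphisms between small isomorphic profinite groups are isomorphisms) rules out proper coarsenings. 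The role of smallness is thus to control value groups and Galois groups, not to bound finite extensions of residue fields directly.
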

Recall that $G_F$ is small iff $F$ has only finitely many Galois extensions of degree $n$ for every $n \in \mathbb{N}$.
Thus, case (2) includes in particular the important cases, let us call them (2a) and (2b), where $F$ is algebraically closed resp.\ real closed.

The proof of (1) is straightforward and does not even need the assumption that $v_K$ is $\emptyset$-definable (Proposition \ref{Prop:H1}).
Also in case (2a), a direct proof gives a stronger result than stated here (Corollary \ref{MC}).
The general case (2) is more difficult to handle.
In fact, if the residue field of the canonical henselian valuation is not separably closed, then it is never henselian.
Except for separably or real closed fields, very few examples of 
fields $F$ that are not henselian but elementarily equivalent to a henselian field are known, and we do not know whether in this case the canonical henselian valuation on $K$ is always
$\emptyset$-$\forall\exists$-definable or $\emptyset$-$\exists\forall$-definable in general.
The case we can handle, namely when the absolute Galois group is small,
is proven using, among other things, Koenigsmann's Galois characterization of 
tamely branching valuations (Theorem \ref{small}).
In the last section, we construct an example to demonstrate that such fields with small absolute Galois groups do exist (Proposition \ref{Prop:small}).
We also construct several examples along the way to show that, in general, our results can not be improved in terms of quantifier complexity (see 
Examples \ref{Ex:H2}, \ref{Ex:H1p}, \ref{Ex:H22}, \ref{Ex:H1}, and \ref{Ex:t}).

\section{Notation and some facts} \label{sec:not}
Throughout this paper, we use the following notation: 
For a valued field $(K,v)$,
we denote the valuation ring by ${\mathcal O}_v$, the maximal ideal of $\mathcal{O}_v$ by ${\mathfrak m}_v$, the residue field by $Kv$
and the value group by $vK$.
For an element $a \in {\mathcal O}_v$, we write 
$\overline{a}$ to refer to its image in $Kv$.
For valuations $v$ and $w$ on $K$ we write $v\subseteq w$ to indicate that $v$ is finer than $w$, i.e.\ $\mathcal{O}_v\subseteq\mathcal{O}_w$.
We denote by $K^{\rm sep}$ a fixed separable closure of $K$ and by $G_K={\rm Gal}(K^{\rm sep}|K)$ the absolute Galois group of $K$.

Several of our examples involve power series fields. For a field $F$ and an
ordered abelian group $\Gamma$, we write
$$
 F((\Gamma))=F((t^\Gamma))=\Big\{ \sum\limits_{\gamma \in \Gamma} a_\gamma t^\gamma \mid a_\gamma\in F, \{\gamma \in \Gamma \mid a_\gamma \neq 0\} 
\textrm{ is well-ordered}\Big\}
$$
for the field of generalized power series over $F$ with exponents in $\Gamma$.
The {\em power series valuation} 
$$ 
 v\Big(\sum\limits_{\gamma \in \Gamma} a_\gamma t^\gamma\Big) := \mathrm{min}\{\gamma \in \Gamma \mid a_\gamma \neq 0\}
$$
is a henselian valuation on $F((\Gamma))$ with residue field $F$ and value group $\Gamma$. 
We write $F(t^\Gamma)$ for the subfield of $F((t^\Gamma))$ generated over $F$ by the monomials $t^\gamma$ for $\gamma\in\Gamma$.
See \cite[\S4.2]{Efrat} for more details of this construction. 
If $\Gamma_1$ and $\Gamma_2$ are ordered abelian groups we denote by $\Gamma_1\oplus\Gamma_2$ their
inverse lexicographic product.
There is then a natural isomorphism $F((\Gamma_1\oplus\Gamma_2))\cong F((\Gamma_1))((\Gamma_2))$.

All our definitions will be obtained from the following theorem of Prestel \cite[Characterization Theorem]{P}:
\begin{Theorem}[Prestel] \label{P}
Let $\Sigma$ be a first order axiom system in the ring language
$\mathcal{L}_{\rm ring}$ together with a unary predicate $\mathcal{O}$. Then there exists an
$\mathcal{L}_{\rm ring}$-formula $\phi(x)$, defining uniformly
in every model $(K, \mathcal{O})$ of $\Sigma$ the set $\mathcal{O}$, 
of quantifier type
\begin{align*}
\exists & \textrm{ iff }(K_1 \subseteq K_2 \Rightarrow \mathcal{O}_1
\subseteq \mathcal{O}_2) \\
\forall & \textrm{ iff }(K_1 \subseteq K_2 \Rightarrow \mathcal{O}_2 \cap K_1
\subseteq \mathcal{O}_1) \\
\exists\forall & \textrm{ iff }(K_1 \prec_\exists K_2 \Rightarrow \mathcal{O}_1
\subseteq \mathcal{O}_2) \\
\forall\exists & \textrm{ iff }(K_1 \prec_\exists K_2 \Rightarrow \mathcal{O}_2
\cap K_1 \subseteq \mathcal{O}_1) 
\end{align*}
for all models $(K_1, \mathcal{O}_1)$, $(K_2, \mathcal{O}_2)$ of $\Sigma$.
Here $K_1 \prec_\exists K_2$ means that $K_1$ is existentially closed in
$K_2$, i.e.\;every existential 
$\mathcal{L}_{\rm ring}$-formula $\rho(x_1, \dotsc,x_m)$
with parameters from $K_1$ that holds in $K_2$ also holds in $K_1$.
\end{Theorem}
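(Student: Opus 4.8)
The plan is to prove the four displayed equivalences; two of them are redundant. Replacing the distinguished predicate $\mathcal{O}$ by its field-complement $K\setminus\mathcal{O}$ -- formally, passing to the axiom system obtained from $\Sigma$ by replacing every occurrence of $\mathcal{O}(t)$ by $\neg\mathcal{O}'(t)$ for a fresh predicate $\mathcal{O}'$, whose models are exactly the $(K,K\setminus\mathcal{O})$ with $(K,\mathcal{O})\models\Sigma$ -- interchanges the conditions ``$\mathcal{O}_1\subseteq\mathcal{O}_2$'' and ``$\mathcal{O}_2\cap K_1\subseteq\mathcal{O}_1$'', and interchanges $\exists$- with $\forall$-definitions and $\exists\forall$- with $\forall\exists$-definitions; so it suffices to prove the cases $Q=\exists$ and $Q=\exists\forall$. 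In these two cases the direction ``quantifier type $\Rightarrow$ preservation condition'' is immediate: if $\phi$ is existential and defines $\mathcal{O}$ in every model of $\Sigma$, then $K_1\subseteq K_2$ forces $\mathcal{O}_1=\phi(K_1)\subseteq\phi(K_2)=\mathcal{O}_2$ because existential formulas ascend along extensions; and if $\phi=\exists\bar y\,\forall\bar z\,\psi$ defines $\mathcal{O}$ and $K_1\prec_\exists K_2$, then for $a\in\mathcal{O}_1$ with witnesses $\bar b\in K_1$ the universal statement $\forall\bar z\,\psi(a,\bar b,\bar z)$ holds in $K_1$ and hence in $K_2$, whence $a\in\phi(K_2)=\mathcal{O}_2$.

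For the converse in the case $Q=\exists$ I would run the compactness argument behind the \L{}o\'s--Tarski theorem, but arranged to output an $\mathcal{O}$-free formula. Let $\Gamma(x)$ be the set of existential $\mathcal{L}_{\rm ring}$-formulas $\gamma(x)$ with $\Sigma\models\forall x\,(\mathcal{O}(x)\to\gamma(x))$; since a finite conjunction of existential formulas is existential, it suffices to show $\Sigma\cup\Gamma(c)\models\mathcal{O}(c)$, because compactness then yields a finite $\Gamma_0\subseteq\Gamma$ with $\Sigma\models\forall x\,(\mathcal{O}(x)\leftrightarrow\bigwedge\Gamma_0(x))$. So suppose $(K_2,\mathcal{O}_2)\models\Sigma$, $a\in K_2\setminus\mathcal{O}_2$, and $(K_2,\mathcal{O}_2)\models\gamma(a)$ for all $\gamma\in\Gamma$. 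Then $\Sigma\cup\{\mathcal{O}(c)\}$ together with the parameter-free universal $\mathcal{L}_{\rm ring}$-type of $a$ is consistent: otherwise there are parameter-free universal $\mathcal{L}_{\rm ring}$-formulas $\pi_1,\dots,\pi_n$ with $\Sigma\models\forall x\,(\mathcal{O}(x)\to\bigvee_i\neg\pi_i(x))$, and the existential formula $\bigvee_i\neg\pi_i$ then lies in $\Gamma$ and so is true of $a$ in $K_2$, contradicting $K_2\models\bigwedge_i\pi_i(a)$. A model $(K_1,\mathcal{O}_1)$ of that theory, with $c$ naming some $b\in\mathcal{O}_1$, realizes the parameter-free universal $\mathcal{L}_{\rm ring}$-type of $a$, so by the usual diagram argument $K_1$ embeds over $b\mapsto a$ into the $\mathcal{L}_{\rm ring}$-reduct of some $(\mathcal{L}_{\rm ring}\cup\{\mathcal{O}\})$-elementary extension $(K_2^{\ast},\mathcal{O}_2^{\ast})$ of $(K_2,\mathcal{O}_2)$. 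Identifying $K_1$ with its image gives models $(K_1,\mathcal{O}_1)$ and $(K_2^{\ast},\mathcal{O}_2^{\ast})$ of $\Sigma$ with $K_1\subseteq K_2^{\ast}$ and $a\in\mathcal{O}_1\setminus\mathcal{O}_2^{\ast}$, contradicting the preservation hypothesis. Nothing here is special to $\mathcal{L}_{\rm ring}$, so the statement holds over an arbitrary base language.

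For the converse in the case $Q=\exists\forall$ I would reduce to the case $Q=\exists$ by a definitional expansion. Let $\mathcal{L}_{\rm ring}^{+}$ be $\mathcal{L}_{\rm ring}$ together with a fresh relation symbol $R_\sigma$ for each existential $\mathcal{L}_{\rm ring}$-formula $\sigma$, of the arity of the free variables of $\sigma$, and let $\Sigma^{+}$ be $\Sigma$ together with the axioms $\forall\bar x\,(R_\sigma(\bar x)\leftrightarrow\sigma(\bar x))$. Forgetting the $R_\sigma$'s is a bijection between models of $\Sigma^{+}$ and models of $\Sigma$, and two such models satisfy $K_1\subseteq K_2$ as $\mathcal{L}_{\rm ring}^{+}$-structures precisely when $K_1\prec_\exists K_2$; thus the $\exists\forall$-preservation hypothesis for $\mathcal{O}$ over $\Sigma$ is exactly the $\exists$-preservation hypothesis for $\mathcal{O}$ over $\Sigma^{+}$ with base language $\mathcal{L}_{\rm ring}^{+}$. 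By the case $Q=\exists$, $\mathcal{O}$ is defined over $\Sigma^{+}$ by an existential $\mathcal{L}_{\rm ring}^{+}$-formula $\phi^{+}$; replacing each atom $R_\sigma(\bar t)$ in $\phi^{+}$ by $\sigma(\bar t)$ produces an $\mathcal{L}_{\rm ring}$-formula defining $\mathcal{O}$ over $\Sigma$, which standard prenex manipulations turn into an $\exists\forall$-formula -- here one uses that the $\exists\forall$-formulas are closed under finite conjunctions and disjunctions after renaming bound variables apart, a disjunction of universal formulas with disjoint quantifier blocks being again universal.

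I do not expect a genuine obstacle: the whole argument is a compactness argument in the style of \L{}o\'s--Tarski. The point that I expect to cost the most thought is that $\mathcal{O}$ is a distinguished predicate \emph{outside} the base language, so that ``$\mathcal{O}$ is preserved under substructures'' is not the trivial statement it would be for an atomic formula of the ambient language: one must set up the compactness argument so that it outputs a formula of the base language, and the preservation hypothesis -- which, crucially, ranges over \emph{all} pairs of models of $\Sigma$ with $K_1\subseteq K_2$, not only over substructure pairs in the full language -- is then used exactly once, at the final step. Beyond this there is only routine bookkeeping around the expansion $\Sigma^{+}$. Finally, the case $Q=\forall\exists$ follows from $Q=\exists\forall$ by the complementation fixed at the outset, with the dual bookkeeping and no new idea.
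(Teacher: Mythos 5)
The paper does not prove this theorem: it states it with the attribution ``Prestel'' and the citation \cite[Characterization Theorem]{P}, using it throughout as a black box. So there is no in-paper proof to compare against, and I can only evaluate your argument on its own terms and against the standard approach.

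Your proof is correct, and it is the argument one would expect: a compactness proof in the spirit of the \L{}o\'s--Tarski and Chang--\L{}o\'s--Suszko preservation theorems, adjusted for the wrinkle that $\mathcal{O}$ lives outside the base language $\mathcal{L}_{\rm ring}$, so the output formula must be $\mathcal{O}$-free. The reduction scheme is efficient: the complementation trick ($\mathcal{O}\leftrightarrow K\setminus\mathcal{O}$ dualizes both the quantifier type and the inclusion) cuts the four cases to two, and the definitional expansion by relation symbols $R_\sigma$ for existential $\sigma$ -- a form of Morleyization restricted to the existential fragment -- collapses $\exists\forall$ to $\exists$, since in the expanded language the substructure relation becomes exactly $\prec_\exists$. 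The heart of the matter is the compactness argument showing $\Sigma\cup\Gamma(c)\models\mathcal{O}(c)$: you correctly isolate that the preservation hypothesis is used exactly once, at the last step, after first realizing the universal $\mathcal{L}_{\rm ring}$-type of $a$ inside a model with $c\in\mathcal{O}$ and then embedding that model over $a$ into an elementary extension of $(K_2,\mathcal{O}_2)$. The final prenexing step is also handled correctly: a positive Boolean combination of $\exists\forall$-formulas is again $\exists\forall$ because a disjunction of universal formulas (with renamed-apart bound variables) is universal, and prefixing with more existential quantifiers preserves $\exists\forall$. One small remark: it is worth noting explicitly, as you do implicitly, that the $Q=\exists$ argument must be carried out over an arbitrary relational expansion of $\mathcal{L}_{\rm ring}$, since the subsequent application uses the infinite signature $\mathcal{L}_{\rm ring}^{+}$; this causes no difficulty because the compactness argument only ever touches finitely many symbols and the resulting $\phi^{+}$ mentions only finitely many $R_\sigma$. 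I see no gap; this is almost certainly the same route as Prestel's, and the paper itself flags the result as a compactness-based criterion that yields existence but not explicit formulae, which matches your proof's character.
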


We use the above theorem in later sections 
to show that in order to define the canonical
henselian valuation without parameters, only formulae of a low quantifier 
complexity are needed. 
We call a field $K$ \emph{henselian} if it admits some non-trivial henselian
valuation. 
There is always
a \emph{canonical henselian valuation} on $K$. We now
recall the definition and its defining properties, details
can be found in section 4.4 of \cite{EP}.

If a field admits two independent non-trivial henselian valuations, 
then it is separably closed. This implies that 
the henselian valuations on a field
form a tree: Divide the class of henselian valuations on $K$ into
two subclasses, namely
$$H_1(K) = \{v \textrm{ henselian on } K \;|\; Kv \neq Kv^{\rm sep} \}$$
and
$$H_2(K) = \{ v \textrm{ henselian on } K \;|\; Kv = Kv^{\rm sep} \}.$$
Then, any valuation $v_2 \in H_2(K)$ 
is strictly \emph{finer} than any $v_1 \in H_1(K)$, 
i.e.~${\mathcal O}_{v_2} \subsetneq {\mathcal O}_{v_1}$,
and any two valuations in $H_1(K)$ are comparable.
Furthermore, if $H_2(K)$ is non-empty, then there exists a unique coarsest
$v_K \in H_2(K)$; otherwise there exists a unique finest $v_K \in H_1(K)$.
In either case, $v_K$ is called the \emph{canonical henselian valuation}.
Note that if $K$ is not separably closed and 
admits a non-trivial henselian valuation, then $v_K$
is non-trivial.

The definition of the canonical henselian valuation motivates the following
\begin{Lemma} \label{Lemhen}
Let $K \subset L$ be an extension of fields such that $K$ is relatively
algebraically closed in $L$. Let $w$ be a henselian valuation on $L$. Then
all of the following hold:
\begin{enumerate}
\item The restriction $v$ of $w$ to $K$ is also henselian.
\item $Kv$ is separably closed in $Lw$.
\item If $Lw$ is separably closed, then $Kv$ is also separably closed.
\end{enumerate}
\end{Lemma}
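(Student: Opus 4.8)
The plan is to prove the three assertions in order, each time combining a standard fact about henselian valuations with the hypothesis that $K$ is relatively algebraically closed in $L$; write $v = w|_K$, so that $\mathcal{O}_v = \mathcal{O}_w \cap K$, $\mathfrak{m}_v = \mathfrak{m}_w \cap K$, and $Kv$ embeds naturally into $Lw$. For (1), I would invoke the universal property of the henselization: since $(L,w)$ is a henselian valued field extending $(K,v)$, the henselization $(K^h,v^h)$ of $(K,v)$ admits a $K$-embedding into $(L,w)$ compatible with the valuations (cf.\ \cite{EP}); but $K^h/K$ is algebraic and $K$ is relatively algebraically closed in $L$, so the image of $K^h$ in $L$ equals $K$, whence $K^h = K$, which means precisely that $v$ is henselian. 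It is worth stressing that relative algebraic closedness is doing real work here: if $L/K$ were merely algebraic with $(L,w)$ henselian, then $v$ need not be henselian in general.

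For (2), we now know by (1) that $v$ is henselian. Let $\alpha \in Lw$ be separably algebraic over $Kv$, with minimal polynomial $\bar f \in Kv[X]$; as $\bar f$ is irreducible and separable, $\bar f'(\alpha) \neq 0$. Choose a monic lift $f \in \mathcal{O}_v[X] \subseteq \mathcal{O}_w[X]$ of $\bar f$ and a lift $a \in \mathcal{O}_w$ of $\alpha$. Then $f(a) \in \mathfrak{m}_w$ while $f'(a) \notin \mathfrak{m}_w$, so Hensel's Lemma in the henselian field $(L,w)$ yields a root $b \in \mathcal{O}_w$ of $f$ with $\overline{b} = \alpha$. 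Since $b$ is a root of the monic polynomial $f \in K[X]$, it is algebraic over $K$, hence $b \in K$ by relative algebraic closedness, and therefore $\alpha = \overline{b} \in Kv$. Thus $Kv$ is separably closed in $Lw$.

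For (3), fix an algebraic closure $\overline{Lw}$, so that $Kv \subseteq Lw \subseteq \overline{Lw}$ and we may take $\overline{Kv}$ to be the relative algebraic closure of $Kv$ in $\overline{Lw}$. By (2), $Kv$ is separably closed in $Lw$; and since $Lw$ is separably closed by hypothesis, it is separably closed in $\overline{Lw}$. An element of $\overline{Lw}$ separably algebraic over $Kv$ is also separably algebraic over $Lw$, so "being separably closed in" is transitive here; hence $Kv$ is separably closed in $\overline{Lw}$, and in particular in $\overline{Kv}$, i.e.\ $Kv$ is separably closed.

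None of the steps poses a serious difficulty, the lemma being mainly a repackaging of the defining property of the henselization together with Hensel's Lemma. The points that need attention are the choice of the right characterization of henselianity in (1) — together with the observation that it is relative algebraic closedness, not algebraicity of $L/K$, that is being exploited — and the verification in (2) that separability of $\alpha$ over $Kv$ is exactly what forces $f'(a)$ to be a unit, so that Hensel's Lemma applies; for inseparable $\alpha$ the argument would break down, which is precisely why (2) concerns separable rather than relative algebraic closedness.
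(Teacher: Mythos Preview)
Your proof is correct and follows essentially the same line as the paper's: for (1) the paper simply cites \cite[4.1.5]{EP}, whose content is precisely the henselization argument you wrote out; for (2) the paper likewise lifts a monic $f$ with $\bar f$ separable irreducible having a zero in $Lw$, applies henselianity of $(L,w)$ to produce a root in $L$, and uses relative algebraic closedness to force it into $K$; and (3) is in both cases an immediate consequence of (2). The only difference is level of detail --- you spell out what the paper leaves to a reference or to the reader.
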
 

\begin{proof}
\begin{enumerate}
\item 
See \cite[4.1.5]{EP}.
\item Let $f\in\mathcal{O}_v[X]$ monic such that $\bar{f}\in Kv[X]$ is separable and irreducible and has a zero in $Lw$.
Since $(L,w)$ is henselian, $f$ has a zero $a$ in $L$, which, since $K$ is algebraically closed in $L$, lies in $K$, hence in $\mathcal{O}_v$.
Thus, $\bar{a}\in Kv$ satisfies $\bar{f}(\bar{a})=0$, so ${\rm deg}(f)=1$.
\item This follows immediately from (2).
\end{enumerate}
\end{proof}

In general, the canonical henselian valuation need not be
$\emptyset$-definable. Whenever it is $\emptyset$-definable, this might
be for the `right' or for the `wrong' reason, see also the discussion in 
\cite[p.~3]{JK}. 
This motivates the next
\begin{Definition}
We say that $v_K$ is $\emptyset$-definable
{\em as such} if there is a parameter-free $\mathcal{L}_{\rm ring}$-formula
$\phi(x)$ such that for all fields $L$ with $L\equiv K$,
we have $\phi(L)=\mathcal{O}_{v_L}$.
\end{Definition}

Using Theorem \ref{P}, we can now draw some first conclusions about 
the quantifier complexity of definitions of the canonical henselian
valuation:
\begin{Observation}\label{Obshen}
Assume that $v_K$ is $\emptyset$-definable as such. Then,
\begin{enumerate}
\item if $v_K \in H_1(K)$, then $v_K$ is $\emptyset$-$\exists\forall$-definable,
\item if $v_K \in H_2(K)$, then $v_K$ is $\emptyset$-$\forall\exists$-definable.
\end{enumerate}
\end{Observation}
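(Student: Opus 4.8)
The plan is to deduce Observation \ref{Obshen} directly from Prestel's Theorem \ref{P} by analyzing how the canonical henselian valuation behaves under elementary-existential extensions. Set $\Sigma$ to be the $\mathcal{L}_{\rm ring}\cup\{\mathcal{O}\}$-theory axiomatizing those structures $(L,\mathcal{O})$ with $L\equiv K$ and $\mathcal{O}=\mathcal{O}_{v_L}$; since $v_K$ is $\emptyset$-definable as such, there is a parameter-free formula $\phi$ cutting out $\mathcal{O}_{v_L}$ in every such $L$, so $\Sigma$ is indeed first-order axiomatizable (add to ${\rm Th}(K)$ the sentence $\forall x(\mathcal{O}(x)\leftrightarrow\phi(x))$). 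Now apply Theorem \ref{P}: to get $\emptyset$-$\exists\forall$-definability in case (1) it suffices to check that whenever $(K_1,\mathcal{O}_1),(K_2,\mathcal{O}_2)\models\Sigma$ with $K_1\prec_\exists K_2$, one has $\mathcal{O}_1\subseteq\mathcal{O}_2$; for $\emptyset$-$\forall\exists$-definability in case (2) it suffices to check $\mathcal{O}_2\cap K_1\subseteq\mathcal{O}_1$ under the same hypothesis.

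The key observation is that $K_1\prec_\exists K_2$ implies $K_1$ is relatively algebraically closed in $K_2$: any $a\in K_2$ algebraic over $K_1$ satisfies an existential formula over $K_1$ (``there exists a root of its minimal polynomial''), which must already be witnessed in $K_1$, and moreover all conjugates lie in $K_1$, so $a\in K_1$. Hence Lemma \ref{Lemhen} applies to the extension $K_1\subset K_2$. Write $v_i=v_{K_i}$ and let $w$ denote the restriction of $v_2$ to $K_1$; by Lemma \ref{Lemhen}(1) $w$ is henselian on $K_1$, and by parts (2) and (3), $K_1w$ is separably closed in $K_2v_2$ and is separably closed whenever $K_2v_2$ is. The strategy is now to locate $w$ in the tree of henselian valuations on $K_1$ relative to the canonical one $v_1$, using the fact that $K_1\equiv K_2$ forces the canonical henselian valuations to sit in matching classes: $v_1\in H_i(K_1)$ iff $v_2\in H_i(K_2)$ for $i=1,2$, since membership in $H_1$ versus $H_2$ (residue field separably closed or not) is an elementary property of the pair $(K,v_K)$ and $v_K$ is $\emptyset$-definable as such.

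For case (1), $v_1\in H_1(K_1)$ and $v_2\in H_1(K_2)$, so $K_2v_2$ is not separably closed; then $K_1w$ need not be separably closed, so $w\in H_1(K_1)$, and since $v_1$ is the \emph{finest} element of $H_1(K_1)$ we get $\mathcal{O}_{v_1}\subseteq\mathcal{O}_w\subseteq\mathcal{O}_{v_2}$, where the last inclusion holds because $w$ is the restriction of $v_2$. Wait---one must argue $w$ is genuinely in $H_1$, not trivial: but $w$ coarsens $v_1$ only if we already know $\mathcal{O}_{v_1}\subseteq\mathcal{O}_w$, so instead argue directly that $w$ is comparable with $v_1$ (both henselian on $K_1$, not both making the residue field separably closed, so they do not give independent valuations) and that $v_1\subseteq w$ since $v_1$ is finest in $H_1(K_1)$; then $\mathcal{O}_{v_1}\subseteq\mathcal{O}_w\subseteq\mathcal{O}_{v_2}$. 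For case (2), $v_1\in H_2(K_1)$ and $v_2\in H_2(K_2)$, so $K_2v_2$ is separably closed, whence by Lemma \ref{Lemhen}(3) $K_1w$ is separably closed, i.e.\ $w\in H_2(K_1)$; since $v_1$ is the \emph{coarsest} element of $H_2(K_1)$ we get $\mathcal{O}_w\subseteq\mathcal{O}_{v_1}$, and therefore $\mathcal{O}_{v_2}\cap K_1=\mathcal{O}_w\subseteq\mathcal{O}_{v_1}$, which is exactly the $\forall\exists$-criterion. I expect the main obstacle to be the bookkeeping in case (1): one has to rule out that $w$ is the trivial valuation (handled because $K_1$ not separably closed admitting a nontrivial henselian valuation forces $v_1$ nontrivial, and comparability then pins down $v_1\subseteq w$) and carefully justify comparability of $w$ and $v_1$ from the dichotomy that independent henselian valuations force separable closedness.
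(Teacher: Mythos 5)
Your argument is correct and follows the same route as the paper: reduce to Prestel's Theorem \ref{P}, use that $K_1\prec_\exists K_2$ makes $K_1$ relatively algebraically closed in $K_2$ so that Lemma \ref{Lemhen} gives a henselian restriction $w$ of $v_2$ to $K_1$, then place $w$ in the valuation tree relative to $v_1$ (coarsening of the finest element of $H_1$ in case (1), refinement of the coarsest element of $H_2$ via Lemma \ref{Lemhen}(3) in case (2)). The only inefficiency is the detour in case (1) through $K_2v_2$ and comparability; the paper's shorter reason is that $v_1\in H_1(K_1)$ forces $H_2(K_1)=\emptyset$, so \emph{every} henselian valuation on $K_1$ (including $w$, trivial or not) lies in $H_1(K_1)$ and hence coarsens $v_1$, which you do eventually arrive at.
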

\begin{proof}
Note that if 
$v_K$ is $\emptyset$-definable as such, then we have 
$v_K \in H_2(K)$ iff $v_L \in H_2(L)$ for any $L \equiv K$.
Take $L,M \equiv K$ such that $L \prec_\exists M$, so in particular $L$ is 
relatively algebraically closed in $M$. By Lemma \ref{Lemhen},
the restriction $w$ of $v_M$ to $L$ is again henselian. 
\begin{enumerate} 
\item If
$v_L \in H_1(L)$, then $H_2(L)=\emptyset$, hence $w$
must be a coarsening of $v_L$. Hence $v_K$ is
$\emptyset$-$\exists\forall$-definable by Theorem \ref{P}.
\item In case $v_M \in H_2(M)$, Lemma \ref{Lemhen} implies that
$Lw=Lw^{\rm sep}$. Thus $w$ is a refinement of $v_L$ and so $v_K$ is
$\emptyset$-$\forall\exists$-definable by Theorem \ref{P}.
\end{enumerate}
\end{proof}

As we will see later on, in both cases the definitions are optimal with regard
to quantifiers:
In Example \ref{Ex:H1}, we construct a field $K$ with $v_K \in H_1(K)$ such that
$v_K$ is $\emptyset$-definable as such but not 
$\emptyset$-$\forall\exists$-definable.
Similarly, we discuss a field $K$ with 
$v_K \in H_2(K)$ and such that $v_K$ is $\emptyset$-definable as such but not
$\emptyset$-$\exists\forall$-definable in Example \ref{Ex:H22}. In particular, 
$v_K$ is in both cases in general 
neither $\emptyset$-$\exists$- nor $\emptyset$-$\forall$-definable.

\section{The canonical $p$-henselian valuation} \label{sec:p}

In this section, we discuss the canonical $p$-henselian valuation and prove analogues of the observation in the previous section.

Let $p$ be a prime and $K$ a field. 
If ${\rm char}(K)\neq p$, we denote by $\zeta_p$ a primitive $p$-th root of unity in $K^{\rm sep}$.
We define $K(p)$ to be the compositum
of all Galois extensions of $K$ of $p$-power degree inside $K^{\rm sep}$.
\begin{Definition} A valuation $v$ on $K$ is called
\emph{$p$-henselian} if $v$ extends uniquely to $K(p)$.
We call $K$ \emph{$p$-henselian} if $K$ 
admits a non-trivial
$p$-henselian valuation. 
\end{Definition}

As with henselian valuations, there is an equivalent definition involving
the lifting of zeroes from the residue field:
\begin{Proposition}[{\cite[Proposition 1.2]{Koe}}] \label{phenseq}
For a valued field $(K,v)$, the following are equivalent:
\begin{enumerate}
\item $v$ is $p$-henselian,
\item for every polynomial $f \in {\mathcal O}_v$ which splits in $K(p)$ and
every $a \in {\mathcal O}_v$ with $\bar{f}(\overline{a}) = 0$ and
$\bar{f'}(\overline{a}) \neq 0$, there exists $\alpha
\in {\mathcal O}_v$ with $f(\alpha)=0$ and $\overline{\alpha}=\overline{a}$.
\end{enumerate}
\end{Proposition}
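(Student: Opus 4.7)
The proof mirrors the classical Hensel-type characterizations of henselian valuations, restricted to polynomials that split in the $p$-closure $K(p)$ rather than in $K^{\rm sep}$. The forward direction exploits Galois-invariance of the unique extension of $v$ to $K(p)$; the converse builds an explicit counterexample polynomial using the weak approximation theorem for inequivalent valuations.

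For $(1) \Rightarrow (2)$, let $w$ denote the unique extension of $v$ to $K(p)$ (reduce to the monic case by the usual normalization). Since $f$ is monic, its roots in $K(p)$ are integral over $\mathcal{O}_v$, so $f = \prod_{i=1}^n(X - \beta_i)$ with $\beta_i \in \mathcal{O}_w$, and $\overline{f} = \prod_i(X - \overline{\beta_i})$ in $K(p)w[X]$. As $\overline{a}$ is a simple root of $\overline{f}$, there is a unique index $i_0$ with $\overline{\beta_{i_0}} = \overline{a}$. Uniqueness of $w$ forces every $\sigma \in {\rm Gal}(K(p)/K)$ to preserve $w$, so $\sigma$ induces a residue automorphism $\overline{\sigma}$ of $K(p)w$ fixing $Kv$ pointwise. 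Then $\overline{\sigma(\beta_{i_0})} = \overline{\sigma}(\overline{a}) = \overline{a}$, and uniqueness of $i_0$ forces $\sigma(\beta_{i_0}) = \beta_{i_0}$. Hence $\alpha := \beta_{i_0} \in K \cap \mathcal{O}_w = \mathcal{O}_v$ is the required lift.

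For $(2) \Rightarrow (1)$, argue by contraposition: suppose $v$ admits at least two distinct extensions to $K(p)$, and build a polynomial violating the lifting property. Some finite Galois $p$-power subextension $L/K$ already carries two distinct extensions of $v$, and since ${\rm Gal}(L/K)$ is a $p$-group and admits a composition series by normal subgroups of index $p$, a standard reduction lets us assume that $L/K$ is itself Galois of degree $p$ and that $v$ has $p$ distinct extensions $w_1, \ldots, w_p$ to $L$, necessarily all with ramification and residue degree $1$ (since $\sum e_i f_i = p$ and the $w_i$ are Galois conjugates). By weak approximation pick $\gamma \in L$ with $\gamma \equiv 1 \pmod{\mathfrak{m}_{w_1}}$ and $\gamma \equiv 0 \pmod{\mathfrak{m}_{w_i}}$ for $i \geq 2$, and set $f(X) := \prod_{\sigma \in {\rm Gal}(L/K)}(X - \sigma\gamma)$. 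Since the Galois action merely permutes $\{w_1,\ldots,w_p\}$, each $\sigma\gamma$ is integral at every $w_i$, so $f \in \mathcal{O}_v[X]$, and $f$ splits in $L \subseteq K(p)$. Reducing modulo $w_1$ (using $Lw_1 = Kv$) gives $\overline{f}(X) = (X - 1)\,X^{p-1}$, so $\overline{a} := 1$ is a simple root of $\overline{f}$. But no root of $f$ lies in $K$: if $\gamma \in K$ then $w_1(\gamma) = v(\gamma) = w_2(\gamma)$, contradicting $w_1(\gamma) = 0 < w_2(\gamma)$. This contradicts $(2)$.

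The main obstacle is precisely this reduction to the prime-degree case: property $(2)$ is stated for polynomials over the original field $K$, but the construction above naturally takes place over an intermediate field $K'$ to which $v$ extends uniquely, and it is not formally immediate that $(2)$ transfers from $(K, v)$ to $(K', v')$. One route around this is to take the norm from $K'$ down to $K$ of the polynomial $f$ produced above, checking that the resulting polynomial over $K$ inherits the desired splitting, integrality, and simple-root properties; another is to perform the construction directly over $K$ with the full extension $L/K$, at the cost of eliminating the multiplicity $(X - 1)^{|D|}$ coming from the decomposition group $D$ of $w_1$ in ${\rm Gal}(L/K)$. Working out this Galois-theoretic bookkeeping is the technical heart of Koenigsmann's argument.
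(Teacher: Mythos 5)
The paper does not prove this Proposition; it is cited verbatim from Koenigsmann \cite[Proposition~1.2]{Koe}, so there is no in-paper argument to compare against. Evaluating your proposal on its own terms:

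Your $(1)\Rightarrow(2)$ argument is correct and standard: the unique extension $w$ of $v$ to $K(p)$ is Galois-invariant, so the unique root $\beta_{i_0}$ of $f$ reducing to $\bar a$ is fixed by $\mathrm{Gal}(K(p)|K)$ and hence lies in $K\cap\mathcal{O}_w=\mathcal{O}_v$.

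Your $(2)\Rightarrow(1)$ argument has a genuine gap, and you correctly flag it at the end: the reduction to a degree-$p$ Galois step $K'\subset K'(\alpha)$ changes the base field, and condition $(2)$ is only available over $K$, not over $K'$. Neither of your two proposed repairs is immediate, and the norm trick in fact fails: if $f\in\mathcal{O}_{v'}[X]$ over the intermediate field $K'$ has $\bar f=(X-1)X^{p-1}$, then $g:=\prod_{\tau\in\mathrm{Gal}(K'|K)}f^{\tau}$ reduces, over $Kv\hookrightarrow K'v'$, to $\big((X-1)X^{p-1}\big)^{[K':K]}$ (each $\bar{f^{\tau}}=\bar\tau(\bar f)=(X-1)X^{p-1}$ since the reduction has coefficients in the prime field), so $1$ ceases to be a \emph{simple} root of $\bar g$ as soon as $[K':K]>1$, and $(2)$ is not contradicted.

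The clean way to close the gap avoids the prime-degree reduction altogether. Take a finite Galois $p$-extension $L|K$ inside $K(p)$ with $g\geq2$ extensions $w_1,\dots,w_g$ of $v$, let $D\leq G:=\mathrm{Gal}(L|K)$ be the decomposition group of $w_1$, and let $Z=L^{D}$ be the decomposition field. Then $[Z:K]=g\geq2$, $u_1:=w_1|_Z$ satisfies $Zu_1=Kv$, and $v$ has at least two pairwise incomparable extensions to $Z$ (they correspond to $D\backslash G/D$). By weak approximation in $Z$, pick $\gamma\in Z$ with $\gamma\equiv1\pmod{\mathfrak m_{u_1}}$ and $\gamma\equiv0\pmod{\mathfrak m_u}$ for every other extension $u$ of $v$ to $Z$; then $\gamma\notin K$. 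Let $f\in K[X]$ be the minimal polynomial of $\gamma$. Its roots are $\sigma\gamma$ for coset representatives $\sigma H$, $H:=\mathrm{Gal}(L|K(\gamma))\supseteq D$; each is integral at every $w_i$, so $f\in\mathcal{O}_v[X]$, and $f$ splits in $L\subseteq K(p)$. Since $\overline{\sigma\gamma}=1$ iff $\sigma\in D\subseteq H$, the only coset contributing residue $1$ is $H$ itself, so $\bar f=(X-1)X^{d-1}$ with $d=\deg f>1$ and $\bar a=1$ a \emph{simple} root. But $f$ is irreducible of degree $>1$, so it has no root in $K$, contradicting $(2)$. This is essentially the ``bookkeeping'' you gesture at in your last paragraph; the key move you were missing is to do the approximation inside the decomposition field $Z$ and then take the minimal polynomial over $K$, rather than to iterate degree-$p$ steps.
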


The following facts can be found in \cite{Koe}.
If $K$ admits two independent non-trivial $p$-henselian valuations, 
then $K=K(p)$. We can once more divide the 
class of $p$-henselian valuations on $K$ into
two subclasses, namely
$$
 H_1^p(K) = \{v \textrm{ $p$-henselian on } K\; |\; Kv \neq Kv(p) \}
$$
and
$$
 H_2^p(K) = \{ v \textrm{ $p$-henselian on } K\; |\; Kv = Kv(p) \}.
$$
Then, any valuation $v_2 \in H_2^p(K)$ 
is strictly \emph{finer} than any $v_1 \in H_1^p(K)$, 
and any two valuations in $H_1^p(K)$ are comparable.
Furthermore, if $H_2^p(K)$ is non-empty, then there exists a unique coarsest
$v_K^p \in H_2^p(K)$; otherwise there exists a unique finest 
$v_K^p \in H_1^p(K)$.
In either case, $v_K^p$ is called the \emph{canonical $p$-henselian valuation}.
Note that if $K\neq K(p)$ admits a non-trivial $p$-henselian valuation, then $v_K^p$
is also non-trivial.

We get the following variant of Lemma \ref{Lemhen}:
\begin{Lemma} \label{Lemphen}
Let $K \subset L$ be an extension of fields such that $K$ is $p$-closed in $L$, i.e.\ $K(p)\cap L=K$.
Let $w$ be a $p$-henselian valuation on $L$. Then
the following holds:
\begin{enumerate}
\item The restriction $v$ of $w$ to $K$ is also $p$-henselian.
\item $Kv$ is $p$-closed in $Lw$, i.e.\ $Kv(p)\cap Lw=Kv$.
\item If $Lw=Lw(p)$, then $Kv=Kv(p)$.
\end{enumerate}
\end{Lemma}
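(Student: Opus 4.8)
The plan is to derive all three parts from Prestel's lifting criterion for $p$-henselianity (Proposition \ref{phenseq}), much as Lemma \ref{Lemhen} was derived from ordinary Hensel's lemma, with one additional input about $p$-henselian valued fields. The recurring elementary observation is that for any field extension $K\subseteq L$ one has $K(p)L\subseteq L(p)$: the extension $K(p)L/L$ is Galois, and restriction embeds $\mathrm{Gal}(K(p)L/L)$ into the pro-$p$ group $\mathrm{Gal}(K(p)/K)$, so $K(p)L/L$ is pro-$p$ and hence contained in $L(p)$. In particular $K(p)\subseteq L(p)$, and likewise $Kv(p)\subseteq Lw(p)$ since $Kv\subseteq Lw$.

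For (1) I would check condition (2) of Proposition \ref{phenseq} for $(K,v)$ directly. Given $f\in\mathcal{O}_v[X]$ splitting in $K(p)$ and $a\in\mathcal{O}_v$ with $\overline f(\overline a)=0$ and $\overline{f'}(\overline a)\neq 0$, note that $f\in\mathcal{O}_w[X]$ splits in $L(p)$ by the observation above, and $\overline a$ remains a simple residual zero of $\overline f$ over $Lw\supseteq Kv$; since $w$ is $p$-henselian there is $\alpha\in\mathcal{O}_w$ with $f(\alpha)=0$ and $\overline\alpha=\overline a$. As $f$ splits in $K(p)$, the zero $\alpha$ lies in $K(p)\cap L=K$, hence in $\mathcal{O}_v$, and $\overline\alpha=\overline a$ already holds in $Kv$. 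So $v$ is $p$-henselian.

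For (2) the inclusion $Kv\subseteq Kv(p)\cap Lw$ is trivial. For the converse, let $\overline a\in Kv(p)\cap Lw$ with minimal polynomial $\overline g\in Kv[X]$ over $Kv$; it is separable and splits in $Kv(p)$, say with splitting field the finite Galois $p$-extension $N$ of $Kv$. Since $v$ is $p$-henselian by (1), the extension $N/Kv$ lifts to a finite Galois $p$-extension $K_0/K$ inside $K(p)$ with $[K_0:K]=[N:Kv]$ and residue field $N$ — the standard lifting property of $p$-henselian valued fields, which I would cite from \cite{Koe}. Taking a lift $\alpha_0$ of $\overline a$ inside the subextension of $K_0/K$ corresponding to $Kv(\overline a)$, its minimal polynomial $g$ over $K$ is a monic $g\in\mathcal{O}_v[X]$ that splits in $K(p)$ and has $\overline g$ (separable, with simple zero $\overline a$) as its reduction. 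By the first paragraph $g$ splits in $L(p)$, so the $p$-henselianity of $w$ provides $\alpha\in L$ with $g(\alpha)=0$ and $\overline\alpha=\overline a$; since $g$ splits in $K(p)$, $\alpha\in K(p)\cap L=K$, whence $\overline a=\overline\alpha\in Kv$. Part (3) is then immediate: if $Lw=Lw(p)$ then $Kv(p)\subseteq Lw(p)=Lw$ by the first paragraph, so $Kv(p)=Kv(p)\cap Lw=Kv$ by (2).

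The point where the $p$-henselian case genuinely differs from the henselian one, and the main obstacle, is the lifting of finite Galois $p$-extensions of the residue field used in (2) — equivalently, the surjectivity of the reduction map $\mathrm{Gal}(K(p)/K)\to\mathrm{Gal}(Kv(p)/Kv)$ for $p$-henselian $(K,v)$ — which is needed precisely because Proposition \ref{phenseq}, unlike ordinary Hensel's lemma, only applies to polynomials splitting in $K(p)$. I would cite it from \cite{Koe}; a self-contained argument reduces, along a chief series of the $p$-group $\mathrm{Gal}(N/Kv)$, to lifting cyclic degree-$p$ extensions, where the Kummer, Artin--Schreier and $\zeta_p\notin K$ cases are handled separately using Proposition \ref{phenseq} for $(K,v)$.
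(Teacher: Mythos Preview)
Your argument is correct, and the overall strategy---lift polynomials splitting in $Kv(p)$ to polynomials splitting in $K(p)$, then use $p$-henselianity of $w$ and the hypothesis $K(p)\cap L=K$---is the same as the paper's. There are two differences in execution worth noting. For (1), the paper does not go through Proposition~\ref{phenseq} at all: it observes that $K(p)\cap L=K$ forces $K(p)$ and $L$ to be linearly disjoint over $K$, so any extension of $v$ to $K(p)$ is the restriction of the unique extension of $w$ to $K(p)L\subseteq L(p)$; this is shorter and uses the definition of $p$-henselianity directly. For (2), the paper reduces immediately to degree~$p$ (using that any proper intermediate field of $Kv(p)/Kv$ contains a degree-$p$ Galois subextension), and then cites \cite[4.2.6]{EP} for the lifting of a degree-$p$ polynomial---precisely the Kummer/Artin--Schreier step you isolate at the end. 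Your version lifts the full Galois $p$-extension in one go; this is a bit heavier to cite but avoids the group-theoretic reduction. Part~(3) is identical.
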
 

\begin{proof} 
\begin{enumerate}
\item The assumption $K(p)\cap L=K$ implies that $L$ and $K(p)$ are linearly disjoint over $K$,
so every extension of $v$ to $K(p)$ is the restriction of the unique extension of $w$ to $K(p)L\subseteq L(p)$.
\item Let $g\in Kv[X]$ be of degree $p$ that splits in $Kv(p)$ and has zero in $Lw$.
By \cite[4.2.6]{EP}, there is $f\in \mathcal{O}_v[X]$ monic of degree $p$ with $\bar{f}=g$ such that $f$ splits in $K(p)$.
Since $(L,w)$ is $p$-henselian, $f$ has a zero $a$ in $L$, which by $L\cap K(p)=K$ lies in $\mathcal{O}_v$,
so $g(\bar{a})=0$ and $g$ splits already in $Kv$.
Since every Galois extension of $p$-power degree contains a Galois extension of degree $p$,
this proves the claim.
\item This follows immediately from (2), as $Kv(p)\subseteq Lw(p)$.
\end{enumerate}
\end{proof}

\begin{Definition}
We say that $v_K^p$ is $\emptyset$-definable
{\em as such} if there is a parameter-free $\mathcal{L}_\textrm{ring}$-formula
$\phi_p(x)$ such that for all fields $L$ with $L \equiv K$,
we have $\phi_p(L)=\mathcal{O}_{v_L^p}$.
\end{Definition}

Unlike the canonical henselian valuation, the canonical $p$-henselian
valuation is usuall
y $\emptyset$-definable as such. 
Recall that a field $K$ is {\em Euclidean} if $[K(2):K]=2$.

\begin{Theorem}[{\cite[Main Theorem]{JK}}] \label{JKmain}
Fix a prime $p$. There exists a parameter-free 
$\mathcal{L}_\textrm{ring}$-formula $\phi_p(x)$
such that for any field $K$ with either $\mathrm{char}(K)=p$ or $\zeta_p \in K$
the following are equivalent:
\begin{enumerate}
\item $\phi_p$ defines ${v_K^p}$ as such.
\item $v_K^p$ is $\emptyset$-definable as such.
\item $p\neq 2$ or $Kv_K^p$ is not Euclidean.
\end{enumerate}
\end{Theorem}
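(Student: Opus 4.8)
The plan is to run the cycle $(1)\Rightarrow(2)\Rightarrow(3)\Rightarrow(1)$. The first implication is trivial: it only says that the particular formula $\phi_p$ is one witness to $\emptyset$-definability ``as such''. The content is in constructing $\phi_p$ (which gives $(3)\Rightarrow(1)$) and in the obstruction (which gives $(2)\Rightarrow(3)$). Throughout, the hypothesis $\mathrm{char}(K)=p$ or $\zeta_p\in K$ is what keeps the Galois theory of $K(p)$ tractable: for such $K$, Kummer theory (resp.\ Artin--Schreier--Witt theory in characteristic $p$) ties $\mathrm{Gal}(K(p)\mid K)$ to the $\mathbb{F}_p$-vector space $K^\times/(K^\times)^p$, and $p$-henselian valuations can be recovered from this datum. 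I will describe everything for $\zeta_p\in K$; the characteristic-$p$ case runs in parallel, with $x\mapsto x^p$ replaced by $x\mapsto x^p-x$.

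For $(3)\Rightarrow(1)$ I would build $\phi_p$ out of $p$-rigid elements. Recall that $a\in K^\times\setminus(K^\times)^p$ is $p$-rigid if $1+a(K^\times)^p\subseteq(K^\times)^p\cup a(K^\times)^p$, and that — modulo the well-known extra care needed at $p=2$ around the class of $-1$ — the $p$-rigid classes together with $(K^\times)^p$ form a subgroup of $K^\times/(K^\times)^p$ which, up to passing to a coarsening, is the image of $\mathcal{O}_v^\times$ for a $p$-henselian valuation $v$ on $K$, from which one can isolate the canonical $v_K^p$ (cf.\ \cite{Koe}). The key point is that ``$a$ is $p$-rigid'', and hence the whole associated valuation ring, is expressed by an $\mathcal{L}_{\mathrm{ring}}$-formula of bounded quantifier complexity, uniformly in $K$; taking $\phi_p$ to be this formula, with a fall-back clause defining the full field when no nontrivial $p$-henselian structure is visible, one checks with the dichotomy $H_1^p$ versus $H_2^p$ and Lemma \ref{Lemphen} that $\phi_p(K)=\mathcal{O}_{v_K^p}$ in every $K$ satisfying (3). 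One then has to see that the property in (3) is itself elementary — equivalently, for $p=2$, that ``$Kv_K^2$ is Euclidean'' is an elementary invariant of $K$ — so that $\phi_p$ defines $v_K^p$ genuinely \emph{as such}; this rests on detecting inside $K$ the very restricted (metabelian pro-$2$) shape that $\mathrm{Gal}(K(2)\mid K)$ is forced to have when the residue field is Euclidean. The genuinely delicate point is the $p=2$ bookkeeping: precisely when $Kv_K^2$ is Euclidean the rigid-elements recipe picks out a proper coarsening of $v_K^2$ rather than $v_K^2$ itself, which is why that case must be excluded.

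The same mechanism drives $(2)\Rightarrow(3)$, best proven contrapositively: if $p=2$ and $E:=Kv_K^2$ is Euclidean, then $v_K^2$ is not $\emptyset$-definable as such. For this I would exhibit two elementarily equivalent fields whose canonical $2$-henselian valuations are \emph{not} elementarily equivalent as valued fields — concretely, power series fields $E'((\Gamma_1))$ and $E'((\Gamma_2))$ over an appropriate $E'\equiv E$, with $\Gamma_1$ discretely and $\Gamma_2$ densely ordered but $\Gamma_1/n\Gamma_1\cong\Gamma_2/n\Gamma_2$ for every $n$ (so that all $n$-th power classes, and more, match). Then the two valued fields $(E'((\Gamma_i)),v^2)$ disagree on the valued-field sentence ``the value group is discrete'', whereas the pure fields $E'((\Gamma_1))$ and $E'((\Gamma_2))$ are elementarily equivalent; hence no $\mathcal{L}_{\mathrm{ring}}$-formula can carve out $\mathcal{O}_{v^2}$ in both. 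The Euclidean hypothesis enters exactly in the elementary equivalence of the two pure fields: over a residue field whose square-class group is as small as possible, an Ax--Kochen--Ershov-type analysis of the canonical $2$-henselian valuation (residue field $E'$ in both cases) shows that discreteness of the value group is invisible to ring sentences — while over a non-Euclidean residue field it is not, consistently with $(3)\Rightarrow(1)$.

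The main obstacle is twofold. Constructively, it is the $p=2$ part of the rigid-elements analysis: confirming that the formula recovers the \emph{canonical} $2$-henselian valuation (not merely some $2$-henselian coarsening) in all non-Euclidean-residue cases, and that ``$Kv_K^2$ is Euclidean'' really is elementary. On the obstruction side, the hard part is producing and verifying the elementarily equivalent pair with differing canonical $2$-henselian valuations, which forces one to understand the model theory of power series fields over Euclidean fields \emph{as pure fields} — i.e.\ to pin down exactly which valuation-theoretic invariants survive once the valuation is forgotten.
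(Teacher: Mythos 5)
This theorem is imported wholesale from Jahnke--Koenigsmann, as the bracketed attribution \cite[Main Theorem]{JK} indicates; the present paper gives no proof, so there is no internal argument to compare yours against. That said, your outline is in the spirit of that source and of Koenigsmann's earlier work: the defining formula $\phi_p$ is built from rigidity/$p$-th-power conditions on $K^\times$, the hypothesis $\mathrm{char}(K)=p$ or $\zeta_p\in K$ is there to make Kummer/Artin--Schreier theory applicable, and the $p=2$ Euclidean case is excluded because the recipe then detects a proper coarsening of $v_K^2$ rather than $v_K^2$ itself (that coarsening is exactly the $v_K^{2*}$ of Propositions \ref{2def} and \ref{euc}). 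The elementarity of ``$Kv_K^2$ Euclidean'' that you flag as needing care is also recorded in the paper, again by citation to \cite[Observation 2.3(b)]{JK}, so that worry is settled by the source rather than something you would have to re-derive.

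The concrete gap is in your $(2)\Rightarrow(3)$ construction. You want $E'((\Gamma_1))\equiv E'((\Gamma_2))$ as pure rings with $\Gamma_1$ discrete and $\Gamma_2$ dense. Taking the natural candidate $\Gamma_1=\mathbb{Z}$, $\Gamma_2=\mathbb{Q}\oplus\mathbb{Z}$, one has $E'((\Gamma_2))\cong E'((\mathbb{Q}))((\mathbb{Z}))$, so the Ax--Kochen/Ershov reduction you appeal to bottoms out at the requirement $E'\equiv E'((\mathbb{Q}))$. This holds when $E'$ is real closed (both sides are then real closed of characteristic zero), but it fails for a Euclidean $E'$ that is not $t$-henselian: $E'((\mathbb{Q}))$ is henselian, hence not elementarily equivalent to such an $E'$. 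Since ``Euclidean'' is strictly weaker than ``real closed'' and does not imply $t$-henselianity, the argument as written only handles a proper subclass of the fields covered by $(2)\Rightarrow(3)$. To close the gap you would either have to produce, for every $K$ with Euclidean residue $Kv_K^2$, a different pair $L_1\equiv L_2\equiv K$ with inequivalent canonical $2$-henselian valued-field structures, or prove the much stronger (and false in general) claim that $E'\equiv E'((\mathbb{Q}))$ for all Euclidean $E'$.
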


We can now prove the $p$-henselian analogues of Observation \ref{Obshen}.
\begin{Proposition} Let $p$ be a prime. \label{p1}
Consider the elementary class of valued fields
$$
\mathcal K := \{ (K,v) \mid 
v=v_K^p \in H_1^p(K), \zeta_p \in K 
\textrm{ if char}(K)\neq p, Kv\mbox{ not Euclidean if }p=2\}$$
Then $v_K^p$ is uniformly $\emptyset$-$\exists\forall$-definable for all
$K$ with $(K,v_K^p) \in \mathcal K$.
\end{Proposition}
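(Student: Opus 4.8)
The plan is to deduce the Proposition from Prestel's criterion (Theorem~\ref{P}) in its $\exists\forall$-form, running the $p$-henselian analogue of the argument in the proof of Observation~\ref{Obshen}(1). The first point to settle is that $\mathcal{K}$ really is an elementary class, so that Theorem~\ref{P} applies. By Theorem~\ref{JKmain}, for every $(K,v)\in\mathcal{K}$ the parameter-free formula $\phi_p$ defines $\mathcal{O}_{v_K^p}=\mathcal{O}_v$, and the remaining defining conditions of $\mathcal{K}$ --- that $v\in H_1^p(K)$, that $\zeta_p\in K$ when $\mathrm{char}(K)\neq p$, and that $Kv$ is not Euclidean when $p=2$ --- are all first-order once the valuation ring is available as a predicate. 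Thus there is an axiom system $\Sigma$ in $\mathcal{L}_{\rm ring}\cup\{\mathcal{O}\}$ whose models are exactly the pairs $(K,\mathcal{O}_{v_K^p})$ with $(K,v_K^p)\in\mathcal{K}$, and by Theorem~\ref{P} it suffices to show: whenever $(K_1,\mathcal{O}_{v_{K_1}^p})$ and $(K_2,\mathcal{O}_{v_{K_2}^p})$ are models of $\Sigma$ with $K_1\prec_\exists K_2$, then $\mathcal{O}_{v_{K_1}^p}\subseteq\mathcal{O}_{v_{K_2}^p}$.

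To prove this inclusion I would argue as in Observation~\ref{Obshen}(1). Since $K_1\prec_\exists K_2$, the field $K_1$ is relatively algebraically closed in $K_2$; as $K_1(p)\subseteq K_1^{\rm sep}$ consists of elements algebraic over $K_1$, this yields $K_1(p)\cap K_2=K_1$, so $K_1$ is $p$-closed in $K_2$ in the sense of Lemma~\ref{Lemphen}. By Lemma~\ref{Lemphen}(1), the restriction $w$ of $v_{K_2}^p$ to $K_1$ is a $p$-henselian valuation on $K_1$. Since $(K_1,v_{K_1}^p)\in\mathcal{K}$ forces $v_{K_1}^p\in H_1^p(K_1)$, the definition of the canonical $p$-henselian valuation gives $H_2^p(K_1)=\emptyset$, and hence every $p$-henselian valuation on $K_1$ --- in particular $w$ --- lies in $H_1^p(K_1)$. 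As $v_{K_1}^p$ is the finest element of $H_1^p(K_1)$, we get $\mathcal{O}_{v_{K_1}^p}\subseteq\mathcal{O}_w=\mathcal{O}_{v_{K_2}^p}\cap K_1\subseteq\mathcal{O}_{v_{K_2}^p}$. Theorem~\ref{P} then yields the uniform $\emptyset$-$\exists\forall$-definition.

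I expect the only genuine subtlety to be the first step --- checking that $\mathcal{K}$ is elementary --- which is precisely where Theorem~\ref{JKmain}, rather than the purely structural facts about $p$-henselian valuations, is needed: without the hypotheses ``$\zeta_p\in K$'' and ``$Kv$ not Euclidean if $p=2$'' there is no parameter-free formula singling out $\mathcal{O}_{v_K^p}$, and the class could fail to be axiomatizable. Once elementarity is in hand, the remainder is a routine transcription of Observation~\ref{Obshen}(1) to the $p$-henselian setting, with Lemma~\ref{Lemphen} in place of Lemma~\ref{Lemhen}.
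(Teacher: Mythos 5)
Your proof is correct and follows essentially the same route as the paper: appeal to Theorem~\ref{JKmain} for elementarity of $\mathcal{K}$, then apply Lemma~\ref{Lemphen} to the restriction $w$ of $v_{K_2}^p$ to $K_1$, observe that $v_{K_1}^p\in H_1^p(K_1)$ makes it the finest $p$-henselian valuation on $K_1$ so that $\mathcal{O}_{v_{K_1}^p}\subseteq\mathcal{O}_w\subseteq\mathcal{O}_{v_{K_2}^p}$, and conclude by Theorem~\ref{P}. You spell out a couple of steps the paper leaves implicit (why $K_1$ is $p$-closed in $K_2$, and why $H_2^p(K_1)=\emptyset$ forces $w$ into $H_1^p(K_1)$), but the substance is identical.
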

\begin{proof}
Note that $\mathcal{K}$ is elementary by Theorem \ref{JKmain}.
Take $(L,v_L^p),(M,v_M^p) \in \mathcal{K}$ such that $L \prec_\exists M$. 
By Lemma \ref{Lemphen}, the restriction 
$w$ of $v_M^p$ to $L$ is $p$-henselian.
As $v_L^p \in H_1^p(L)$, $v_L^p$ is the finest $p$-henselian
valuation on $L$, and thus we get 
$\mathcal{O}_{v_L^p} \subseteq \mathcal{O}_w \subseteq
\mathcal{O}_{v_M^p}$ and hence uniform 
$\emptyset$-$\exists\forall$-definability by Theorem \ref{P}.
\end{proof}

\begin{Proposition}\label{p2} Let $p$ be a prime.
Consider the elementary class of valued fields
$$
 \mathcal{K} := \{(K,v) \mid v=v_K^p \in H_2^p(K)\mbox{ and } \zeta_p\in K\mbox{ if }{\rm char}(K)\neq p\}.
$$
Then $v_K^p$ is uniformly $\emptyset$-$\forall\exists$-definable 
for all $K$ with $(K,v_K^p)\in \mathcal K$.
\end{Proposition}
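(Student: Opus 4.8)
The plan is to run the exact dual of the argument for Proposition \ref{p1}: where that proof used that $v_L^p$ is the \emph{finest} $p$-henselian valuation on $L$ when $v_L^p \in H_1^p(L)$, I would use that $v_L^p$ is the \emph{coarsest} element of $H_2^p(L)$ when $v_L^p \in H_2^p(L)$. The one genuinely new ingredient is part (3) of Lemma \ref{Lemphen}: when passing to an existentially closed subfield, I need to know that the restricted valuation still lands in $H_2^p$, and this is precisely what Lemma \ref{Lemphen}(3) provides.

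First I would check that $\mathcal{K}$ is an elementary class, so that Theorem \ref{P} is applicable. This works as in Proposition \ref{p1} via Theorem \ref{JKmain}, and here the Euclidean clause of that theorem causes no trouble: if $v = v_K^p \in H_2^p(K)$ then $Kv = Kv(p)$ by definition, and when $p = 2$ this says $Kv = Kv(2)$, so $Kv$ is not Euclidean. Hence Theorem \ref{JKmain} provides a single parameter-free formula $\phi_p$ defining $v_K^p$ as such throughout $\mathcal{K}$, and the additional requirement $Kv_K^p = Kv_K^p(p)$ is a first-order condition on the $\phi_p$-definable residue field; so $\mathcal{K}$ is axiomatizable.

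Next, I would take $(L, v_L^p), (M, v_M^p) \in \mathcal{K}$ with $L \prec_\exists M$; by Theorem \ref{P} it suffices to show $\mathcal{O}_{v_M^p} \cap L \subseteq \mathcal{O}_{v_L^p}$. Since $L$ is existentially closed in $M$ it is relatively algebraically closed in $M$, in particular $p$-closed in $M$, i.e.\ $L(p) \cap M = L$. Let $w$ be the restriction of $v_M^p$ to $L$. By Lemma \ref{Lemphen}(1), $w$ is $p$-henselian on $L$; and since $v_M^p \in H_2^p(M)$ gives $M v_M^p = M v_M^p(p)$, Lemma \ref{Lemphen}(3) yields $Lw = Lw(p)$, so $w \in H_2^p(L)$. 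As $v_L^p \in H_2^p(L)$, the set $H_2^p(L)$ is non-empty, so by the structure theory recalled in this section $v_L^p$ is its unique coarsest element; therefore $\mathcal{O}_w \subseteq \mathcal{O}_{v_L^p}$. Since $\mathcal{O}_w = \mathcal{O}_{v_M^p} \cap L$, this is the inclusion we wanted, and Theorem \ref{P} delivers uniform $\emptyset$-$\forall\exists$-definability.

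I do not expect a serious obstacle: once Lemma \ref{Lemphen} is in place the argument is purely formal, and the only subtlety is remembering to invoke part (3) of that lemma (rather than just part (1)) in order to stay inside $H_2^p$ after restricting to $L$.
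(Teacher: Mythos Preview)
Your proof is correct and follows essentially the same route as the paper's: both verify that $\mathcal{K}$ is elementary via Theorem \ref{JKmain}, restrict $v_M^p$ to $L$, invoke Lemma \ref{Lemphen} (parts (1) and (3)) to conclude that the restriction lies in $H_2^p(L)$, and then use that $v_L^p$ is the coarsest element of $H_2^p(L)$ to obtain the inclusion required by Theorem \ref{P}. Your additional remark that the Euclidean exception in Theorem \ref{JKmain} is automatically avoided when $v_K^p\in H_2^p(K)$ is a helpful detail the paper leaves implicit.
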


\begin{proof}
Note that $\mathcal{K}$ is elementary by Theorem \ref{JKmain}.
Take $(L,v_L^p),(M,v_M^p) \in \mathcal{K}$ such that $L \prec_\exists M$. 
Using Lemma \ref{Lemphen} again, the restriction $w$ of $v_M^p$ to $L$
is $p$-henselian and we have $Lw = Lw(p)$, so $w\in H_2^p(L)$
and therefore $\mathcal{O}_w\subseteq\mathcal{O}_{v_L}$.
Thus, we get uniform
$\emptyset$-$\forall\exists$-definability by Theorem \ref{P}.
\end{proof}

Theorem \ref{JKmain} includes an exception in case $p=2$ and 
$Kv_K^2$ is Euclidean. However, in this case some coarsening of $v_K^2$
is nonetheless
$\emptyset$-definable:

\begin{Proposition}[{\cite[Observation 2.3]{JK}}] \label{2def}
Let $K \neq K(2)$, and assume that $Kv_K^2$ is Euclidean.
Then the coarsest $2$-henselian valuation $v_K^{2*}$ on $K$ 
which has Euclidean
residue field is $\emptyset$-definable.
\end{Proposition}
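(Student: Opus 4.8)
The plan is to deduce this from Theorem~\ref{JKmain}, applied not to $K$ itself but to the quadratic extension $L:=K(\sqrt{-1})$. First, $K$ is formally real: $E:=Kv_K^2$ is Euclidean, hence formally real, and if $-1$ were a sum of squares in $K$ then scaling the relation by an appropriate square and reducing modulo $\mathfrak{m}_{v_K^2}$ would contradict formal reality of $E$. In particular $\mathrm{char}(K)=0$ and $-1\notin K^{\times2}$, so $X^2+1$ is irreducible over $K$, the field $L=K[X]/(X^2+1)$ is a proper quadratic extension of $K$, and $L$ together with the inclusion $K\hookrightarrow L$ is interpreted in $K$ without parameters, an element of $L$ being coded by the pair of its coordinates over $K$. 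Also, since $Kv_K^2$ is Euclidean and hence not quadratically closed, $v_K^2\in H_1^2(K)$ is the finest $2$-henselian valuation on $K$, so $v_K^{2*}$ is a coarsening of it. Granting all this, it suffices to prove (A) that the restriction of $v_L^2$ to $K$ is $v_K^{2*}$, and (B) that $v_L^2$ is $\emptyset$-definable as such on $L$: for then pulling the formula $\phi_2$ of Theorem~\ref{JKmain} back along the interpretation and substituting the code of $K$ produces a parameter-free $\mathcal{L}_{\rm ring}$-formula defining $\mathcal{O}_{v_L^2}\cap K=\mathcal{O}_{v_K^{2*}}$ in $K$; and since $\phi_2$ and the interpretation are fixed, this even gives uniform $\emptyset$-definability over all such $K$.

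Claim (B) is immediate: since $\sqrt{-1}\in L$ we have $-1\in L^{\times2}$, so $-1$ is a square in $Lv_L^2$, whence $Lv_L^2$ is not formally real and a fortiori not Euclidean; as $\zeta_2=-1\in L$, Theorem~\ref{JKmain} then gives that $\phi_2$ defines $v_L^2$ as such on $L$.

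For claim (A), note $L=K(\sqrt{-1})\subseteq K(2)$, so $v_K^2$ has a unique extension $w$ to $L$; since $L(2)=K(2)$ and $v_K^2$ extends uniquely to $K(2)$, so too does $w$, and therefore $w$ is again $2$-henselian on $L$. Its residue field $Lw$ contains a square root of $-1$, which does not lie in $E$ as $E$ is formally real, so $[Lw:E]=2=[L:K]$; thus $L/K$ is unramified at $w$, $wL=v_K^2K$, and $Lw=E(\sqrt{-1})$ --- which, $E$ being Euclidean, is quadratically closed and equal to $E(2)$. Consequently $w$ has quadratically closed residue field, so $v_L^2$, the coarsest $2$-henselian valuation on $L$ with quadratically closed residue field, is a coarsening of $w$; and if $\Delta^*$ denotes the convex subgroup of $v_K^2K$ corresponding to the coarsening $v_K^{2*}$ of $v_K^2$, then the coarsening of $w$ by $\Delta^*$ is $2$-henselian, restricts to $v_K^{2*}$ on $K$, and has the quadratically closed residue field $(Kv_K^{2*})(\sqrt{-1})$, hence is, by uniqueness of the extension, the extension of $v_K^{2*}$ to $L$. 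Invoking now the $p$-henselian analogues (see \cite{Koe}) of the usual facts --- coarsenings of $p$-henselian valuations are $p$-henselian, the induced valuation on the residue field of such a coarsening is $p$-henselian, and $F^\times/(F^\times)^2$ of a $2$-henselian valued field $(F,u)$ of residue characteristic $0$ is an extension of $uF/2uF$ by $(Fu)^\times/((Fu)^\times)^2$ --- one checks that a coarsening of $w$ (resp.\ of $v_K^2$) has quadratically closed (resp.\ Euclidean) residue field precisely when the corresponding convex subgroup of $v_K^2K$ is $2$-divisible, using $E(2)=E(\sqrt{-1})$ in the first case and that $E$ is Euclidean in the second. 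Hence both $v_L^2$ and the extension of $v_K^{2*}$ are the coarsening of $w$ by the largest $2$-divisible convex subgroup of $v_K^2K$, and so have the same value group over $K$; since a coarsening of $v_K^2$ is determined by its value group, $v_L^2|_K=v_K^{2*}$. (When $K$ is itself Euclidean --- equivalently $L=K(2)$ --- the argument degenerates harmlessly: then $v_K^2K$ is $2$-divisible, the largest $2$-divisible convex subgroup is all of it, and both $v_L^2$ and $v_K^{2*}$ come out trivial.)

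The substantive step, and the one I expect to require the most care, is (A): matching both $v_L^2$ and the extension of $v_K^{2*}$ with the coarsening of $w$ by the largest $2$-divisible convex subgroup of $v_K^2K$. One has to keep track of the nested valuations $v_K^2$, its extension $w$ to $L$, and their coarsenings, and --- the crucial point --- to cite the structure theory of compositions of valuations and of quadratic square classes in its $p$-henselian rather than its henselian form, since $v_K^2$ and $v_L^2$ need only be $2$-henselian, not henselian.
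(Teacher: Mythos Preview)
The paper does not prove this proposition; it is quoted from \cite[Observation~2.3]{JK} without argument, so there is no in-paper proof to compare against.

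Your proof is correct. The passage to $L=K(\sqrt{-1})$ lands you in the non-Euclidean case of Theorem~\ref{JKmain}, and the parameter-free interpretation of $L$ in $K$ transfers definability back to $K$. The substantive step, your claim~(A), goes through as you outline: the square-class exact sequence you invoke does hold for $2$-henselian valuations of residue characteristic $\neq2$ (only Hensel's lemma for quadratic polynomials is needed), and from it the characterization of Euclidean, respectively quadratically closed, residue field of a coarsening in terms of $2$-divisibility of the corresponding convex subgroup follows directly, so both $v_L^2$ and the extension of $v_K^{2*}$ to $L$ coincide with the coarsening of $w$ by the largest $2$-divisible convex subgroup of $v_K^2K$. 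Your argument also yields the \emph{uniform} $\emptyset$-definability that the paper actually uses later, in the proof of Proposition~\ref{euc}.

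Since \cite{JK} is precisely the source of Theorem~\ref{JKmain}, reducing to that theorem via $K(\sqrt{-1})$ is very likely the intended route there as well, but that cannot be confirmed from the present paper alone.
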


Again, this definition can be found to be of type $\forall\exists$:
\begin{Proposition} \label{euc}
Consider the elementary class of valued fields
$$
 \mathcal K := \{ (K,v) \mid Kv_K^2 \textrm{ is Euclidean and }v=v_K^{2*}\}
$$
Then $v_K^{2*}$ is uniformly $\emptyset$-$\forall\exists$-definable for all
$K$ with $(K,v_K^{2*}) \in \mathcal K$.
\end{Proposition}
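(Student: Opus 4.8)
The plan is to argue exactly as in the proofs of Propositions~\ref{p1} and \ref{p2}, via Prestel's criterion (Theorem~\ref{P}). First I would note that $\mathcal{K}$ is an elementary class: by Proposition~\ref{2def} the valuation $v_K^{2*}$ is $\emptyset$-definable, and "$Kv_K^2$ is Euclidean" is a first-order condition by Theorem~\ref{JKmain}, so $\mathcal{K}$ is axiomatised by a suitable $\Sigma$ in $\mathcal{L}_{\rm ring}\cup\{\mathcal{O}\}$. Since for $(K,v)\in\mathcal{K}$ the residue field $Kv_K^{2*}$ is Euclidean by the very definition of $v_K^{2*}$, it then suffices by Theorem~\ref{P} to verify the $\forall\exists$-condition: whenever $(L,v_L^{2*}),(M,v_M^{2*})\in\mathcal{K}$ and $L\prec_\exists M$, then $\mathcal{O}_{v_M^{2*}}\cap L\subseteq\mathcal{O}_{v_L^{2*}}$.

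So fix such $L\prec_\exists M$. As usual, $L$ is relatively algebraically closed in $M$, hence $2$-closed in $M$, so Lemma~\ref{Lemphen} applies to the restriction $w:=v_M^{2*}|_L$: it is $2$-henselian on $L$, and $Lw$ is $2$-closed in $Mv_M^{2*}$. Moreover $Mv_M^{2*}$ is Euclidean. The crux is then the following field-theoretic fact, which plays here the role that Lemma~\ref{Lemphen}(3) plays for Propositions~\ref{p1} and \ref{p2}: \emph{a subfield $E'$ of a Euclidean field $E$ with $E'(2)\cap E=E'$ is again Euclidean}. Granting this, $Lw$ is Euclidean, so $w$ is a $2$-henselian valuation on $L$ with Euclidean residue field; since $v_L^{2*}$ is by definition the \emph{coarsest} such valuation, $\mathcal{O}_w\subseteq\mathcal{O}_{v_L^{2*}}$, i.e.\ $\mathcal{O}_{v_M^{2*}}\cap L=\mathcal{O}_w\subseteq\mathcal{O}_{v_L^{2*}}$, as required. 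This also subsumes the degenerate cases in which $w$ or $v_L^{2*}$ is trivial.

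It remains to prove the field-theoretic fact, which I expect to be the only genuine work. Since $E$ is Euclidean it is formally real, so $-1$ is not a sum of squares in $E$, hence not in $E'$; thus $E'$ is formally real, and in particular $\sqrt{-1}\notin E'$. Next, for $d\in E'^\times$ the field $E(\sqrt d)$ is either $E$ or a quadratic extension of $E$, which, $E$ being Euclidean with unique quadratic extension $E(\sqrt{-1})$, must equal $E(\sqrt{-1})$; so $d$ or $-d$ is a square in $E$. Using that $E'$ is $2$-closed in $E$ (no proper quadratic subextension of $E'(2)/E'$ embeds into $E$ over $E'$), a short argument in the group $E'^\times/(E'^\times)^2$ shows that a non-square $d\in E'^\times$ cannot be a square in $E$, whence $-d\in(E^\times)^2$; comparing two non-squares of $E'$ then forces $[E'^\times:(E'^\times)^2]=2$. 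Together with formal reality this makes $E'$ a Pythagorean field with exactly two square classes, i.e.\ Euclidean, and one checks directly that $E'(\sqrt{-1})$ is quadratically closed, so $[E'(2):E']=2$. Alternatively this may be quoted as the $2$-analogue of the classical fact that a relatively algebraically closed subfield of a real closed field is real closed. I do not foresee difficulties beyond assembling this lemma cleanly.
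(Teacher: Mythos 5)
Your overall structure matches the paper's: verify the $\forall\exists$-criterion of Theorem~\ref{P} by showing that for $L\prec_\exists M$ the restriction $w:=v_M^{2*}|_L$ is a $2$-henselian valuation with Euclidean residue field, hence a refinement of $v_L^{2*}$ by definition of $v_L^{2*}$ as the coarsest such. The difference lies in how you deduce that $Lw$ is Euclidean. The paper does this in two short steps using Lemma~\ref{Lemphen}(2) more directly: from $Lw(2)\cap Mv_M^{2*}=Lw$ one gets linear disjointness of $Lw(2)$ and $Mv_M^{2*}$ over $Lw$, hence $[Lw(2):Lw]\leq[Mv_M^{2*}(2):Mv_M^{2*}]=2$, and then $Lw\neq Lw(2)$ because $v_L^2\in H_1^2(L)$ forces $H_2^2(L)=\emptyset$; combined, $[Lw(2):Lw]=2$. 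You instead prove an independent field-theoretic lemma — a $2$-closed subfield of a Euclidean field is Euclidean — by working explicitly with square classes, formal reality, and Pythagorean-ness, and then verifying $[E'(2):E']=2$ via quadratic closedness of $E'(\sqrt{-1})$. Both are correct; the paper's argument is shorter and recycles the degree/linear-disjointness mechanism used elsewhere in the paper, while yours is more elementary and self-contained (it even avoids appealing to the tree structure $H_1^2/H_2^2$ to see $Lw\neq Lw(2)$, since formal reality of $Lw$ already gives that). One small gloss: to conclude that $\mathcal{K}$ is elementary you need $v_K^{2*}$ to be \emph{uniformly} $\emptyset$-definable across $\{K:Kv_K^2\text{ Euclidean}\}$, not just $\emptyset$-definable field-by-field; Proposition~\ref{2def} as stated gives only the latter, and the paper explicitly invokes the proof of \cite[Observation 2.3(a)]{JK} for the uniform version. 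You should make the same appeal.
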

\begin{proof}
The class of fields
$$\mathcal{K}_0:= \{ K \mid Kv_K^2 \textrm{ Euclidean} \}$$ 
is elementary by \cite[Observation 2.3(b)]{JK}.
Furthermore, the proof of \cite[Observation 2.3(a)]{JK} shows that
$v_K^{2*}$ is uniformly $\emptyset$-definable in any $K \in \mathcal{K}_0$.
Thus, $\mathcal{K}$ is an elementary class of valued fields.
 
The rest of the proof is similar to the one of Proposition \ref{p2}:
Take $(L,v_L^{2*}), (M,v_M^{2*}) \in \mathcal{K}$ such that 
$L \prec_\exists M$. 
Using Lemma \ref{Lemphen}, the restriction $w$ of $v_M^{2*}$ to $L$
is $2$-henselian and we have 
$Lw(2)\cap Mv_M^{2*}=Lw$.
This implies that $[Lw(2):Lw]\leq[Mv_M^{2*}(2):Mv_M^{2*}]=2$, as $Mv_M^{2*}$ is Euclidean.
Therefore, since $v_L^2\in H_1^2(L)$ implies that $Lw\neq Lw(2)$, we conclude that $[Lw(2):Lw]=2$, 
so $Lw$ is Euclidean.
In particular, $w$ is a refinement of $v_L^{2*}$. 
Thus, Theorem \ref{P} implies 
uniform $\emptyset$-$\forall\exists$-definability.  
\end{proof}

In general, $v_K^p$ need not be \emph{simultaneously} 
$\forall\exists$- and $\exists\forall$-definable without parameters:
\begin{Example} \label{Ex:H2}
Fix a prime $p$. 
We construct a field $K$ with $v_K^p \in H_2^p(K)$ such that $v_K^p$ is $\emptyset$-$\forall\exists$-definable as such but not $\emptyset$-$\exists\forall$-definable.

Consider the field $K_0=\mathbb{C}((\mathbb{Q}))$ and 
let $H=\mathbb{Z}\oplus\mathbb{Q}$ 
(recall that the direct sum is ordered {\em inverse} lexicographically).
In particular, $H$ is discrete and there is an embedding of ordered groups $\mathbb{Q}\rightarrow H$.
Let $D$ be the divisible hull of $H$. 
Note that the theory of divisible ordered abelian groups is
 model complete (see \cite[3.1.17]{Mar}). 
So, as $D$ contains $\mathbb{Q}$, we have $\mathbb{Q} \prec D$ in the language 
of ordered groups. This implies in particular 
$\mathbb{Q} \prec_\exists H$ (as ordered
abelian groups).
Take a copy $H_i$ of $H$ for each $i\geq0$ and let
$$
 \Gamma=H_1\oplus H_2\oplus\dots,
$$
again with inverse lexicographic order.
Now \cite[Corollary 1.7]{We} yields that
$$
 G_1:=\mathbb{Q}\oplus \Gamma
\prec_\exists H_0\oplus\Gamma =:G_2$$
as ordered abelian groups.
Consider the fields $K_1=\mathbb{C}((G_1))\cong K_0((\Gamma))$ and  $K_2=\mathbb{C}((G_2))$. 
For $i=1,2$, let $v_i$ denote the henselian valuation on $K_i$ with
value group $G_i$ and residue field $\mathbb{C}$,
and let $w$ denote the henselian valuation on $K_1$ with value group $\Gamma$ and residue field $K_0$.
Then the Ax-Kochen/Ersov-Theorem (see \cite[4.6.4]{P2}) implies
$(K_1,w) \equiv (K_2,v_2)$ since $K_0 \equiv \mathbb{C}$
and $\Gamma \cong G_2$.
Moreover, $(K_1,v_1) \prec_\exists (K_2,v_2)$ by a well-known variant of
the Ax-Kochen/Ersov-Theorem (see p.\,183 in \cite{KP}),
thus we get in particular $K_1 \prec_\exists K_2$ in the language of rings. 
However,
$v_{K_1}^p=w$ (since $\Gamma$ is discrete) and $v_{K_2}^p=v_2$.
Hence, the restriction of $v_{K_2}^p$ to $K_1$ is a 
proper refinement of $v_{K_1}^p$. Thus, the canonical 
$p$-henselian valuation on $K_1$ is not $\emptyset$-$\exists\forall$-definable 
by Theorem \ref{P}, although it is $\emptyset$-$\forall\exists$-definable as such by Proposition \ref{p2}. 
Note that in fact $v_K^p$ is henselian, so this also gives
an example of an $\emptyset$-$\forall\exists$-definable henselian
valuation which is not $\emptyset$-$\exists\forall$-definable
(cf.~Example \ref{Ex:H22}).
\end{Example}

Conversely, we give an example of a canonical $p$-henselian valuation which is
$\emptyset$-$\exists\forall$-definable 
but not $\emptyset$-$\forall\exists$-definable: 
\begin{Example} \label{Ex:H1p} Fix any prime $p$.
We construct a field $K$ with $v_K^p \in H_1^p(K)$ such that $v_K^p$ is
$\emptyset$-$\exists\forall$-definable as such but not $\emptyset$-$\forall\exists$-definable.

We first construct a field $k$ with $k \cong k((\mathbb{Q}))(X)$ containing
a primitive $p$th root of unity $\zeta_p$:  
For $i \geq 1$, 
let
$$
 k_{i+1} := \mathbb{C}((t_{i+1}^\mathbb{Q}))(X_{i+1})((t_{i}^\mathbb{Q}))(X_{i}) \dots((t_{1}^\mathbb{Q}))(X_{1}).
$$ 
and 
$$
 k := \bigcup\limits_{i\geq 1} k_i.
$$
Then $k \cong k((t_0^\mathbb{Q}))(X_0)$ by mapping 
$$
 X_i\mapsto X_{i-1} \textrm{ and } t_i \mapsto t_{i-1} \textrm{ for } i>0.
$$
Take $L_1 := k((u^\mathbb{Q}))((v^\mathbb{Q}))$ and 
$L_2:=k((u^\mathbb{Q}))(X)((v^\mathbb{Q}))$. 
Denote by $v_1$ the power series valuation on $L_1$ with value group 
$\mathbb{Q} \oplus \mathbb{Q}$ and residue field $k$,
and by $v_2$ the power series valuation on $L_2$ with value group $\mathbb{Q}$ and residue field $k((u^\mathbb{Q}))(X)$.
Then, by Ax-Kochen/Ersov 
(\cite[4.6.4]{P2}), we have
$$
 (L_1,v_1) \equiv (L_2,v_2)
$$ since $k \equiv k((u^\mathbb{Q}))(X)$ holds by construction and
since $\mathbb{Q}\oplus\mathbb{Q}$ is divisible and thus elementarily equivalent to $\mathbb{Q}$.
Furthermore, we have
$$L_1 \prec_\exists L_2$$
by a well-known Ax-Kochen/Ersov variant (see p.\,183 in \cite{KP}) 
since we have $\mathbb{Q} \prec_\exists \mathbb{Q}$ and
$$
 k((u^\mathbb{Q})) \prec_\exists k((u^\mathbb{Q}))(X),
$$
as every purely transcendental extension of a field can be embedded into any sufficiently large elementary extension.
Since $k$ is by construction hilbertian (see \cite[13.2.1]{FJ}), 
it is not $p$-henselian, not 
Euclidean and admits Galois extensions of degree $p$ 
(see \cite[Lemma 3.2]{JK0}).
Thus, $v_{L_1}^p=v_1$.
Furthermore, $v_{L_1}^p$ is $\emptyset$-definable 
as such by an $\exists\forall$-formula by Proposition
\ref{p1}.
On the other hand, 
$v_{L_2}^p=v_2$.
Thus, the restriction of $v_{L_2}^p$ to 
$L_1$ is a proper coarsening
of $v_{L_1}^p$ and so $v_{L_1}^p$ is not $\emptyset$-definable 
by an $\forall\exists$-formula
by Theorem \ref{P}. In fact, $v_{L_1}^p$ coincides with the canonical henselian
valuation, so this also gives rise to an example of a 
canonical henselian valuation which is not
$\emptyset$-$\forall\exists$-definable (cf.~Example \ref{Ex:H1}).
\end{Example}

\section{The case $v_K \in H_2(K)$}

Consider a field $K$ with
with canonical henselian valuation 
$v_K \in H_2(K)$ and $\mathrm{char}(Kv_K)=0$.
We now want to show that if $v_K$  
is $\emptyset$-definable on such a field, then it is already 
$\emptyset$-$\forall\exists$-definable.

\begin{Lemma} \label{C}
Let $\Gamma$ be any ordered abelian group. 
Consider the field $L=\mathbb{C}((\Gamma))$ and let $v$ denote the 
power series valuation on $L$.
Then no proper refinement of $v$ is $\mathbb{C}$-definable.
\end{Lemma}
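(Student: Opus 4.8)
The plan is to argue by contradiction. Suppose $w$ is a proper refinement of $v$ whose valuation ring is $\mathbb{C}$-definable, say $\mathcal{O}_w=\phi(L)$ where $\phi(x)$ is an $\mathcal{L}_{\rm ring}$-formula with parameters from $\mathbb{C}$. Since $Lv=\mathbb{C}$, a refinement of $v$ is exactly a valuation of the form $w=\bar w\circ v$ for a valuation $\bar w$ on $\mathbb{C}$, with $\mathcal{O}_w=\{x\in\mathcal{O}_v\mid \overline{x}\in\mathcal{O}_{\bar w}\}$, and it is a proper refinement precisely when $\bar w$ is non-trivial; in that case $\mathcal{O}_{\bar w}$ is an infinite, co-infinite subring of $\mathbb{C}$. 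The idea for the contradiction is to move $\mathcal{O}_w$ by a field automorphism fixing the parameters of $\phi$. One cannot do this inside $L$ itself: any automorphism of $L$ over $\mathbb{C}$ preserves $v$ as soon as $L$ is not algebraically closed, hence preserves $\mathcal{O}_w={\rm res}_v^{-1}(\mathcal{O}_{\bar w})$; and when $L$ is algebraically closed the claim is anyway immediate, since then every $\mathbb{C}$-definable subset of $L$ is finite or cofinite, while $\mathcal{O}_w$ is neither. So instead I would pass to a large elementary extension.

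Concretely, I would take a sufficiently saturated elementary extension $\mathbb{C}'\succ\mathbb{C}$, set $L'=\mathbb{C}'((\Gamma))$ with its power series valuation $v'$, and invoke the Ax--Kochen/Ershov theorem (we are in equicharacteristic zero, residue fields $\mathbb{C}\prec\mathbb{C}'$ and value groups $\Gamma=\Gamma$) to conclude $(L,v)\prec(L',v')$, hence in particular $L\prec L'$ as fields. Next I would transfer the ``shape'' of $\phi$: the statements ``$\phi(L)\subseteq\mathcal{O}_v$'', ``$\mathfrak{m}_v\subseteq\phi(L)$'', ``$\phi(L)$ is closed under addition of elements of $\mathfrak{m}_v$'', ``$\phi(L)$ is a subring'', ``every $v$-unit or its inverse lies in $\phi(L)$'', and ``some $v$-unit is not in $\phi(L)$'' are all first-order in the valued field $(L,v)$ in the parameters of $\phi$, and they all hold because $\mathcal{O}_w=\{x\in\mathcal{O}_v\mid\overline{x}\in\mathcal{O}_{\bar w}\}$ with $\bar w$ non-trivial. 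By elementarity they hold in $(L',v')$ as well, which forces $\phi(L')=\{x\in\mathcal{O}_{v'}\mid\overline{x}\in R'\}$ for a proper valuation ring $R'$ of $\mathbb{C}'=L'v'$, and (using absoluteness of $\phi$ on elements of $\mathbb{C}$) $R'\cap\mathbb{C}=\mathcal{O}_{\bar w}$.

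Finally I would produce the automorphism. As $R'$ is a valuation ring of the field $\mathbb{C}'$ we have $|R'|=|\mathbb{C}'|>|\mathbb{C}|$, so, $\mathbb{C}$ being algebraically closed, $R'$ contains an element $a$ transcendental over $\mathbb{C}$; choosing $0\neq d\in\mathfrak{m}_{\bar w}$ we get $da\in\mathfrak{m}_{R'}$, hence $b:=(da)^{-1}\notin R'$ and $b$ is again transcendental over $\mathbb{C}$. In ${\rm ACF}_0$ all elements transcendental over $\mathbb{C}$ realise the same type over $\mathbb{C}$, so by saturation there is $\sigma\in{\rm Aut}(\mathbb{C}'/\mathbb{C})$ with $\sigma(a)=b$, whence $\sigma(R')\neq R'$. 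Extending $\sigma$ coefficientwise gives an automorphism $\tilde\sigma$ of $L'=\mathbb{C}'((\Gamma))$ that fixes $\mathbb{C}$ (hence the parameters of $\phi$) pointwise, preserves $v'$, and induces $\sigma$ on the residue field; therefore $\tilde\sigma(\phi(L'))=\{x\in\mathcal{O}_{v'}\mid\overline{x}\in\sigma(R')\}\neq\phi(L')$, contradicting that $\tilde\sigma$ fixes the parameters. I expect the main obstacle to be the second step: recognising that the naive automorphism argument must be carried out in a saturated elementary extension rather than in $L$, and verifying via Ax--Kochen/Ershov that this extension is again a power series field over an elementarily equivalent residue field, so that the problem genuinely reduces to moving a valuation ring of the algebraically closed field $\mathbb{C}'$, which is trivial by strong minimality.
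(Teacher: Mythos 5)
Your argument is correct, but it takes a genuinely different route from the paper -- and your aside that ``one cannot do this inside $L$ itself'' is precisely what the paper refutes: you only considered automorphisms of $L$ fixing $\mathbb{C}$ pointwise, whereas only the finitely many parameters of $\phi$ need to be fixed. The paper's proof stays inside $L$: it chooses an algebraically closed subfield $K_0\subseteq\mathbb{C}$ of finite transcendence degree containing the parameters, notes that $w$ restricts to the nontrivial induced valuation $\bar w$ on $\mathbb{C}=Lv$, picks $a\in\mathfrak{m}_{\bar w}\setminus K_0$ (possible since $K_0$ is countable), uses that ${\rm Aut}(\mathbb{C}|K_0)$ acts transitively on $\mathbb{C}\setminus K_0$ to find $\sigma$ with $\sigma(a)=a^{-1}$, and extends $\sigma$ coefficientwise to an automorphism of $L$ which fixes the parameters but moves $a\in\mathcal{O}_w$ to $a^{-1}\notin\mathcal{O}_w$; no Ax--Kochen/Ershov and no saturated extension are needed. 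Your detour is nevertheless sound: the embedding form of Ax--Kochen/Ershov in equicharacteristic zero does give $(L,v)\prec(L',v')$ for $L'=\mathbb{C}'((\Gamma))$, your list of properties of $\phi$ is first order in the valued-field language over the parameters and does force $\phi(L')$ to be the pullback of a proper valuation ring $R'$ of $\mathbb{C}'$ with $R'\cap\mathbb{C}=\mathcal{O}_{\bar w}$, and the coefficientwise extension of $\sigma\in{\rm Aut}(\mathbb{C}'/\mathbb{C})$ with $\sigma(a)=b$ yields the contradiction; just note that producing $\sigma$ requires strong homogeneity of $\mathbb{C}'$ (or, more simply, a transcendence-degree argument extending $\mathbb{C}(a)\cong\mathbb{C}(b)$ to all of $\mathbb{C}'$), not saturation alone. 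What your version buys is independence from the uncountability of $\mathbb{C}$: since you fix all of $\mathbb{C}$ pointwise and create room in an elementary extension, the same proof works verbatim for $F((\Gamma))$ over any algebraically closed $F$ of characteristic zero, while the paper's step $\mathfrak{m}_{\bar w}\not\subseteq K_0$ exploits that $\mathbb{C}$ is uncountable. What the paper's version buys is brevity and economy of tools, which suffices for its sole application (Proposition \ref{Prop:H2}), where the situation has already been reduced to $\mathbb{C}((\Gamma))$ by Ax--Kochen/Ershov.
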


\begin{proof} 
Let $w$ be a proper refinement of $v$ and suppose that $\mathcal{O}_w=\phi(L)$ for some
formula $\phi$ with parameters from $\mathbb{C}$.
Let $K_0\subseteq\mathbb{C}$ be an algebraically closed field of finite transcendence degree that contains those parameters.
As $w$ refines $v$, $w$ induces a non-trivial valuation $\overline{w}$ 
on the residue field $Lv=\mathbb{C}$, 
and since the residue map $\mathcal{O}_v\rightarrow\mathbb{C}$ is the identity on $\mathbb{C}$,
the restriction of $w$ to $\mathbb{C}$ equals $\overline{w}$. 
Thus, since $K_0$ is a proper subfield of $\mathbb{C}$,
there is some $a\in\mathbb{C}\setminus K_0$ with $w(a)>0$.
As ${\rm Aut}(\mathbb{C}|K_0)$ acts transitively on $\mathbb{C}\setminus K_0$,
there is some $\sigma\in{\rm Aut}(\mathbb{C}|K_0)$ with $\sigma(a)=a^{-1}$.
We can extend $\sigma$ to an automorphism $\sigma'\in{\rm Aut}(L|K_0)$ by
setting
$$
 \sigma'\left(\sum_{\gamma\in\Gamma} a_\gamma t^\gamma\right)=\sum_{\gamma\in\Gamma}\sigma(a_\gamma)t^\gamma.
$$
Since $\sigma'$ fixes the parameters of $\phi$, $\sigma'(\mathcal{O}_w)=\mathcal{O}_w$,
contradicting $\sigma'(a)=a^{-1}\notin\mathcal{O}_w$.
\end{proof}

\begin{Proposition} \label{Prop:H2}
Let $K$ be a field with $v_K \in H_2(K)$ and $\mathrm{char}(Kv_K)=0$.
Then no proper refinement of $v_K$ is $\emptyset$-definable.
\end{Proposition}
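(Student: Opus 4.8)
The plan is to reduce Proposition \ref{Prop:H2} to Lemma \ref{C} by passing to a well-chosen elementary extension of $K$. Since $v_K \in H_2(K)$ means $Kv_K = (Kv_K)^{\rm sep}$, and $\mathrm{char}(Kv_K)=0$, the residue field $F := Kv_K$ is algebraically closed of characteristic zero; thus $F \equiv \mathbb{C}$ provided $F$ has the same (uncountable) cardinality, which we can arrange by taking a suitable elementary extension. More precisely, suppose toward a contradiction that some proper refinement $w$ of $v_K$ satisfies $\mathcal{O}_w = \phi(K)$ for a parameter-free $\mathcal{L}_{\rm ring}$-formula $\phi$. I would first pass to an $\aleph_1$-saturated (or sufficiently saturated) elementary extension $K^* \succ K$; the formula $\phi$ still defines a valuation ring $\mathcal{O}_{w^*}$ on $K^*$, and since ``$v_K$ is the canonical henselian valuation and lies in $H_2$'' and ``$w \subsetneq v_K$'' are expressible (the latter because some explicit element of $\mathcal{O}_{v_{K}}\setminus \mathcal O_{w}$ exists, and $\mathcal O_{v_K}$ is $\emptyset$-definable by hypothesis), we get that $w^*$ is still a proper refinement of $v_{K^*}$, and $K^*v_{K^*}$ is algebraically closed of characteristic zero.

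The key step is then an Ax–Kochen/Ershov-style argument: $v_{K^*} \in H_2(K^*)$ with residue field $F^*$ algebraically closed of characteristic $0$, so $(K^*, v_{K^*})$ is elementarily equivalent (indeed, by saturation, we can embed things appropriately) to a power series field $\mathbb{C}((\Gamma))$ with its power series valuation, where $\Gamma = v_{K^*}K^*$. The point is that over a residue field that is algebraically closed of characteristic zero, the valued field $(K^*,v_{K^*})$ is elementarily equivalent to $(F^*((\Gamma)), v_{\rm pser})$ by the Ax–Kochen/Ershov principle (equicharacteristic zero, see \cite[4.6.4]{P2}); and since $F^* \equiv \mathbb{C}$, this is elementarily equivalent to $\mathbb{C}((\Gamma'))$ for an appropriate $\Gamma'$ elementarily equivalent to $\Gamma$. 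The formula $\phi$ transports along this elementary equivalence and, on the power series side, still defines a valuation ring that properly refines the power series valuation (refinement and properness being first-order once we name a witness). But on $\mathbb{C}((\Gamma'))$ Lemma \ref{C} forbids any proper refinement of the power series valuation from being even $\mathbb{C}$-definable, let alone $\emptyset$-definable — contradiction.

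The main obstacle I expect is bookkeeping around the move to $\mathbb{C}((\Gamma))$: Lemma \ref{C} is stated for $\mathbb{C}$ itself, not for an arbitrary algebraically closed field $F^*$ of characteristic zero and arbitrary cardinality, so one must either (i) reprove Lemma \ref{C} verbatim with $\mathbb{C}$ replaced by any algebraically closed field $F$ of characteristic $0$ and a subfield $K_0$ over which $F$ has positive transcendence degree (the automorphism argument goes through unchanged, since an algebraically closed field has automorphisms moving any transcendental element to its inverse over any algebraically closed subfield it properly contains), or (ii) carefully choose cardinalities so that $F^* \cong \mathbb{C}$ as fields. Option (i) is cleaner: I would simply invoke the evident generalization of Lemma \ref{C}. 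The remaining care is to ensure ``$w$ is a \emph{proper} refinement'' survives the elementary equivalence — this is where the hypothesis that $v_K$ is $\emptyset$-definable is used, exactly as in Observation \ref{Obshen}: it guarantees $v_{K^*}$ is definable by the same formula, so the inclusion $\mathcal O_{w^*} \subsetneq \mathcal O_{v_{K^*}}$ together with a named witness $a \in \mathcal O_{v_{K^*}} \setminus \mathcal O_{w^*}$ is a first-order fact that transfers. Once these points are in place, the contradiction with (the generalization of) Lemma \ref{C} closes the proof.
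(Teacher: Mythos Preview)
Your overall strategy---reduce to Lemma~\ref{C} via Ax--Kochen/Ershov---is the paper's approach, but your execution contains a genuine gap and two unnecessary detours.

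The gap: you twice invoke ``the hypothesis that $v_K$ is $\emptyset$-definable'' (first to transfer ``$w\subsetneq v_K$'' to $K^*$, then to transfer properness across the AKE equivalence). Proposition~\ref{Prop:H2} carries \emph{no} such hypothesis; that assumption enters only in Corollary~\ref{MC}. So as written, your argument for why properness survives does not stand. Relatedly, passing to a ring-language elementary extension $K^*\succ K$ does not obviously give you control over $v_{K^*}$: the canonical henselian valuation is not in general $\emptyset$-definable, so there is no reason its valuation ring on $K^*$ should be $\phi'$-defined for any formula $\phi'$ you can name in advance.

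The fix is exactly what the paper does, and it is simpler than your workaround. Ax--Kochen/Ershov gives an elementary equivalence of \emph{valued} fields
\[
(K,v_K)\ \equiv\ \bigl(\mathbb{C}((v_KK)),\,v\bigr)
\]
in the language $\mathcal{L}_{\rm ring}\cup\{\mathcal{O}\}$ with a predicate $\mathcal{O}$ interpreted as the valuation ring. The statement ``$\phi$ defines a valuation ring properly contained in $\mathcal{O}$'' is a single first-order sentence of this expanded language, so it transfers immediately---no need for $v_K$ to be ring-definable, no saturation, and no naming of a witness. Since ${\rm ACF}_0$ is complete, one can take the residue field to be $\mathbb{C}$ on the nose, so your proposed generalization of Lemma~\ref{C} to arbitrary algebraically closed residue fields (while true, by the same automorphism argument) is also not needed.
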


\begin{proof}
Let $w$ be a proper refinement of $v_K$ and suppose that $\mathcal{O}_w=\phi(K)$ for some formula $\phi$.
Since the theory of algebraically closed fields of characteristic $0$ is complete,
Ax-Kochen/Ersov (\cite[4.6.4]{P2}) implies that $(K,v_K)\equiv(L,v)$, where 
$L=\mathbb{C}((v_KK))$ and $v$ is the power series valuation on $L$.
Since this is an elementary equivalence of valued fields, and $\phi(K)\subsetneqq\mathcal{O}_{v_K}$,
also $\phi(L)\subsetneqq\mathcal{O}_v$, so $\phi$ defines a proper refinement of $v$,
which is impossible by Lemma \ref{C}.
\end{proof}

\begin{Corollary} \label{MC}
Let $K$ be a field with $v_K \in H_2(K)$ and $\mathrm{char}(Kv_K)=0$.
If $v_K$ is $\emptyset$-definable, then $v_K$ is $\emptyset$-definable
as such by an $\forall\exists$-formula.
\end{Corollary}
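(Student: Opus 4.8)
The plan is to combine Proposition~\ref{Prop:H2} with Observation~\ref{Obshen}(2). The point of Corollary~\ref{MC} is that for fields $K$ with $v_K\in H_2(K)$ and $\mathrm{char}(Kv_K)=0$, the gap between ``$\emptyset$-definable'' and ``$\emptyset$-definable as such by an $\forall\exists$-formula'' collapses entirely, and this is driven by the rigidity statement just proven.

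First I would argue that if $v_K$ is $\emptyset$-definable, then it is automatically $\emptyset$-definable \emph{as such}. Suppose $\phi(x)$ defines $\mathcal{O}_{v_K}$ in $K$, and let $L\equiv K$ be arbitrary. Since $v_K\in H_2(K)$ means precisely that $Kv_K$ is separably closed, and since ``$v_K\in H_2(K)$'' together with ``$\mathrm{char}(Kv_K)=0$'' is expressible by a first-order theory of $K$ in the language of rings (using that $v_K$ is $\emptyset$-definable, one can quantify over the residue field and its roots of polynomials), the same holds for $L$: that is, $v_L\in H_2(L)$ and $\mathrm{char}(Lv_L)=0$. Now $\phi(L)$ is a subring of $L$, and by elementary equivalence it shares with $\mathcal{O}_{v_K}$ all first-order properties — in particular it is a henselian valuation ring with separably closed residue field of characteristic zero. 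Hence $\phi(L)=\mathcal{O}_w$ for some $w\in H_2(L)$. Since $v_L$ is the coarsest element of $H_2(L)$, either $w=v_L$ or $w$ is a proper refinement of $v_L$. The latter is impossible by Proposition~\ref{Prop:H2} applied to $L$ (note $L$ satisfies the same hypotheses). Therefore $\phi(L)=\mathcal{O}_{v_L}$, so $\phi$ defines $v_K$ as such.

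Once we know $v_K$ is $\emptyset$-definable as such, Observation~\ref{Obshen}(2) immediately gives that $v_K$ is $\emptyset$-$\forall\exists$-definable, since $v_K\in H_2(K)$; and inspecting its proof, the formula obtained via Theorem~\ref{P} is one that defines $v_K$ uniformly across all $L\equiv K$, i.e.\ as such. This yields the corollary.

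The main obstacle is the bookkeeping in the first step: making precise that ``$v_K\in H_2(K)$ and $\mathrm{char}(Kv_K)=0$'' transfers to elementarily equivalent fields, and that $\phi(L)$ being elementarily equivalent (as a pair $(L,\phi(L))$ with $(K,\mathcal{O}_{v_K})$) forces $\phi(L)$ to be a henselian valuation ring of the right type rather than merely some definable subring. This is where the hypothesis that $v_K$ is $\emptyset$-definable is genuinely used — it lets one express all the relevant structure of the pair $(K,\mathcal{O}_{v_K})$ as a first-order theory of $K$ alone. Everything after that is a routine application of the two results already in hand.
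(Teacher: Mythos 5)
Your proof is correct and follows essentially the same route as the paper: use elementary equivalence to show that for any $L\equiv K$, the formula $\phi$ defines a henselian valuation ring $\mathcal{O}_w$ with $w\in H_2(L)$ and residue characteristic $0$, deduce $v_L\in H_2(L)$ and $\mathrm{char}(Lv_L)=0$, invoke Proposition~\ref{Prop:H2} to rule out $w$ being a proper refinement of $v_L$, and conclude ``as such'' definability before applying Observation~\ref{Obshen}. One small imprecision in your phrasing: what elementary equivalence transfers is not directly the statement ``$v_L\in H_2(L)$'' but rather the properties of $\phi(L)$ (that it is a henselian valuation ring with separably closed residue field of characteristic zero); from this one then infers $H_2(L)\neq\emptyset$, hence $v_L\in H_2(L)$, and since $v_L$ coarsens $\phi(L)$, also $\mathrm{char}(Lv_L)=0$ — but this is exactly what you use in the next step, so the argument is sound as written.
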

\begin{proof}
Let $\phi$ be a formula that defines $v_K$.
If $L\equiv K$, then $\phi(L)$ is a henselian valuation ring $\mathcal{O}_w$ with $w\in H_2(L)$ and ${\rm char}(Lw)=0$.
In particular, $v_L\in H_2(L)$ and ${\rm char}(Lv_L)=0$, 
so Proposition \ref{Prop:H2} implies that $w=v_L$,
hence $\phi$ defines $v_K$ as such.
The claim now follows from Observation \ref{Obshen}.
\end{proof}

\begin{Remark} Let $K$ be a field with $v_K \in H_2(K)$, 
$\mathrm{char}(Kv_K)=0$ and $K\neq K^{\rm sep}$.
Then for some prime $p$, $v_K^p$ is a non-trivial coarsening of $v_K$ 
(cf.~\cite[Theorem 3.10]{JK0}). Thus, since $\zeta_p\in K$, 
Proposition \ref{p2} shows that $K$ always admits
\emph{some} $\emptyset$-$\forall\exists$-definable henselian valuation.
\end{Remark}

\begin{Example} \label{Ex:not}
We construct a field $K$ with $v_K\in H_2(K)$ which is {\em not} $\emptyset$-definable:
Let $K=\mathbb{C}((\Gamma))$, where
$$
 \Gamma=\bigoplus_{p}\mathbb{Z}_{(p)}=\ldots\oplus\mathbb{Z}_{(5)}\oplus\mathbb{Z}_{(3)}\oplus\mathbb{Z}_{(2)}
$$
is ordered inverse lexicographically.
Here, $p$ runs over all prime numbers, and $\mathbb{Z}_{(p)}$ is the localization of $\mathbb{Z}$ at $p$.
For every prime $l$, the canonical 
$l$-henselian valuation on $K$ is the power series valuation 
on $K$ with value group $\bigoplus_{p \leq l}\mathbb{Z}_{(p)}$ 
and residue field 
$\mathbb{C}((\bigoplus_{p>l}\mathbb{Z}_{(p)}))$.
However, the canonical henselian valuation on $K$ is the power series valuation
on $K$ with residue field $\mathbb{C}$ and value group 
$\Gamma$. In particular, we have
$\mathrm{char}(Kv_K)=0$, $v_K \in H_2(K)$ and 
$v_K \subsetneq v_K^p$ for all primes $p$. 

We now use Proposition \ref{Prop:H2}  
to see that the canonical henselian valuation is not
$\emptyset$-definable on $K$: 
Note that $\Gamma$ has a nontrivial $p$-divisible subgroup for every prime $p$, thus 
$\Gamma\equiv\mathbb{Q}\oplus\Gamma$, see Lemma \ref{Gamma} below.
Now consider
$L := \mathbb{C}((\mathbb{Q}\oplus\Gamma))$ with the power series valuation $w$.
By the Ax-Kochen/Ershov Theorem (\cite[4.6.4]{P2}), $(K,v_K)\equiv(L,w)$.
If $v_K$ were $\emptyset$-definable, the same formula would define $w$ on $L$.
However, the canonical henselian valuation on $L$ has value group $\Gamma$ and residue field $\mathbb{C}((\mathbb{Q}))$,
so $w$ is a proper refinement of $v_L$, contradicting Proposition \ref{Prop:H2}.

Note that if $p < q$, we have $\mathcal{O}_{v_K^q} \subsetneq
\mathcal{O}_{v_K^p}$. Thus, there are countably many different henselian
valuations $\emptyset$-definable on $K$. Since $v_K$ is not 
$\emptyset$-definable, all $\emptyset$-definable henselian valuations on $K$
 are in $H_1(K)$ by Proposition \ref{Prop:H2}.
\end{Example}

\begin{Example} \label{Ex:H22} Recall that Example \ref{Ex:H2} discusses
a field $K_2:=\mathbb{C}((G_2))$ where $G_2$ is some ordered abelian group
with certain properties. We show there 
that the power series valuation $v_2$ on $K_2$
coincides with the canonical $p$-henselian valuation and is 
$\emptyset$-definable but
not 
$\emptyset$-$\exists\forall$-definable. However, we also have $v_2=v_{K_2}$,
so by Corollary \ref{MC}, the canonical henselian valuation
$v_{K_2}$ is $\emptyset$-definable as such 
but again not $\emptyset$-$\exists\forall$-definable.
\end{Example}

\section{The case $v_K \in H_1(K)$}

Let $K$ be a henselian field with $v_K \in H_1(K)$ and $\mathrm{char}(Kv_K)=0$.
Unlike in the case $v_K \in H_2(K)$,
it is not true that $v_K$ is already $\emptyset$-definable as such 
whenever it is
$\emptyset$-definable. In order to explain this, we need the following
\begin{Definition}
A field $K$ is called \emph{$t$-henselian} if there is 
some $L \equiv K$ such that
$L$ admits a non-trivial henselian valuation.
\end{Definition}
Equivalently, $t$-henselianity can be axiomatized in 
$\mathcal{L}_\textrm{ring}$ 
via the axiom scheme of
admitting a $t$-henselian topology, see \cite[Remark 7.11]{PZ} and 
\cite[p.~203]{P3}.
In \cite[p.~338]{PZ}, Prestel and Ziegler construct a $t$-henselian field $k$ of characteristic $0$ which is 
not henselian and neither algebraically nor real closed.
In particular, no henselian field $L \equiv k$
can admit any non-trivial $\emptyset$-definable henselian valuation. 
Furthermore, they show that any sufficiently saturated elementary 
extension of a $t$-henselian field is 
henselian (\cite[Theorem 7.2]{PZ}).

\begin{Example}
Let $k$ be a $t$-henselian field of characteristic $0$ 
which is not henselian and neither algebraically nor real closed. 
\begin{itemize}
\item Then
$v_k$ is $\emptyset$-definable as it is trivial. 
However, if $k\prec L$ is an elementary extension with $L$ henselian, then $v_L$ is not $\emptyset$-definable: 
Else, $k$ would also admit a non-trivial $\emptyset$-definable 
henselian valuation, contradicting the assumption that $k$ is not
henselian.
\item
The trivial valuation is not the only example for a
canonical henselian valuation which is $\emptyset$-definable but not $\emptyset$-definable as such: 
Consider $K=k((\mathbb{Z}))$. 
Then $v_K$ is the power series valuation with value group $\mathbb{Z}$.
By a result of Ax (\cite{Ax}), there is an $\mathcal{L}_\textrm{ring}$-formula $\phi(x)$ that uniformly defines
all henselian valuations with value group $\mathbb{Z}$ and residue field of characteristic zero.
Now take once more $L \succ k$ henselian and consider $M=L((\mathbb{Z}))$.
Then, since $L$ is henselian, $v_M$ is a proper refinement of the
power series valuation $w$ on $M$ with residue field $L$ and value group
$\mathbb{Z}$. However, we get $\phi(M)=\mathcal{O}_w$. 
Let now $\psi(x)$ be any other $\mathcal{L}_\textrm{ring}$-formula defining
$\mathcal{O}_{v_K}$ in $K$. Then
$$K \models \forall x (\psi(x) \longleftrightarrow \phi(x)),$$
so any such formula will again define $\mathcal{O}_w$ in $M$.
Hence, $v_K$ is $\emptyset$-definable but not 
$\emptyset$-definable as such.
\end{itemize}
\end{Example}

\begin{Observation} \label{Obs:t}
Let $K$ be a henselian field with $\mathrm{char}(Kv_K)=0$ and assume that
 $v_K$ is $\emptyset$-definable. Then $v_K$ is $\emptyset$-definable as such
iff $Kv_K$ is separably closed or
not $t$-henselian. 
\end{Observation}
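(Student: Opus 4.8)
The plan is to prove the two directions separately, using Proposition \ref{Prop:H2} is not available here (that was the $H_2$ case), so instead the key structural input will be the dichotomy of the canonical henselian valuation together with the Prestel–Ziegler theory of $t$-henselianity. First, for the easy implication: suppose $Kv_K$ is separably closed. Then $v_K \in H_2(K)$, and by Corollary \ref{MC} (since ${\rm char}(Kv_K)=0$) any formula defining $v_K$ defines it as such. Now suppose instead $Kv_K$ is not $t$-henselian. Let $\phi$ be an $\mathcal{L}_{\rm ring}$-formula with $\phi(K)=\mathcal{O}_{v_K}$, and let $L\equiv K$. Then $\phi(L)$ is the valuation ring of some henselian valuation $w_L$ on $L$ with ${\rm char}(Lw_L)=0$, and $Lw_L\equiv Kv_K$ (the statement ``$\phi$ defines a henselian valuation ring with residue field satisfying a given first-order sentence'' is itself first-order, so it transfers). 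Since $Kv_K$ is not $t$-henselian, neither is $Lw_L$; in particular $Lw_L$ is not separably closed, so $w_L\in H_1(L)$. I then need to conclude $w_L = v_L$. Since $v_L\in H_1(L)$ as well (because $Lv_L\equiv Lw_L$ is not separably closed, being not $t$-henselian), both $w_L$ and $v_L$ are comparable henselian valuations in $H_1(L)$; the claim is that $w_L$ must in fact be the finest one, i.e.\ $w_L=v_L$.

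The main obstacle is exactly this last step: ruling out that $w_L$ is a proper coarsening of $v_L$. The idea is to argue by the same kind of transfer used for the $H_2$ case in Proposition \ref{Prop:H2}, but now using that the residue field $Lw_L$ is \emph{not} $t$-henselian to force $w_L$ to be everything. Concretely: if $w_L \subsetneq v_L$ is proper, then $v_L$ induces a non-trivial henselian valuation $\bar{v}$ on the residue field $Lw_L$, witnessing that $Lw_L$ is henselian, hence $t$-henselian — contradiction. This is the cleanest route and it does not even need Ax–Kochen/Ershov: the residue field of a coarsening of a henselian valuation, relative to the induced valuation, carries the induced henselian valuation of the finer one, and if that induced valuation is non-trivial the residue field is henselian, a fortiori $t$-henselian. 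Thus $w_L=v_L$, so $\phi$ defines $v_K$ as such.

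For the converse, suppose $Kv_K$ is neither separably closed nor $t$-henselian fails, i.e.\ assume $Kv_K$ is not separably closed and \emph{is} $t$-henselian, and that $v_K$ is $\emptyset$-definable; I must show it is not definable as such. Since $Kv_K$ is $t$-henselian, there is $F\equiv Kv_K$ admitting a non-trivial henselian valuation $u$; note $F$ is not separably closed since $Kv_K$ is not. Build a field $L$ elementarily equivalent to $K$ whose residue field under (the formula defining) $v_K$ is $F$ rather than $Kv_K$: concretely, by Ax–Kochen/Ershov $(K,v_K)\equiv(Kv_K((\Gamma)), v_{\rm power})$ where $\Gamma=v_KK$, and replacing $Kv_K$ by $F$ gives $L:=F((\Gamma))$ with $(L, v_{\rm power})\equiv(K,v_K)$; so $\phi(L)=\mathcal{O}_{v_{\rm power}}$. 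But on $L=F((\Gamma))$ the power series valuation is \emph{not} the canonical henselian valuation, because $F$ itself carries the non-trivial henselian valuation $u$, so composing gives a strictly finer henselian valuation on $L$ with non-separably-closed residue field, whence $v_L \subsetneq v_{\rm power}=\phi(L)$. Therefore $\phi$ does not define $v_L=v_{L}$, i.e.\ $v_K$ is not $\emptyset$-definable as such. The one technical point to be careful about is ensuring $F$ can be chosen so that $F((\Gamma))\equiv K$ — this is exactly the AKE transfer, valid since ${\rm char}(Kv_K)=0$ — and that the induced valuation $u\circ v_{\rm power}$ is genuinely strictly finer than $v_{\rm power}$ and still in $H_1$, which holds because $Fu$ is a subfield of (hence not larger than, and in fact still not separably closed as) the relevant residue fields.
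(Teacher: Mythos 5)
Your proposal is correct and follows essentially the same route as the paper: the forward direction (``$Kv_K$ separably closed or not $t$-henselian'' implies ``$v_K$ definable as such'') is handled in two cases, the separably closed one via Corollary \ref{MC} and the non-$t$-henselian one by observing that a proper coarsening $w_L$ of $v_L$ would make $Lw_L$ henselian and hence $t$-henselian, while the converse direction uses the same Ax--Kochen/Ershov transfer to $F((v_KK))$ for some henselian $F\equiv Kv_K$. The paper states the first half as a single contrapositive (``not as such'' implies ``not separably closed and $t$-henselian''), but the substance is identical. One small wrinkle: you justify $v_L\in H_1(L)$ by claiming $Lv_L\equiv Lw_L$, which is not available at that point (it only follows once $w_L=v_L$ is established). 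Fortunately $v_L\in H_1(L)$ is not actually needed for the comparability step: since $w_L\in H_1(L)$, it is automatically a coarsening of $v_L$ regardless of whether $v_L$ lies in $H_1(L)$ or $H_2(L)$ (in the latter case because every $H_2$-valuation is finer than every $H_1$-valuation), so the ``no proper coarsening'' argument goes through unchanged and the circular parenthetical should simply be dropped.
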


\begin{proof} 
Assume that $K$ is a field with $\mathrm{char}(Kv_K)=0$ such that $v_K$ is $\emptyset$-definable, 
say via the $\mathcal{L}_\textrm{ring}$-formula $\phi(x)$. 

Assume first that $v_K$ is 
not $\emptyset$-definable as such. Then,
using Corollary \ref{MC}, we get $v_K \in H_1(K)$.
Furthermore, there is some $L \equiv K$ such that $\phi(L) =: \mathcal{O}_w 
\neq \mathcal{O}_{v_L}$. Since we have $Lw \equiv Kv_K$ and
$v_K \in H_1(K)$, 
$Lw$ is not separably closed,
so we get $\mathcal{O}_w \supsetneq \mathcal{O}_{v_L}$.
In particular, $v_L$ induces
a non-trivial henselian valuation on $Lw$, so $Lw$ is henselian. Hence
$Kv_K$ is $t$-henselian. 

Assume now that $Kv_K$ is not separably closed but $t$-henselian.
Take some $L \succ Kv_K$ henselian and let $u$ be the power series valuation on $K':=L((v_KK))$.
By Ax-Kochen/Ershov (\cite[4.6.4]{P2}), $(K,v_K)\equiv(K',u)$, so $\phi$ defines $\mathcal{O}_u$ in $K'$.
Since $u$ is a proper coarsening of $v_{K'}$, we get that $v_K$ is not $\emptyset$-definable as such.
\end{proof}

Recall that we have shown in Observation \ref{Obshen} that in case we have $v_K
\in H_1(K)$ and $v_K$ is 
$\emptyset$-definable as such, then $v_K$ is
$\emptyset$-$\exists\forall$-definable. 
We use the above Observation to show
that, in general, this definition cannot be improved
when it comes to quantifiers:

\begin{Example} \label{Ex:H1}
We construct a field $K$ with $v_K \in H_1(K)$ such that $v_K$ is
$\emptyset$-definable as such but not $\emptyset$-$\forall\exists$-definable.

Recall that in Example \ref{Ex:H1p} we construct a field 
$k$ with $k \cong k((\mathbb{Q}))(X)$
and extensions
$L_1 := k((u^\mathbb{Q}))((v^\mathbb{Q}))$ and 
$L_2:=k((u^\mathbb{Q}))(X)((v^\mathbb{Q}))$ with valuations $v_1$, $v_2$ such that
$(L_1,v_1) \equiv (L_2,v_2)$ and $L_1\prec_\exists L_2$.
Since $k$ is by construction hilbertian (\cite[13.2.1]{FJ}), 
it is not henselian (nor $t$-henselian,
see \cite[15.5.4]{FJ}) and so the
canonical henselian valuation $v_{L_1}$ on $L_1$ is the power series valuation $v_1$
with residue field $k$ and value group $\mathbb{Q}\oplus \mathbb{Q}$.
Furthermore, $v_{L_1}$ is $\emptyset$-definable (see Example \ref{Ex:H1p}) 
and thus 
$\emptyset$-definable as such 
by Observation \ref{Obs:t}.
On the other hand, the canonical henselian valuation $v_{L_2}$ on $L_2$ is the
power series valuation $v_2$ with residue field $k((u^\mathbb{Q}))(X)$ and value group
$\mathbb{Q}$. Thus, the restriction of $v_{L_2}$ to $L_1$ is a proper coarsening
of $v_{L_1}$ and so $v_{L_1}$ is not $\emptyset$-definable 
by an $\forall\exists$-formula
by Theorem \ref{P}.
\end{Example}

Furthermore, $v_K$ is always $\emptyset$-definable if its residue field
is not $t$-henselian:

\begin{Proposition} \label{Prop:H1}
Let $K$ be a field with $v_K\in H_1(K)$ and $Kv_K$ not $t$-henselian.
Then $v_K$ is $\emptyset$-definable as such by an $\exists\forall$-formula.
\end{Proposition}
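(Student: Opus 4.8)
The plan is to show that under the hypotheses of Proposition \ref{Prop:H1}, every field $L \equiv_\exists K$ — meaning every $L$ with $K \prec_\exists L$ — has the property that the restriction $w$ of $v_L$ to (a suitable subfield) reflects the canonical henselian valuation correctly, so that Theorem \ref{P} applies to give an $\exists\forall$-definition. More precisely, I would first verify that $v_K$ is $\emptyset$-definable \emph{as such}: this is exactly Observation \ref{Obs:t}, since $Kv_K$ is not $t$-henselian (and in particular, by the hypothesis $v_K \in H_1(K)$, we are not in the trivially-valued case, so $v_K$ is non-trivial henselian with $Kv_K \neq Kv_K^{\rm sep}$). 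So it remains to pin down the quantifier complexity, and by Observation \ref{Obshen}(1) the answer $\exists\forall$ is forced \emph{provided} the class of models is elementary — which it is, since "$v_K \in H_1(K)$, $\mathrm{char}(Kv_K)=0$, and $Kv_K$ not $t$-henselian" is an elementary condition on $K$ (being $H_1$ versus $H_2$ is elementary once $v_K$ is $\emptyset$-definable as such, $t$-henselianity of the residue field is elementary by \cite[Remark 7.11]{PZ}, \cite[p.~203]{P3}, and characteristic zero of the residue field is a scheme of elementary conditions).

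Concretely, I would run the argument of Observation \ref{Obshen}(1) directly. Let $\phi(x)$ be the formula defining $v_K$ as such. Fix $L, M \equiv K$ with $L \prec_\exists M$. Then $L$ is relatively algebraically closed in $M$, so by Lemma \ref{Lemhen}(1) the restriction $w$ of $v_M$ to $L$ is henselian on $L$. Now I need to show $\mathcal{O}_{v_L} \subseteq \mathcal{O}_w$, i.e.\ that $w$ is a coarsening of $v_L$. Since $v_L \in H_1(L)$ (as $L \equiv K$ and $v_L$ is $\emptyset$-definable as such, so the $H_1$/$H_2$ dichotomy transfers), we have $H_2(L) = \emptyset$, so $v_L$ is the \emph{finest} henselian valuation on $L$; hence any henselian valuation on $L$ — in particular $w$ — is a coarsening of $v_L$. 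That gives $\mathcal{O}_{v_L} \subseteq \mathcal{O}_w \subseteq \mathcal{O}_{v_M}$, which is precisely the criterion of Theorem \ref{P} for an $\exists\forall$-formula. Thus $v_K$ is $\emptyset$-$\exists\forall$-definable as such.

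The point where care is needed is the claim that "$Kv_K$ not $t$-henselian" together with the other hypotheses gives an elementary class, and in particular that the $H_1$/$H_2$ dichotomy is preserved under elementary equivalence here; but this is handled exactly as in the proof of Observation \ref{Obshen} (using that $v_K$ is $\emptyset$-definable as such, which we established via Observation \ref{Obs:t}), so there is no real obstacle — the proposition is essentially a packaging of Observations \ref{Obshen} and \ref{Obs:t}. One subtlety worth spelling out: we must check that $v_K$ being $\emptyset$-definable \emph{as such} already follows from the hypotheses alone, without assuming $\emptyset$-definability as a separate input. This is where the non-$t$-henselianity of $Kv_K$ does the work — it is precisely the hypothesis of the forward direction of Observation \ref{Obs:t} read contrapositively, combined with the fact that a field with non-$t$-henselian residue field still admits a $\emptyset$-definable canonical henselian valuation (the formula can be taken, e.g., as the one from Ax's work or constructed directly, and non-$t$-henselianity of the residue field guarantees no elementarily equivalent field can carry a coarsening that looks the same). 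I would state this last point as a small lemma or cite the relevant construction, then conclude as above.
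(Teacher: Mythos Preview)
Your argument has a genuine circularity. You want to invoke Observation~\ref{Obs:t} to get that $v_K$ is $\emptyset$-definable as such, and then Observation~\ref{Obshen}(1) to upgrade to $\exists\forall$. But Observation~\ref{Obs:t} has two hypotheses that are \emph{not} among the hypotheses of Proposition~\ref{Prop:H1}: it assumes $\mathrm{char}(Kv_K)=0$, and more seriously it assumes that $v_K$ is already $\emptyset$-definable. You notice the second problem yourself, but your proposed patch (Ax's formula, or ``constructed directly'') does not work in general: Ax's formula applies only to henselian valuations with value group $\mathbb{Z}$, and you give no other candidate. The same circularity reappears in your ``concrete'' paragraph: applying Theorem~\ref{P} requires an \emph{elementary} class $\Sigma$ of structures $(L,\mathcal{O})$, and you are implicitly taking $\Sigma=\{(L,\mathcal{O}_{v_L}):L\equiv K\}$. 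That class is elementary only if $v_K$ is $\emptyset$-definable as such --- which is what you are trying to prove. Likewise, your justification that ``$v_L\in H_1(L)$ since the $H_1$/$H_2$ dichotomy transfers'' already presupposes definability as such.

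The paper's proof sidesteps this entirely by choosing a different elementary class: it takes $\mathcal{K}=\{(L,v):(L,v)\equiv(K,v_K)\}$, the models of the complete theory of the \emph{valued} field $(K,v_K)$, which is trivially first-order. The crucial observation is then that for any $(L,v)\in\mathcal{K}$ one has $Lv\equiv Kv_K$, so $Lv$ is neither henselian nor separably closed, which forces $v=v_L\in H_1(L)$. Now the $\exists\forall$-criterion of Theorem~\ref{P} is checked exactly as you do (restriction of $v_2$ is henselian, hence a coarsening of $v_1=v_{L_1}$), and the resulting $\exists\forall$-formula $\phi$ defines $v$ in every $(L,v)\in\mathcal{K}$. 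Since $\phi$ is an $\mathcal{L}_{\rm ring}$-formula, for any $L\equiv K$ one has $(L,\phi(L))\equiv(K,\phi(K))=(K,v_K)$, hence $(L,\phi(L))\in\mathcal{K}$ and therefore $\phi(L)=\mathcal{O}_{v_L}$; this is how ``as such'' falls out for free. No prior definability, and no characteristic assumption, is needed.
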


\begin{proof} 
Consider the elementary class of valued fields
$$\mathcal{K} := 
\{ (L, v) \;|\; (L, v) \equiv (K,{v_K}) \}.$$
Take $(L_1, v_1)$ and $(L_2, v_2)$ in $\mathcal{K}$ with
$L_1 \prec_\exists L_2$. 
Then $v_1$ is a henselian valuation
on $L_1$ with non-henselian residue field, so $v_1=v_{L_1}$. 
As $Kv_K$ is not separably closed, neither is $Lv_{1} = Lv_{L_1}$ 
and we get $v_{L_1} \in H_1(L_1)$.
Lemma 
\ref{Lemhen} implies that 
the restriction of $v_2$ to $L_1$ is henselian and is hence a
coarsening of $v_{L_1}=v_1$. By Theorem \ref{P}, there is a 
parameter-free $\exists\forall$-formula
defining $\mathcal{O}_v$ in $L$ for any $(L, v) \in \mathcal{K}$.
\end{proof}

We now want to study some assumptions under which $\emptyset$-definability
of $v_K$ and $Kv_K$ $t$-henselian imply that $v_K$ is definable by an 
$\emptyset$-$\forall\exists$-formula.

\begin{Definition} Let $K$ be a field.
A valuation $v$ on $K$ is called \emph{tamely branching at $p$} 
if the value group is
not divisible by $p$, $\mathrm{char}(Kv) \neq p$ and if
$[vK:pvK]=p$, then ${Kv}$ has a finite separable extension of degree divisible 
by $p^2$.
\end{Definition}

\begin{Theorem}[Koenigsmann, {\cite{EP}[5.4.3]}]
A field $K$ admits a henselian valuation, tamely branching at some prime $p$ iff
$G_K$ has a non-procyclic $p$-Sylow subgroup $P \not\cong \mathbb{Z}_2 \rtimes
\mathbb{Z}/2\mathbb{Z}$ with a non-trivial abelian normal closed subgroup
$N$ of $P$. \label{tame}
\end{Theorem}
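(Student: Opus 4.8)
The statement is Koenigsmann's theorem and is recorded as \cite[Theorem~5.4.3]{EP}, so in the paper it is enough to cite it; but let me sketch the shape of the argument I would give. It is purely Galois-theoretic and has two halves: the forward implication is an exercise in the ramification theory of henselian valued fields, while the converse is the substantial content of Koenigsmann's work.

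\emph{From a valuation to the Galois group.} Suppose $v$ is a henselian valuation on $K$ tamely branching at $p$. First I fix a $p$-Sylow subgroup $P$ of $G_K$ and let $I_v$ be the inertia group of $v$, a closed normal subgroup of $G_K$ with $G_K/I_v\cong G_{Kv}$. Since $\mathrm{char}(Kv)\neq p$, the $p$-Sylow subgroup $N:=P\cap I_v$ of $I_v$ is \emph{tame}, hence a closed abelian normal subgroup of $P$, canonically isomorphic to $\mathbb{Z}_p^{(r)}$ with $r=\dim_{\mathbb{F}_p}vK/pvK$; in particular $N\neq 1$ because $pvK\neq vK$, and $P/N\cong\overline P$ for a $p$-Sylow subgroup $\overline P$ of $G_{Kv}$. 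Now I split into the two cases built into the definition of ``tamely branching''. If $[vK:pvK]\geq p^2$ then $r\geq 2$, so $N$, and hence $P$, is non-procyclic, and $P\not\cong\mathbb{Z}_2\rtimes\mathbb{Z}/2\mathbb{Z}$ since that group has no non-procyclic abelian normal subgroup. If $[vK:pvK]=p$ then $N\cong\mathbb{Z}_p$, while the residue clause forces $\overline P$ to have order (as a supernatural number) divisible by $p^2$; by the Artin--Schreier theorem every finite subgroup of an absolute Galois group has order at most $2$, so $\overline P$ must be infinite. But an infinite quotient $P/N$ excludes both $P$ procyclic (a procyclic $P$ containing a copy of $\mathbb{Z}_p$ would itself be $\cong\mathbb{Z}_p$, with finite quotient by $N$) and $P\cong\mathbb{Z}_2\rtimes\mathbb{Z}/2\mathbb{Z}$ (whose quotient by its canonical normal copy of $\mathbb{Z}_2$ is finite). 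In all cases $P$ has the asserted structure, witnessed by $N$.

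\emph{From the Galois group to a valuation.} Conversely, I am handed a $p$-Sylow subgroup $P$ of $G_K$ that is non-procyclic, not isomorphic to $\mathbb{Z}_2\rtimes\mathbb{Z}/2\mathbb{Z}$, and carries a non-trivial closed abelian normal subgroup $N$, and I must produce an honest henselian valuation on $K$ that is tamely branching at $p$. The guiding principle is that an abelian normal subgroup of a $p$-Sylow behaves Galois-theoretically like an inertia group: working over the fixed field of $P$ (a prime-to-$p$ extension of $K$) and invoking the theory of $p$-rigid elements together with Koenigsmann's Galois characterization of $p$-henselian fields, one extracts a $p$-henselian valuation $w$ on $K$ with $\mathrm{char}(Kw)\neq p$ and $wK$ not $p$-divisible, i.e.\ ``branching'' at $p$. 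The hypotheses that $P$ be non-procyclic and not $\cong\mathbb{Z}_2\rtimes\mathbb{Z}/2\mathbb{Z}$ then enter, via the classification of pro-$p$ groups admitting a non-trivial abelian normal subgroup, both to replace $w$ by an honestly henselian valuation and to secure the residue-field degree condition in the definition of ``tamely branching''.

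I expect the converse direction to be the main obstacle: manufacturing a concrete valuation out of the purely group-theoretic hypothesis, and certifying that it is genuinely henselian (not merely $p$-henselian) and tamely branching, is exactly where Koenigsmann's machinery does the real work, whereas the forward direction is a routine unwinding of ramification theory. Finally I would record why the group $\mathbb{Z}_2\rtimes\mathbb{Z}/2\mathbb{Z}$ has to be excluded at all: it is the pro-$2$ part of the absolute Galois group of $\mathbb{R}((t))$, which is not tamely branching at $2$ — the residue field $\mathbb{R}$ has no separable extension of degree divisible by $4$ — nor at any other prime, so the caveat in the statement is genuinely needed.
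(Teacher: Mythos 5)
The paper does not prove this theorem at all: it is quoted verbatim from Engler--Prestel and used as a black box (specifically in the proof of Proposition \ref{kQ}), so there is strictly speaking nothing to compare your sketch against. You recognize this at the outset, which is the correct assessment.

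That said, your forward-direction sketch is mathematically sound and is the standard ramification-theoretic unwinding. The key points all check out: since $v$ is henselian, $I_v\lhd G_K$ and $G_K/I_v\cong G_{Kv}$; since ${\rm char}(Kv)\neq p$, the wild inertia is a pro-$\ell$ group for $\ell\neq p$ (or trivial), so $N:=P\cap I_v$ maps isomorphically into the abelian tame quotient and is indeed $\cong\mathbb{Z}_p^{(r)}$ with $r=\dim_{\mathbb{F}_p}vK/pvK\geq 1$, abelian, closed, and normal in $P$; and $P/N$ is a $p$-Sylow of $G_{Kv}$. Your case split on $r\geq 2$ versus $r=1$ exactly mirrors the two clauses in the definition of ``tamely branching'', and the Artin--Schreier argument forcing $P/N$ to be infinite when $r=1$ is the right way to exclude both procyclic $P$ and $P\cong\mathbb{Z}_2\rtimes\mathbb{Z}/2\mathbb{Z}$ (whose only normal closed procyclic subgroups have finite quotient). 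Your closing remark about $\mathbb{R}((t))$ --- whose pro-$2$ Galois group is exactly $\mathbb{Z}_2\rtimes\mathbb{Z}/2\mathbb{Z}$ and which carries no tamely branching henselian valuation --- is a genuinely useful sanity check that the excluded group must be excluded.

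For the converse you only name the ingredients ($p$-rigid elements, Koenigsmann's Galois characterization of $p$-henselian valuations, passage to the fixed field of $P$) without carrying out any construction; in particular you do not explain how a $p$-henselian valuation is upgraded to an honest henselian one, which is where the real content lies. You are upfront that this is a pointer rather than a proof, and since the paper itself defers entirely to \cite{EP}, this is not a gap relative to the paper --- but be aware that as a self-contained argument the converse half is missing, not merely compressed.
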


\begin{Proposition} \label{kQ}
Let $k$ be $t$-henselian 
with 
$v_k$ trivial and
$\mathrm{char}(k)=0$.
Assume that the absolute Galois group $G_k$ of $k$ is small.
Then, both of the following hold: 
\begin{enumerate}
\item For any $L \equiv k$, every henselian valuation $w$ on $L$ with 
$\mathrm{char}(Lw)=0$ has divisible value group. 
In particular, if $L \succ k$, then $v_LL$ is divisible.
\item We have $k\equiv k((\mathbb{Q}))$. 
\end{enumerate}
\end{Proposition}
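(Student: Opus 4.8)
The plan is to establish (1) by a transfer argument resting on Koenigsmann's characterisation of tamely branching valuations (Theorem \ref{tame}), and then to derive (2) from (1) via Ax--Kochen/Ershov.

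For (1), suppose towards a contradiction that some $L\equiv k$ carries a henselian valuation $w$ with $\mathrm{char}(Lw)=0$ and $wL$ not $p$-divisible for some prime $p$. The first goal is to produce on $L$ a henselian valuation tamely branching at some prime: if $[wL:pwL]\geq p^{2}$ then $w$ itself works, and the same holds when $[wL:pwL]=p$ and $Lw$ has a finite separable extension of degree divisible by $p^{2}$; the remaining border case, $[wL:pwL]=p$ with $[Lw(p):Lw]\leq p$, must be handled separately, by refining $w$ along a henselian valuation on $Lw$, switching to another prime $q$ for which $wL$ is less $q$-divisible, or rerouting through the canonical $p$-henselian valuation of $L$ --- this case distinction is one of the technical points. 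Once a henselian valuation on $L$ tamely branching at some prime is at hand, Theorem \ref{tame} imposes a structural condition on $G_{L}$. Smallness of $G_{k}$ now enters decisively: for a small profinite group this condition is witnessed on a bounded finite quotient, hence is expressible by a single $\mathcal{L}_{\rm ring}$-sentence, so, as $L\equiv k$, it also holds for $G_{k}$; applying Theorem \ref{tame} in the opposite direction, $k$ admits a henselian valuation tamely branching at $p$, which has non-trivial value group. This contradicts that $v_{k}$ is trivial, i.e.\ that $k$ is not henselian. For the ``in particular'', let $L\succ k$; if $\mathrm{char}(Lv_{L})=p>0$ (recall $\mathrm{char}(L)=0$), then the wild inertia of $v_{L}$ is a non-trivial normal closed pro-$p$ subgroup of $G_{L}$, hence contains a non-trivial abelian closed normal subgroup, while $L^{\times}/(L^{\times})^{p}$ is large enough to force the $p$-Sylow of $G_{L}$ to be non-procyclic and not isomorphic to $\mathbb{Z}_{2}\rtimes\mathbb{Z}/2\mathbb{Z}$; by Theorem \ref{tame} and the same smallness transfer, $k$ would again be henselian, a contradiction. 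Thus $\mathrm{char}(Lv_{L})=0$, and the first part gives that $v_{L}L$ is divisible.

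For (2), choose a sufficiently saturated $N\succ k$; by Prestel--Ziegler \cite[Theorem 7.2]{PZ}, $N$ is henselian. By (1), $v_{N}N$ is divisible and $\mathrm{char}(Nv_{N})=0$, so, since all divisible ordered abelian groups are elementarily equivalent, Ax--Kochen/Ershov \cite[4.6.4]{P2} gives $(N,v_{N})\equiv(Nv_{N}((\mathbb{Q})),w)$ with $w$ the power series valuation, hence $N\equiv Nv_{N}((\mathbb{Q}))$ as fields. It therefore suffices to prove $Nv_{N}\equiv k$: then $N\equiv Nv_{N}((\mathbb{Q}))\equiv k((\mathbb{Q}))$, and $N\equiv k$ yields $k\equiv k((\mathbb{Q}))$. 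Restricting $v_{N}$ to $k$ gives a henselian valuation on $k$, which is trivial since $k$ is not henselian, so $k$ embeds into $Nv_{N}$ via the residue map, and by Lemma \ref{Lemhen} it is relatively algebraically closed there; since moreover $k\prec N$ and $Nv_{N}$ sits inside $N$ as a coefficient field, witnesses of existential formulae over $k$ found in $Nv_{N}$ lie in $N$ and reflect down to $k$, so in fact $k\prec_{\exists}Nv_{N}$ and the two fields share the same existential theory. If $v_{N}\in H_{2}(N)$, then $Nv_{N}$ is separably closed of characteristic $0$, hence algebraically closed, so $k$ is algebraically closed and $Nv_{N}\equiv k$ by completeness of the theory of algebraically closed fields of characteristic $0$. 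If $v_{N}\in H_{1}(N)$, then $Nv_{N}$ is not henselian (otherwise composing $v_{N}$ with a non-trivial henselian valuation of $Nv_{N}$ would yield a henselian valuation on $N$ properly finer than $v_{N}$, contradicting that $v_{N}$ is the finest henselian valuation on $N$), and one concludes $Nv_{N}\equiv k$ by exploiting the saturation of $N$ together with the relative algebraic closedness of $k$ in $Nv_{N}$ and the $t$-henselianity of $k$.

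The step I expect to be the main obstacle is precisely this last one --- proving $Nv_{N}\equiv k$ in the case $v_{N}\in H_{1}(N)$, which is really the whole content of (2). That $k$ and $Nv_{N}$ have the same existential theory (even $k\prec_{\exists}Nv_{N}$) comes for free; promoting this to full elementary equivalence is where the $t$-henselian structure of $k$ --- and possibly smallness once more --- has to be pushed through, via the saturation of the models. A secondary difficulty is the border case $[wL:pwL]=p$ in part (1): there one must either manufacture the required degree-$p^{2}$ separable extension of the residue field or reroute the argument through $p$-henselian valuations so as to land in the hypotheses of Theorem \ref{tame}.
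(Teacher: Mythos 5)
Your proposal follows the right general outline for part~(1) but leaves a genuine gap there, and the route you take for part~(2) diverges from the paper's and is incomplete at exactly the point you flag.

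For part~(1), the paper sidesteps the ``border case'' $[wL:pwL]=p$ entirely. Since $L\equiv k$ and $k$ is not henselian, $L$ admits no non-trivial $\emptyset$-definable henselian valuation; by \cite[Theorem 3.15]{JK0} this forces $L$ to be either real closed (in which case every henselian value group is divisible by \cite[4.3.6, 4.3.7]{EP}) or to have \emph{every} finite group as a subquotient of $G_L$, hence also of $G_{Lw}$ by \cite[Observation 3.16]{JK0}. In the second case $Lw$ always has a separable extension of degree divisible by $p^2$, so non-$p$-divisibility of $wL$ immediately makes $w$ tamely branching at $p$, with no remaining case to ``reroute'' through. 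Your suggested repairs (refining $w$, switching primes, using $v_L^p$) are not developed and at least the first two do not obviously terminate. Separately, your claim that the condition of Theorem~\ref{tame} is ``expressible by a single $\mathcal{L}_{\rm ring}$-sentence'' when $G_k$ is small is not justified; the paper instead invokes \cite[20.4.6]{FJ}, which gives $G_L\cong G_k$ directly from $L\equiv k$ and smallness, and then applies Theorem~\ref{tame} to $k$. Also, your treatment of the ``in particular'' clause via wild inertia is both unnecessary and not fully argued (the assertion about $L^\times/(L^\times)^p$ forcing a non-procyclic $p$-Sylow is left unsupported); the paper simply notes that $v_L$ restricts to a henselian, hence trivial, valuation on $k\prec L$, so $k$ embeds into $Lv_L$ and $\mathrm{char}(Lv_L)=0$.

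For part~(2), your plan is to take a saturated henselian $N\succ k$ and reduce to proving $Nv_N\equiv k$, which you acknowledge you cannot do in the case $v_N\in H_1(N)$. That reduction is indeed the hard way around, and the paper avoids it completely: it runs a chain of Ax--Kochen/Ershov equivalences using only that $v_LL$ is divisible,
$$
 k \equiv L \equiv Lv_L((v_LL)) \equiv Lv_L((v_LL\oplus\mathbb{Q}))\cong Lv_L((v_LL))((\mathbb{Q})) \equiv L((\mathbb{Q})) \equiv k((\mathbb{Q})),
$$
where the middle step uses that $v_LL\oplus\mathbb{Q}\equiv v_LL$ (both divisible, so both $\equiv\mathbb{Q}$), and the others use that elementarily equivalent residue fields and value groups yield elementarily equivalent henselian valued fields in equicharacteristic zero. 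No identification of $Lv_L$ with $k$ is needed at all, so the obstacle you anticipate simply does not arise.
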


\begin{proof} 
If $k$ is algebraically closed, then both (1) and (2) are clear. 
Otherwise, the assumption that $v_k$ is trivial implies that
$k$ is not henselian, which we assume now.
\begin{enumerate}
\item Take any $L \equiv k$ and let $w$ be a henselian valuation on $L$ with
$\mathrm{char}(Lw) = 0$. If $L$ is not henselian, then $w$ is trivial and $wL$
is divisible. Hence, we may assume that $L$ is henselian. 
Note that $L$ cannot admit a non-trivial $\emptyset$-definable henselian valuation since
otherwise $k$ would be henselian.
Thus, by \cite[Theorem 3.15]{JK0}, either $L$ is real closed or
every finite group occurs as a subquotient of $G_L$.
In case $L$ is real closed, $wL$ is divisible for any henselian
valuation on $L$ by \cite[4.3.6 and 4.3.7]{EP}. 
In case any finite group occurs as a
subquotient of $G_L$,
the same holds for $G_{Lw}$ (see 
\cite[Observation 3.16]{JK0}).
In particular, $Lw$ has a Galois extension of degree divisible by $p^2$
for every prime $p$.
Assume for a contradiction
that $wL$ is not $p$-divisible. Then $w$ is tamely branching at $p$,
so there is some $p$-Sylow subgroup $P$ of $G_L$ as in Theorem \ref{tame}.
As $G_k$ is small by assumption, 
we get $G_L \cong G_k$ by \cite[20.4.6]{FJ}, so, using Theorem \ref{tame} once more, 
$k$ also admits a non-trivial 
henselian valuation. This contradicts the assumption that $k$ is not henselian. 
Hence, $wL$ is divisible.

The last part now follows since for any $L \succ k$
the restriction of $v_L$ to $k$ is trivial, so we get $\mathrm{char}(Lv_L)=0$.
\item We now 
use Ax-Kochen/Ersov (\cite[4.6.4]{P2}) repeatedly. Note that the lexicographically ordered
direct
sum of two non-trivial
 divisible ordered abelian groups is divisible and hence
elementarily equivalent to $\mathbb{Q}$. 
Take again some $L \succ k$ henselian. 
As the value group of $v_L$ is divisible by the first part, 
we have
$$
 k \equiv L \equiv Lv_L((v_LL)) \equiv Lv_L((v_LL\oplus\mathbb{Q}))\cong Lv_L((v_LL))((\mathbb{Q})) \equiv L((\mathbb{Q})) \equiv k((\mathbb{Q})).
$$
\end{enumerate}
\end{proof}

The following Lemma is probably well-known:
\begin{Lemma} \label{Gamma}
Let $\Gamma$ be an ordered abelian group. The following are equivalent:
\begin{enumerate}
\item $\Gamma$ has a non-trivial $p$-divisible convex subgroup for every prime $p$.
\item $\Gamma$ is elementarily equivalent to an ordered abelian group $\Gamma'$ which has 
a non-trivial divisible convex subgroup.
\item $\Gamma \equiv \mathbb{Q} \oplus \Gamma$.
\item $\Gamma \equiv \mathbb{Q} \oplus \Delta$ for some ordered abelian
group $\Delta$.
\end{enumerate}
\end{Lemma}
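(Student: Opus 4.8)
The plan is to prove the cycle of implications $(1)\Rightarrow(3)\Rightarrow(4)\Rightarrow(2)\Rightarrow(1)$, so that every statement is sandwiched. Two of the arrows are essentially formal: $(3)\Rightarrow(4)$ is immediate (take $\Delta=\Gamma$), and $(4)\Rightarrow(2)$ holds because $\mathbb{Q}$ is a non-trivial divisible convex subgroup of the inverse lexicographic product $\mathbb{Q}\oplus\Delta$ (recall our convention that $\mathbb{Q}$ sits at the bottom), so $\Gamma'=\mathbb{Q}\oplus\Delta$ witnesses (2). The implication $(2)\Rightarrow(1)$ is also soft: having a non-trivial $p$-divisible convex subgroup, for each fixed $p$, is a first-order property of ordered abelian groups (it can be expressed by the scheme asserting, for suitable parameters, that an initial segment closed under the group operations is $p$-divisible — or more cleanly, one uses that $\Gamma'$ has a non-trivial divisible, hence $p$-divisible, convex subgroup, and elementary equivalence transfers this); since a divisible convex subgroup is in particular $p$-divisible for every $p$, and $\Gamma\equiv\Gamma'$, we conclude (1).

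The substantive implication is $(1)\Rightarrow(3)$, and this is where I expect the real work to lie. Here one wants to show that $\Gamma$ and $\mathbb{Q}\oplus\Gamma$ satisfy the same first-order sentences in the language of ordered abelian groups. The natural tool is the model theory of ordered abelian groups — either the classical quantifier elimination in an expanded language (Gurevich--Schmitt, or the treatment via spines/the $\mathrm{Ramakrishnan}$-style invariants), or a direct back-and-forth / Ehrenfeucht--Fra\"iss\'e argument. The point of hypothesis (1) is precisely that, after passing to an elementarily equivalent group, $\Gamma$ already contains a non-trivial divisible convex subgroup $D$ near the bottom; prepending another copy of $\mathbb{Q}$ below everything then does not change the relevant invariants, because a lexicographic sum $\mathbb{Q}\oplus D'$ of divisible groups is again divisible, and the "regular rank" / auxiliary sort data that control the theory are unaffected by replacing the bottom divisible piece $D$ by $\mathbb{Q}\oplus D$. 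Concretely, I would argue: $\Gamma\equiv\Gamma'$ where $\Gamma'$ has a non-trivial divisible convex subgroup $D$; write $\Gamma'$ as an extension encoding $D$ at the bottom and invoke that in the Gurevich--Schmitt invariants $\Gamma'$ and $\mathbb{Q}\oplus\Gamma'$ agree; hence $\Gamma\equiv\Gamma'\equiv\mathbb{Q}\oplus\Gamma'\equiv\mathbb{Q}\oplus\Gamma$, using for the last step that $\Gamma\equiv\Gamma'$ is preserved under forming $\mathbb{Q}\oplus(-)$ (which itself follows from the same quantifier-elimination, or from a Feferman--Vaught-type theorem for lexicographic sums).

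The main obstacle is therefore bookkeeping with the model theory of ordered abelian groups: making precise which invariants are being used and checking they are genuinely insensitive to prepending $\mathbb{Q}$ at the bottom, and that elementary equivalence is compatible with the operation $\Delta\mapsto\mathbb{Q}\oplus\Delta$. Since the lemma is stated as "probably well-known", I would keep this argument short, citing the Gurevich--Schmitt quantifier elimination (or the exposition in \cite{Mar} for the divisible case together with a Feferman--Vaught argument for lexicographic products) rather than reproving it, and emphasize only the two things that make it work: a divisible convex subgroup is $p$-divisible for all $p$, and a lexicographic sum of divisible groups is divisible.
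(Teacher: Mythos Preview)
Your proposal is essentially correct and close in spirit to the paper's proof, but the organization and the external inputs differ in ways worth noting. The paper runs the cycle $(1)\Rightarrow(2)\Rightarrow(3)\Rightarrow(4)\Rightarrow(1)$, and makes the step $(1)\Rightarrow(2)$ fully explicit via a compactness/type argument: writing $\gamma_p(x)$ for the formula asserting that every element of $[-x,x]$ is $p$-divisible, linearity of the convex subgroups gives that the type $\{\gamma_p(x):p\text{ prime}\}$ is finitely satisfiable, hence realized in a saturated $\Gamma'\succ\Gamma$. You smuggle exactly this move into your $(1)\Rightarrow(3)$ when you say ``after passing to an elementarily equivalent group, $\Gamma$ already contains a non-trivial divisible convex subgroup''; it would be cleaner to isolate it, as the paper does.

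For the remaining substantive step $(2)\Rightarrow(3)$, the paper avoids the Gurevich--Schmitt machinery entirely and instead cites two targeted results of Giraudet: first, that if $D$ is a convex subgroup of $\Gamma'$ with quotient $\Delta$ then $\Gamma'\equiv D\oplus\Delta$; second, that lexicographic products preserve elementary equivalence. Combining these with $D\equiv\mathbb{Q}\oplus D$ (since $D$ is divisible) gives $\Gamma\equiv\Gamma'\equiv D\oplus\Delta\equiv\mathbb{Q}\oplus D\oplus\Delta\equiv\mathbb{Q}\oplus\Gamma'\equiv\mathbb{Q}\oplus\Gamma$. These are precisely the two facts you flagged as needed (``splitting off the bottom divisible piece'' and ``$\Delta\mapsto\mathbb{Q}\oplus\Delta$ preserves $\equiv$''), so citing Giraudet is both sharper and lighter than invoking full quantifier elimination or a Feferman--Vaught theorem.
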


\begin{proof}
For a prime $p$, we consider the formula
$$
 \gamma_p(x) \;\equiv\; (x>0) \wedge 
\forall y\; (-x \leq y \leq x \longrightarrow \exists z \;pz=y).
$$
Then, in an ordered abelian group, the sentence
$$
 \phi_p \;\equiv\; \exists x\; \gamma_p(x) 
$$
axiomatizes the existence of a non-trivial 
$p$-divisible convex subgroup. 

(1) $\Rightarrow$ (2): Assume $\Gamma$ has a non-trivial $p$-divisible
convex subgroup for every prime $p$. Since the
convex subgroups of $\Gamma$ are ordered by inclusion, the type
$$
 q(x) = \{  \gamma_p(x) \;| \; p \textrm{ prime}\}
$$
is finitely satisfiable in $\Gamma$. Hence, it is realized in some 
sufficiently saturated $\Gamma' \succ \Gamma$. Now, $\Gamma'$ has a non-trivial
divisible convex subgroup.

(2) $\Rightarrow$ (3): Assume that $\Gamma \equiv \Gamma'$ and that
$\Gamma'$ has a non-trivial 
divisible convex subgroup $D$ with $\Gamma'/D=\Delta$. 
By \cite[bottom of p.~282]{Giraudet}, $\Gamma'\equiv D\oplus\Delta$.
Since $D$ is divisible, $D\equiv\mathbb{Q}\oplus D$.
Thus, since lexicographic products preserve elementary equivalence, cf.~\cite[proof of 3.3]{Giraudet}, we get that
$$
\Gamma \equiv \Gamma'\equiv 
D \oplus \Delta \equiv 
\mathbb{Q} \oplus D \oplus \Delta 
\equiv \mathbb{Q} \oplus \Gamma.
$$

(3) $\Rightarrow$ (4): Immediate. 

(4) $\Rightarrow$ (1): This is clear, since $\mathbb{Q}\oplus\Delta\models\phi_p$ for all primes $p$.
\end{proof}

We can now prove the following:
\begin{Theorem} \label{small}
Assume that $\mathrm{char}(Kv_K)=0$. 
If $v_K$ is $\emptyset$-definable, $Kv_K$ is $t$-henselian
and $G_{Kv_K}$ is small,
then $v_K$ is definable by an $\emptyset$-$\forall\exists$-formula.
\end{Theorem}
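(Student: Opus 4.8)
The plan is to reduce to the case that $F:=Kv_K$ is not separably closed and then to invoke Prestel's criterion (Theorem~\ref{P}). If $F$ is separably closed then $v_K\in H_2(K)$ and Corollary~\ref{MC} already gives the conclusion, so assume henceforth that $F$ is not separably closed. Then $v_K\in H_1(K)$, and since the residue field of a canonical henselian valuation is never henselian unless separably closed, $F$ is not henselian; it is $t$-henselian, of characteristic~$0$, and $G_F$ is small. Thus Proposition~\ref{kQ} applies with $k=F$, yielding $(\ast)$: every henselian valuation with residue characteristic~$0$ on a field elementarily equivalent to $F$ has divisible value group, and $(\ast\ast)$: $F\equiv F((\mathbb{Q}))$. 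Moreover, since $F$ is not henselian, Theorem~\ref{tame} shows that for no prime~$p$ does $G_F$ contain a non-procyclic $p$-Sylow subgroup of the shape forbidden there, and smallness of $G_F$ gives $G_{F'}\cong G_F$ for every $F'\equiv F$.

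Let $\phi$ be a parameter-free formula defining $v_K$ and put $\Sigma:=\mathrm{Th}_{\mathcal{L}_{\mathrm{ring}}\cup\{\mathcal{O}\}}(K,\mathcal{O}_{v_K})$. By Theorem~\ref{P} it suffices to show: whenever $L_1\prec_\exists L_2$ with $L_1\equiv L_2\equiv K$ -- writing $\mathcal{O}_{w_i}:=\phi(L_i)$, so that $(L_i,w_i)\equiv(K,v_K)$ -- one has $\mathcal{O}_{w_2}\cap L_1\subseteq\mathcal{O}_{w_1}$. Here each $w_i$ is henselian with $L_iw_i\equiv F$ of characteristic~$0$; hence $w_i\in H_1(L_i)$, so the canonical henselian valuation $v_{L_i}$ refines $w_i$; also $K$, and therefore $L_i$, is not separably closed, so the henselian valuations on $L_i$ form a chain. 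Now set $u:=w_2|_{L_1}$; since $L_1$ is relatively algebraically closed in $L_2$, Lemma~\ref{Lemhen} shows that $u$ is henselian on $L_1$ and that $L_1u$ is separably closed in $L_2w_2$. As $u$ and $w_1$ are comparable, it remains to exclude the case $\mathcal{O}_{w_1}\subsetneq\mathcal{O}_u$.

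Suppose then that $w_1$ properly refines $u$. The valuation $\bar w_1$ induced by $w_1$ on $L_1u$ is then non-trivial, and -- being the induced valuation of a henselian refinement of the henselian valuation $u$ -- henselian, with residue field $L_1w_1\equiv F$ of characteristic~$0$ and value group a non-trivial convex subgroup $\Delta$ of $w_1L_1\equiv v_KK$; in particular $L_1u$ is henselian, and since it is separably closed in $L_2w_2\equiv F$ its absolute Galois group is a small quotient of $G_{L_2w_2}\cong G_F$. From here the plan is to combine $(\ast)$, $(\ast\ast)$, Lemma~\ref{Gamma} and Theorem~\ref{tame} -- applied to $L_1u$, to $L_1w_1$, and to the henselian valuations induced by $v_{L_1}$ and by $v_{L_2}$ on the relevant residue fields -- to force $\Delta$ and the value groups of these induced valuations to be divisible, hence $L_1u\equiv F$; and then to reach a contradiction with ``$F$ not henselian'' out of the configuration ``$L_1u$ henselian, separably closed in $L_2w_2\equiv F$, with small absolute Galois group'': concretely, descending through residue fields and using that $L_1\prec_\exists L_2$ passes to them should produce, over a field elementarily equivalent to $F$, a henselian valuation with non-divisible value group, contradicting $(\ast)$. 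I expect this last step -- identifying precisely which configuration the smallness of $G_F$ rules out -- to be the main obstacle; the remainder (the reduction, the appeal to Theorem~\ref{P}, and the analysis of the restriction $u$) is routine.
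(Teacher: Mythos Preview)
Your setup is correct: the reduction to the $H_1$ case via Corollary~\ref{MC}, the appeal to Proposition~\ref{kQ}, the invocation of Theorem~\ref{P}, and the analysis of the restriction $u=w_2|_{L_1}$ all match the paper. But the final paragraph---which you rightly flag as the obstacle---is missing two key ideas, and the contradiction you sketch does not cohere.

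First, you never actually use the hypothesis that $v_K$ is $\emptyset$-definable beyond setting up $\Sigma$. The paper exploits it immediately: from $(\ast\ast)$ together with the fact that $\phi$ defines $v_K$ one deduces $v_KK\not\equiv\mathbb{Q}\oplus v_KK$ (otherwise Ax--Kochen/Ershov would exhibit two elementarily equivalent valued fields on which $\phi$ picks out different coarsenings). Via Lemma~\ref{Gamma} this is a first-order constraint on $v_KK$, hence on $w_1L_1$, forcing every non-trivial convex subgroup---in particular $\Delta$---to be \emph{non}-divisible. Your plan instead tries to force $\Delta$ divisible via $(\ast)$; but $(\ast)$ applies only to fields elementarily equivalent to $F$, and you do not know $L_1u\equiv F$. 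Nor does your envisioned endgame work: even granting $\Delta$ divisible and hence $L_1u\equiv F$, the fact that $L_1u$ is henselian contradicts nothing ($F$ is $t$-henselian), and no mechanism in your sketch produces a non-divisible value group over some field $\equiv F$.

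Second, the paper obtains its contradiction by showing that $\Delta$ \emph{is} divisible, via a Galois argument you gesture at but do not complete. Restriction and residue give surjections $G_{L_2w_2}\twoheadrightarrow G_{L_1u}\twoheadrightarrow G_{L_1w_1}$; the outer two groups are isomorphic to the small group $G_F$, so by the Hopfian property of small profinite groups \cite[16.10.6]{FJ} the composite---and hence each factor---is an isomorphism. Thus $G_{L_1u}\cong G_F$, whence $G_{F((\Delta))}\cong G_F$ since $L_1u\equiv F((\Delta))$ by Ax--Kochen/Ershov. Applying Hopfianness once more to the residue map $G_{F((\Delta))}\to G_F$ forces its kernel, the inertia group $I\cong\prod_p\mathbb{Z}_p^{\dim_{\mathbb{F}_p}(\Delta/p\Delta)}$, to be trivial; hence $\Delta$ is divisible, contradicting the first step. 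Invoking Theorem~\ref{tame} directly, as you propose, would require verifying the tamely-branching condition for $\bar w_1$ on $L_1u$, which needs control over degree-$p^2$ extensions of $L_1w_1$ that you do not have; the inertia-group route sidesteps this entirely.
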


\begin{proof} 
Let $\phi(x)$ be the $\mathcal{L}_\textrm{ring}$-formula defining $v_K$.
Note that $Kv_K \equiv Kv_K((\mathbb{Q}))$ holds by Proposition \ref{kQ}.
This implies that
\begin{align} 
 v_KK \not \equiv \mathbb{Q} \oplus v_KK, \label{oplus}
\end{align}
since otherwise, Ax-Kochen/Ersov (\cite[4.6.4]{P2}) gives that
$$
 (K,v_K) \equiv 
 (Kv_K((\mathbb{Q}))\underbrace{((v_KK))}_{u_1},u_1) 
\equiv (Kv_K\underbrace{((\mathbb{Q}))((v_KK))}_{u_2},u_2)
$$
contradicting that $\phi$ defines $v_K$.

\emph{Claim:} If $(L,v)\equiv(K,v_K)$, then $v$ is the coarsest henselian 
valuation on $L$ with $Lv \equiv Kv_K$. 

\emph{Proof of claim:} Assume that $w \supseteq v$ with $Lw \equiv Kv_K$. 
Then
$v$ induces a henselian valuation $\bar{v}$ on $Lw$ with residue field 
$(Lw)\bar{v}=Lv$. In particular, we have $\mathrm{char}((Lw)\bar{v})=0$. 
By Proposition \ref{kQ}, the value group $\bar{v}(Lw)$ of the induced
valuation, which is a convex subgroup of $vL$, is divisible. 
Since $vL\equiv v_KK$,
Lemma \ref{Gamma} together with (\ref{oplus}) above 
now imply that $\bar{v}(Lw)$ is trivial.
Thus we conclude $w=v$.

Take $(L_1,v_1)$, $(L_2,v_2) \equiv (K,v_K)$ with $L_1 \prec_\exists L_2$.
Let $w$ be the restriction of $v_2$ to $L_1$.
By Lemma \ref{Lemhen}, $w$ is henselian.
Note that $w$ and $v_1$ are comparable:
If $v_1 \in H_1(L_1)$ then $v_1$ is comparable to any henselian valuation on 
$L_1$ (cf.~section 2). In case we have $v_1 \in H_2(L_1)$, we get
-- using the Claim -- that $v_1$ is the coarsest henselian valuation on $L_1$
with algebraically closed residue field. Thus, we have $v_1=v_{L_1}$ and
so again $v_1$ is comparable to any henselian valuation on $L_1$.

Now, assume for a contradiction 
that $w$ is a 
proper 
coarsening of $v_1$.
Then $v_1$ induces a henselian valuation $\bar{v}_1$ on $L_1w$
with 
residue field $(L_1w)\bar{v}_1=L_1v_1\equiv Kv_K$ and
value group $\Delta:=\bar{v}_1(L_1w)$ a nontrivial convex subgroup of $v_1L_1$.
By Ax-Kochen/Ersov, $L_1w \equiv Kv_K((\Delta))$,
and the Claim gives that $L_1w \not\equiv Kv_K$.
By (\ref{oplus}) and Lemma \ref{Gamma}, $\Delta$ is not divisible.
Recall 
that $L_1w$ is relatively algebraically closed in 
$L_2v_2$ by Lemma \ref{Lemhen}. 
Thus, the restriction homomorphism
$$
 r:G_{L_2v_2} \longrightarrow G_{L_1w} 
$$
is surjective. By \cite[5.2.6]{EP}, the residue homomorphism induced by $\bar{v}_1$,
$$
 \pi:G_{L_1w} \longrightarrow G_{L_1v_1},
$$
is also surjective.
Since $G_{Kv_K}$ is small by assumption, we have
$G_{Kv_K}\cong G_{L_1v_1}\cong G_{L_2v_2}$ 
(see \cite[20.4.6]{FJ}),
so the epimorphism
$$
 \pi\circ r:G_{L_2v_2} \longrightarrow G_{L_1v_1}
$$
is actually an isomorphism (\cite[16.10.6]{FJ}), implying that both $r$ and $\pi$ are isomorphisms.
In particular,
$$
 G_{Kv_K} \cong G_{L_1w} \cong G_{Kv_K((\Delta))}.
$$
If $I$ denotes the inertia group of the power series valuation on 
$Kv_K((\Delta))$,
then reduction gives an homomorphism $\pi:G_{Kv_K((\Delta))}\rightarrow G_{Kv_K}$ with kernel $I$ (cf.~\cite[5.2.6]{EP}).
Since $G_{Kv_K}$ is small and $G_{Kv_K((\Delta))}\cong G_{Kv_K}$ this implies that $I=1$.
As $I=\prod_p\mathbb{Z}_p^{d_p}$ with $d_p={\rm dim}_{\mathbb{F}_p}(\Delta/p\Delta)$ (see \cite[5.3.3]{EP}), we conclude that $\Delta$ is divisible,
a contradiction.
Therefore, $w$ is a refinement of $v_1$,
so the claim follows from 
Theorem \ref{P}.
\end{proof}

Note that we construct fields which are $t$-henselian but not henselian with
small absolute Galois group in the last section. Thus, the hypotheses of the 
above Theorem is not empty. Overall, we can now combine several of our results
to get the following Theorem as stated in the introduction:
{
\renewcommand{\theTheorem}{\ref{main}}
\begin{Theorem}
Let $K$ be a field with canonical henselian valuation $v_K$ whose residue field $F=Kv_K$ has characteristic zero.
Assume that $v_K$ is $\emptyset$-definable.
\begin{enumerate}
\item If $F$ \underline{is not} elementarily equivalent to a henselian field, then $v_K$ is $\emptyset$-$\exists\forall$-definable.
\item If $F$ \underline{is} elementarily equivalent to a henselian field,
 then $v_K$ is $\emptyset$-$\forall\exists$-definable if the absolute Galois group $G_F$ of $F$ is a small profinite group.
\end{enumerate}
\end{Theorem}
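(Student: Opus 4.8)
The plan is to derive Theorem \ref{main} by combining the results already proven and organized around the trichotomy for the residue field $F=Kv_K$ of characteristic zero.

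\medskip

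\noindent\textbf{Part (1).} Suppose $F$ is not elementarily equivalent to any henselian field. Then in particular $F$ is not $t$-henselian. We distinguish two cases according to where $v_K$ lies in the tree of henselian valuations. If $v_K\in H_2(K)$, then $F=Kv_K$ is separably closed, hence trivially elementarily equivalent to a henselian field (indeed to $\mathbb{C}((\mathbb{Z}))$), contradicting our hypothesis; so this case does not occur. Therefore $v_K\in H_1(K)$, and since $F$ is not $t$-henselian, Proposition \ref{Prop:H1} applies directly and gives that $v_K$ is $\emptyset$-definable as such by an $\exists\forall$-formula. (Note this uses neither the $\emptyset$-definability hypothesis nor any assumption on $G_F$, matching the remark in the introduction.)

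\medskip

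\noindent\textbf{Part (2).} Now suppose $F$ is elementarily equivalent to a henselian field and $G_F$ is small. Again split according to $v_K\in H_1(K)$ or $v_K\in H_2(K)$. If $v_K\in H_2(K)$, then $\mathrm{char}(Kv_K)=0$ and $v_K\in H_2(K)$, so Corollary \ref{MC} immediately yields that $v_K$ is $\emptyset$-definable as such by an $\forall\exists$-formula; the smallness of $G_F$ is not even needed here (this covers case (2a), $F$ algebraically closed). If instead $v_K\in H_1(K)$, then $F$ being elementarily equivalent to a henselian field means precisely that $F$ is $t$-henselian. We are assuming $v_K$ is $\emptyset$-definable, $Kv_K$ is $t$-henselian, and $G_{Kv_K}$ is small, so Theorem \ref{small} applies directly and gives that $v_K$ is definable by an $\emptyset$-$\forall\exists$-formula. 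This completes case (2).

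\medskip

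\noindent The only subtlety worth spelling out is the reduction of "$F$ is (not) elementarily equivalent to a henselian field" to "$F$ is (not) $t$-henselian" together with a case split on $H_1$ versus $H_2$: a separably closed field is always elementarily equivalent to a henselian field, so the dividing line of the theorem genuinely sits at $t$-henselianity of the residue field rather than at separable closedness, which is exactly the point emphasized before the theorem statement. Since all the analytic content has been pushed into Corollary \ref{MC}, Proposition \ref{Prop:H1}, and Theorem \ref{small}, there is no remaining obstacle; the proof is a short case analysis citing these three results.
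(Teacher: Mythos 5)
Your proof is correct and follows essentially the same route as the paper, whose proof simply observes that case (1) is a special case of Proposition~\ref{Prop:H1} and case (2) is immediate from Theorem~\ref{small}; you merely spell out the implicit reduction showing $v_K\in H_1(K)$ in case (1) and add an unnecessary (but harmless) $H_1$/$H_2$ case split in case (2), where Theorem~\ref{small} already covers both subcases (smallness of $G_F$ being automatic when $G_F$ is trivial). One small factual slip: $\mathbb{C}((\mathbb{Z}))$ is not algebraically closed (e.g.\ $t$ has no square root), so it is not elementarily equivalent to a separably closed field of characteristic zero; you should instead take $\mathbb{C}((\mathbb{Q}))$, or simply observe that any algebraically closed field of characteristic zero carries a non-trivial (automatically henselian) valuation and hence is itself henselian. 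This does not affect the validity of the argument, since the main claim that separably closed fields of characteristic zero are elementarily equivalent to henselian fields remains true.
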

\begin{proof}
Let $K$ be a field with $\mathrm{char}(Kv_K)=0$ and assume that $v_K$ is
$\emptyset$-definable. Then, case (1) 
is a special case of
Proposition \ref{Prop:H1}.
Case (2) follows immediately from Theorem \ref{small}.
\end{proof}
}

\section{$t$-henselian non-henselian fields with small absolute Galois group}

We refine the construction sketched in \cite[p.~338]{PZ} of a $t$-henselian field which is neither henselian nor real closed.

\begin{Definition}
Let $n\in\mathbb{N}$. We say that a valued field $(K,v)$ is $n_\leq$-henselian if every
monic $f\in\mathcal{O}_v[T]$ of degree at most $n$ for which $\bar{f}\in Kv[T]$ has a simple zero $a\in Kv$
has a zero $x\in\mathcal{O}_v$ with $\bar{x}=a$.
\end{Definition}

Note that $(K,v)$ is henselian if and only if it is $n_\leq$-henselian for all $n$.

\begin{Lemma}\label{lem:composenhens}
Let $v_1$ be a valuation on $K$ and $v_2$ a valuation on $Kv_1$.
If both $v_1$ and $v_2$ are $n_\leq$-henselian, then so is the valuation $v=v_2\circ v_1$ on $K$.
\end{Lemma}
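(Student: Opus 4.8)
The plan is to run the standard ``composite of henselian valuations is henselian'' argument, but keeping track of degrees so that it already works for $n_\leq$-henselianity. Recall the description of the composite valuation $v=v_2\circ v_1$: its valuation ring $\mathcal{O}_v$ is the preimage of $\mathcal{O}_{v_2}$ under the residue map $\mathcal{O}_{v_1}\to Kv_1$, its residue field is $Kv=(Kv_1)v_2$, and the residue map $\mathcal{O}_v\to Kv$ is obtained by first reducing modulo $v_1$ (landing in $\mathcal{O}_{v_2}\subseteq Kv_1$) and then reducing modulo $v_2$.

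So I would fix a monic $f\in\mathcal{O}_v[T]$ of degree $d\leq n$ whose reduction $\bar f\in Kv[T]$ has a simple zero $c\in Kv$, and aim to produce $x\in\mathcal{O}_v$ with $f(x)=0$ and $\bar x=c$. Since $\mathcal{O}_v\subseteq\mathcal{O}_{v_1}$, the polynomial $f$ lies in $\mathcal{O}_{v_1}[T]$, and its reduction $g$ modulo $v_1$ is monic of degree $d$ with coefficients in $\mathcal{O}_{v_2}$ (being $v_1$-residues of elements of $\mathcal{O}_v$), and the reduction of $g$ modulo $v_2$ is exactly $\bar f$. Applying $n_\leq$-henselianity of $v_2$ to $g$ (legitimate since $\deg g=d\leq n$) yields $b\in\mathcal{O}_{v_2}$ with $g(b)=0$ and $v_2$-residue $c$.

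The one point that needs a small check is that $b$ is again a \emph{simple} zero of $g$, viewed now as a polynomial over $Kv_1$; this is needed before I can invoke $n_\leq$-henselianity of $v_1$. It holds because formal differentiation commutes with reduction, so the $v_2$-residue of $g'(b)$ equals $\bar f'(c)$, which is nonzero as $c$ is a simple zero of $\bar f$; hence $g'(b)\neq 0$. Now $b$ is a simple zero of $g$, the reduction of $f$ modulo $v_1$, and $f\in\mathcal{O}_{v_1}[T]$ is monic of degree $d\leq n$, so $n_\leq$-henselianity of $v_1$ gives $x\in\mathcal{O}_{v_1}$ with $f(x)=0$ and $v_1$-residue $b$. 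Since $b\in\mathcal{O}_{v_2}$, the description of $\mathcal{O}_v$ above forces $x\in\mathcal{O}_v$, and the $v$-residue of $x$ equals the $v_2$-residue of $b$, namely $c$. As $f$ was arbitrary, $(K,v)$ is $n_\leq$-henselian.

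I do not expect a genuine obstacle here; the argument is routine once the definition is unwound, and the only mild subtlety is the bookkeeping with the three residue maps together with the observation that the intermediate lift $b$ inherits simplicity from $c$.
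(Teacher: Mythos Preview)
Your argument is correct and follows exactly the two-step lifting sketched in the paper's proof: first lift the simple zero from $Kv$ to $Kv_1$ using $n_\leq$-henselianity of $v_2$, then from $Kv_1$ to $K$ using $n_\leq$-henselianity of $v_1$. You have in fact spelled out details (simplicity of the intermediate lift, and why the final lift lands in $\mathcal{O}_v$) that the paper leaves implicit.
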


\begin{proof}
Let $f\in\mathcal{O}_v[T]$ monic of degree at most $n$ such that $\bar{f}\in Kv[T]$ has a simple zero $a\in Kv$.
First lift $a$ to a zero $a'\in Kv_1$ of the reduction of $f$ with respect to $v_1$, and then further to a zero $x\in\mathcal{O}_v$ of $f$.
\end{proof}

\begin{Lemma} Let $(K,v)$ be a valued field.\label{lem:nhensextend}
 \label{lem:nchar}
\begin{enumerate}
\item
If every polynomial \label{lem:nchar1}
$$
 g=T^m+T^{m-1}+\sum_{i=0}^{m-2}a_iT^i\in K[T]
$$ 
with $m \leq n!$ and $a_0,\dots,a_{m-2}\in\mathfrak{m}_v$ has a 
zero $x\in\mathcal{O}_v$ with $x+1\in\mathfrak{m}_v$, then
$v$ extends uniquely to every Galois extension $N|K$ with $[N:K]\leq n$.
\item
If $v$ extends uniquely to every Galois extension $N|K$ with $[N:K]\leq n!$, then
$(K,v)$ is $n_{\leq}$-henselian.
\end{enumerate}
\end{Lemma}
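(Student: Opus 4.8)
\emph{Overview and a substitution lemma.} I would treat the two parts separately; both rely on the decomposition theory of extensions of $v$ to a Galois extension, together with the classical reduction of a Hensel situation to the normal form occurring in~(1). We may assume $v$ is non-trivial and $K$ infinite, the remaining cases being vacuous. The key preliminary observation for~(1) is that its hypothesis implies: \emph{every monic $f\in\mathcal O_v[T]$ of degree $d'\le n!$ whose reduction $\bar f$ has a simple zero $a\in Kv$ has a zero in $\mathcal O_v$ reducing to $a$.} Indeed, after translating $T\mapsto T+a'$ with $a'\in\mathcal O_v$ a lift of $a$ we may write $f=T^{d'}+c_{d'-1}T^{d'-1}+\dots+c_1T+c_0$ with $c_0\in\mathfrak m_v$ and $c_1\in\mathcal O_v^\times$. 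Putting $d_i:=c_ic_0^{i-1}c_1^{-i}$ for $2\le i\le d'$ one has $v(d_i)\ge(i-1)v(c_0)>0$, so $d_i\in\mathfrak m_v$, and $g(s):=s^{d'}+s^{d'-1}+\sum_{i=2}^{d'}d_is^{d'-i}$ is of the form in the hypothesis. From the identities $f(c_0u/c_1)=c_0\bigl(1+u+\sum_{i\ge2}d_iu^i\bigr)$ and $g(s)=s^{d'}\bigl(1+s^{-1}+\sum_{i\ge2}d_is^{-i}\bigr)$ it follows that a zero $s_0\in\mathcal O_v$ of $g$ with $s_0+1\in\mathfrak m_v$ — which exists by hypothesis — is a unit, whence $\theta_0:=c_0s_0^{-1}c_1^{-1}\in\mathcal O_v$ is a zero of $f$ (in the translated coordinates) reducing to $0$; translating back gives the claimed zero of the original $f$.

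\emph{Part (1).} Suppose $v$ does not extend uniquely to some Galois $N|K$ with $[N:K]\le n$; I would reach a contradiction. Fix an extension $w$ of $v$ to $N$; its decomposition group $D$ is a proper subgroup of $G:=\mathrm{Gal}(N|K)$, and for the decomposition field $Z:=N^D$ one has $2\le g:=[Z:K]=[G:D]\le n$, and $w$ is the unique extension to $N$ of $u_1:=w|_Z$. As the finitely many extensions of $v$ to $Z$ are pairwise incomparable, weak approximation yields $\gamma\in Z$ with $u_1(\gamma)>0$ and $u(\gamma-1)>0$ for every other extension $u$; such $\gamma$ is automatically integral over $\mathcal O_v$, and since the admissible $\gamma$ form a coset of an $\mathcal O_v$-submodule of $Z$ spanning $Z$ over $K$ — while the non-primitive elements lie in finitely many proper $K$-subspaces — we may take $\gamma$ to generate $Z|K$. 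Its minimal polynomial $\mu\in\mathcal O_v[T]$ is irreducible of degree $g$, and its $g$ conjugates lie in $N$. A conjugate $\sigma\gamma$ (with $\sigma$ running over coset representatives of $D$ in $G$) has residue $0$ in $Nw$ precisely when $(w\circ\sigma)|_Z=u_1$, that is — since $w$ is the unique extension of $u_1$ to $N$ — precisely when $w\circ\sigma=w$, i.e.\ when $\sigma\in D$; so exactly one conjugate has residue $0$, the other $g-1$ having residue $1$. Hence $\bar\mu=T(T-1)^{g-1}$, so $0$ is a simple zero of $\bar\mu$; but $\mu$ is irreducible of degree $g\ge2$ and thus has no zero in $\mathcal O_v$, contradicting the substitution lemma applied to $\mu$ (of degree $g\le n!$).

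\emph{Part (2).} Let $f\in\mathcal O_v[T]$ be monic of degree $d\le n$ with $\bar f$ having a simple zero $a$. Factoring $f$ into monic irreducibles over $K$ — which lie in $\mathcal O_v[T]$ as $\mathcal O_v$ is integrally closed — exactly one factor $p$ satisfies $\bar p(a)=0$, and a short computation with $f'$ shows $a$ is then a simple zero of $\bar p$ and that $p$ is separable. Let $N$ be the splitting field of $p$, Galois over $K$ of degree dividing $d!\le n!$; by hypothesis $v$ extends uniquely to $N$, say to $w$, so the integral closure of $\mathcal O_v$ in $N$ equals $\mathcal O_w$ and all roots of $p$ lie in $\mathcal O_w$. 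Exactly one root $\alpha$ reduces to $a$. Every $\sigma\in\mathrm{Gal}(N|K)$ fixes $w$ (by uniqueness) and hence induces an automorphism of $Nw$ fixing $Kv$; since the residue of $\sigma\alpha$ equals $\bar\sigma(a)=a$, uniqueness of $\alpha$ forces $\sigma\alpha=\alpha$. Thus $\alpha\in N^{\mathrm{Gal}(N|K)}=K$, so $\alpha\in K\cap\mathcal O_w=\mathcal O_v$ is a zero of $p$, hence of $f$, reducing to $a$; so $(K,v)$ is $n_\le$-henselian.

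\emph{Main obstacle.} The delicate point is finding the right polynomial in~(1). The obvious candidate — the characteristic polynomial over $K$ of an element of the integral closure of $\mathcal O_v$ in $N$ — has each of its residue-field zeros with multiplicity $|D|$, so it provides no simple zero once the decomposition group is non-trivial (e.g.\ in the presence of ramification or of a non-trivial residue extension). Working instead with a primitive element of the decomposition field $Z$, and exploiting that $w$ is the \emph{unique} extension of $u_1=w|_Z$ to $N$, is what forces multiplicity one. A secondary technicality is the existence of such a primitive element with prescribed residues, which follows from weak approximation for the pairwise incomparable extensions of $v$ to $Z$ together with the fact that a coset of an $\mathcal O_v$-submodule spanning $Z$ over the infinite field $K$ is not contained in a finite union of proper subfields.
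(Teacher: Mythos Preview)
Your argument is correct. For Part~(2) you do essentially what the paper does: pass to the splitting field of (an irreducible factor of) $f$, use uniqueness of the extension to make the Galois group act on residues, and conclude that the root lifting $a$ is fixed, hence lies in $K$. Your factoring step is a harmless variant of the paper's reduction to irreducible $f$ via Gauss' Lemma.

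For Part~(1) the paper simply invokes the proof of $(6)\Rightarrow(1)$ in \cite[4.1.3]{EP}, whereas you supply a full argument. Your route --- passing to the decomposition field $Z=N^{D}$ and taking the minimal polynomial of a primitive element with prescribed residues --- is the right way to handle the bounded-degree version, and your ``Main obstacle'' paragraph identifies precisely why: working directly in $N$ gives residue multiplicity $|D|$ rather than $1$. Two minor remarks. First, had you chosen the residues $(-1,0,\dots,0)$ rather than $(0,1,\dots,1)$, the minimal polynomial would reduce to $T^{g}+T^{g-1}$, and a single linear substitution $T\mapsto c_{g-1}T$ (with $c_{g-1}$ the unit $T^{g-1}$-coefficient) already brings it to the normal form in the hypothesis; this shortcuts your substitution lemma, though your version is of course valid. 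Second, the claim that the coset $\gamma_0+M$ avoids the finite union of proper intermediate fields deserves one more line: since $M\supseteq\mathfrak m_v B$ contains an $\mathcal O_v$-basis of $Z$ over $K$, one can parametrise a curve $t\mapsto\gamma_0+t\pi\theta$ in $S$ (with $\theta$ primitive, $\pi\in\mathfrak m_v\setminus\{0\}$, $t\in\mathcal O_v$) meeting each intermediate field in at most one point, and $\mathcal O_v$ is infinite.
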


\begin{proof} 
The proof follows by standard arguments.
Part (1) follows immediately from the proof of $(6)\Rightarrow(1)$ in 
\cite[4.1.3]{EP}.

Assume now that the assumption of (2) holds. 
Let $f\in\mathcal{O}_v[T]$ be monic of degree at most $n$ for which 
$\bar{f}\in Kv[T]$ has a simple zero $a\in Kv$. 
We may assume that $f$ is irreducible over $\mathcal{O}_v$, hence, 
by Gauss' Lemma \cite[4.1.2(1)]{EP}, also over $K$.
Consider the splitting field $L$ of $f$ over $K$.
Then $[L:K]\leq n!$,
so by assumption there is a unique extension $w$ of $v$ to $L$.
There are $a_1,\dotsc,a_n \in L$ with 
$f = \prod_{i=1}^{n} (T-a_i)$.
By Gauss' Lemma, $a_1,\dots,a_n\in\mathcal{O}_w$, 
and without loss of generality we can assume that $\bar{a}_1=a$. 
Suppose for a contradiction that we have $n>1$.
Then there is some $\sigma \in \mathrm{Gal}(L|K)$ with 
$\sigma(a_1)=a_2$.
As $w$ is the unique extension of $v$ to $L$, we have 
$\sigma(\mathcal{O}_w)= \mathcal{O}_w$.
Thus, $\sigma$ induces an automorphism 
$\bar{\sigma} \in
 \mathrm{Gal}(Lw|Kv)$ such that 
$\bar{a}_2=\bar{\sigma}(\bar{a}_1)=\bar{\sigma}(a)=a$
holds. This
contradicts the fact that $a$ is a simple zero of $\bar{f}$.
\end{proof}

We denote by $\mathbb{P}$ the set of prime numbers.

\begin{Lemma}\label{prop:construction}
Let $K_0$ be a field of characteristic zero that contains all roots of unity.
Let $n\in\mathbb{N}$, $n<q\in\mathbb{P}$ and $P\subseteq\mathbb{P}$.
Then there exists a valued field $(K_1,v)$ with the following properties:
\begin{enumerate}
\item $K_1v=K_0$ and $vK_1=\mathbb{Z}[\frac{1}{p}:p\in\mathbb{P}\setminus P]$
\item $v$ is $n_\leq$-henselian but not $q$-henselian.
\item $G_{K_1}=\left<H_1,H_2\right>$, where $H_1\cong\mathbb{Z}_q$ and there is $N\lhd H_2$ closed with 
 $N\cong\prod_{p\in P}\mathbb{Z}_p$ and $H_2/N\cong G_{K_0}$.
\end{enumerate}
\end{Lemma}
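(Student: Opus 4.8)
The plan is to build $(K_1,v)$ as a subfield of a fixed henselian valued field by a carefully controlled ``partial henselization''. Put $\Gamma:=\mathbb{Z}[\tfrac{1}{p}:p\in\mathbb{P}\setminus P]$ and $F_0:=K_0(t^\Gamma)$, with $v_0$ the restriction of the power series valuation, so $F_0v_0=K_0$ and $v_0F_0=\Gamma$. Fix a separable closure $F_0^{\mathrm{sep}}$, a henselization $H$ of $(F_0,v_0)$ inside it, and a principal unit $a_0\equiv 1\pmod{\mathfrak{m}}$ that is not a $q$-th power in $F_0$ (e.g.\ $a_0=1+t^{\gamma_0}$ for suitable $0<\gamma_0\in\Gamma$). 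One then defines $K_1$, with $F_0\subseteq K_1\subseteq H$, by a transfinite recursion with bookkeeping: adjoin the forced Hensel roots -- roots in $H$ of monic polynomials whose residue has a simple zero in $K_0$ -- but never adjoin a $q$-power root of $a_0$, and in fact run the recursion maximally subject to this one constraint, so that essentially only the tower $\{a_0^{1/q^\ell}\}_\ell$ is left out. Since all roots of unity lie in $K_0$ and all adjunctions take place inside the immediate extension $H/F_0$, the residue field stays $K_0$ and the value group stays $\Gamma$ throughout; this gives (1). (It may be cleaner to present $v$ as a composition $v=w_2\circ w_1$, using Lemma \ref{lem:composenhens} to preserve $n_\leq$-henselianity, separating the $\prod_{p\in P}\mathbb{Z}_p$-ramification encoded by $\Gamma$ from the $\mathbb{Z}_q$ that obstructs $q$-henselianity.)

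For (2): since $\zeta_q\in F_0$ and $a_0\notin(F_0^{\times})^q$, Kummer theory makes $X^q-a_0$ irreducible over any intermediate field not containing $a_0^{1/q}$; as every adjunction has degree $<q$, an induction gives $a_0\notin(K_1^{\times})^q$, so $a_0^{1/q}\notin K_1$ and $K_1\subsetneq H$ is not henselian. Moreover $X^q-a_0$ splits in $K_1(q)$ (its splitting field $K_1(a_0^{1/q})$ is cyclic of degree $q$ over $K_1$) and has the simple residue zero $1\in K_1v$ but no lift in $\mathcal{O}_v\cap K_1$, so $v$ is not $q$-henselian by Proposition \ref{phenseq}. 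For $n_\leq$-henselianity (here $n<q$ is essential): a monic $f\in\mathcal{O}_v[T]$ of degree $\le n$ with $\bar f$ having a simple zero in $K_1v=K_0$ has its Hensel root in $H$, of degree $\le n<q$ over $K_1$; since every element of $H$ deliberately omitted generates an extension of $q$-power degree $\ge q$ over $K_1$, that root was adjoined, hence lies in $K_1$. (One may instead verify the polynomial criterion of Lemma \ref{lem:nchar}(1) for $v$ -- the polynomials there have a simple residue zero at $-1$ whose Hensel lift is never a $q$-power root of $a_0$ -- and then apply Lemma \ref{lem:nchar}(2).)

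For (3): the henselization $K_1^h$ is an immediate extension, so $K_1^hv=K_0$ and $vK_1^h=\Gamma$; as $\mathrm{char}\,K_0=0$ and all roots of unity lie in $K_0$, the structure theory of henselian valued fields (\cite[5.2.6 and 5.3.3]{EP}) gives a short exact sequence $1\to I\to G_{K_1^h}\to G_{K_0}\to 1$ with $I\cong\prod_p\mathbb{Z}_p^{d_p}$, $d_p=\dim_{\mathbb{F}_p}(\Gamma/p\Gamma)$, which is $1$ for $p\in P$ and $0$ otherwise. Hence $H_2:=G_{K_1^h}$, viewed inside $G_{K_1}$ as the decomposition group of $v$, has the closed normal subgroup $N:=I\cong\prod_{p\in P}\mathbb{Z}_p$ with $H_2/N\cong G_{K_0}$, so the group-theoretic description of $H_2$ is automatic. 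The tower $K_1':=K_1(a_0^{1/q^\infty})\subseteq K_1^h$ is a $\mathbb{Z}_q$-extension of $K_1$, giving an epimorphism $G_{K_1}\twoheadrightarrow\mathbb{Z}_q$; since $\mathbb{Z}_q$ is a projective profinite group this splits, producing a closed subgroup $H_1\cong\mathbb{Z}_q\le G_{K_1}$. It remains to prove $G_{K_1}=\langle H_1,H_2\rangle$. Writing $D_1$ for the closed subgroup of $G_{K_1}$ generated by the $H_1$-conjugates of $H_2$, we have $D_1\subseteq\ker(G_{K_1}\to\mathbb{Z}_q)$ and $\langle H_1,H_2\rangle\supseteq\langle H_1,D_1\rangle$; so it suffices to show $D_1=\ker(G_{K_1}\to\mathbb{Z}_q)$ (then, as this kernel is normal with cyclic quotient hit by $H_1$, one gets $\langle H_1,D_1\rangle=G_{K_1}$). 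This amounts to $\bigcap_{h\in H_1}h(K_1^h)=K_1'$, which the maximality of the construction yields by showing the only completely split extensions of $K_1$ lie in $K_1'$.

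The main obstacle is precisely running the recursion so that the Galois group comes out as $\langle H_1,H_2\rangle$ on the nose. One must simultaneously (i) adjoin every low-degree forced Hensel root for $n_\leq$-henselianity, (ii) adjoin all the remaining completely split data -- roots of principal units of order prime to $q$, and $q$-power roots of those principal units lying in the subgroup generated by the $a_0^{1/q^\ell}$ modulo $q$-th powers -- so that the group classifying completely split extensions of $K_1$ collapses to the single procyclic group generated by $a_0$, while (iii) never touching $a_0$, so that $a_0^{1/q}\notin K_1$. Keeping (i)--(iii) consistent throughout the transfinite recursion, with the residue field and value group frozen, is the technical heart of the proof; the remaining ingredients -- Kummer theory, the criteria of Proposition \ref{phenseq} and Lemma \ref{lem:nchar}, and the cited structure theory of henselian fields -- are routine.
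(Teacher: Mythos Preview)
Your route is genuinely different from the paper's and mostly workable, but part (3) is left as an unfulfilled promise. The paper avoids any recursion inside the henselization and instead works top-down via Galois theory: with $F=K_0(x^\Gamma)$ and $\alpha$ a root of $T^q-(x+1)$ in $F^h$, it picks an element $\sigma$ in a $q$-Sylow of $G_F$ whose restriction generates ${\rm Gal}(F(\alpha)|F)\cong C_q$, observes that $\langle\sigma\rangle\cong\mathbb{Z}_q$ (procyclic, pro-$q$, and torsion-free as the absolute Galois group of a non-real field), sets $E=(F^{\rm sep})^{\langle\sigma\rangle}$, and then \emph{defines} $K_1:=E\cap F^h$. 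With this definition, $G_{K_1}=\langle G_E,G_{F^h}\rangle$ is immediate from the Galois correspondence, so (3) comes for free; non-$q$-henselianity holds since $\alpha\in F^h\setminus K_1$ generates a $C_q$-extension, and $n_\leq$-henselianity holds since any Hensel root $\beta\in F^h$ of degree $\le n<q$ over $K_1$ satisfies $[E(\beta):E]<q$, which forces $\beta\in E$ (as $G_E\cong\mathbb{Z}_q$ has no proper open subgroup of index $<q$), hence $\beta\in E\cap F^h=K_1$.

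Your bottom-up construction (take $K_1$ maximal in $H$ with $a_0^{1/q}\notin K_1$) can also be made to yield (3), but not via the $D_1$/completely-split detour you sketch and then abandon. Drop the ``transfinite recursion with bookkeeping'' in favor of plain Zorn maximality, and argue directly that $E\cap H=K_1$, where $E$ is the fixed field of your section $H_1$: if $\gamma\in(E\cap H)\setminus K_1$, maximality forces $a_0^{1/q}\in K_1(\gamma)\subseteq E$; since also $a_0^{1/q}\in K_1'$ and $E\cap K_1'=K_1$ (precisely because $H_1$ is a section of $G_{K_1}\twoheadrightarrow{\rm Gal}(K_1'/K_1)$), this contradicts $a_0^{1/q}\notin K_1$. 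So your idea is sound once stated this way; but the paper's Galois-theoretic shortcut replaces both the maximality argument and the splitting-via-projectivity step by a single choice of $\sigma$, which is considerably tidier.
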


\begin{proof}
Let $\Gamma=\mathbb{Z}[\frac{1}{p}:p\in\mathbb{P}\setminus P]$, $F_0=K_0(x)$, $F=K_0(x^\Gamma)\subseteq K_0((x^\Gamma))$,
and $F^h=F^{\rm alg}\cap K_0((x^\Gamma))$.
On all subfields of $K_0((x^\Gamma))$ we denote the restriction of the $x$-adic power series valuation by $v$.
Then $K_1v=K_0$ for all $K_0\subseteq K_1\subseteq K_0((x^\Gamma))$ and $vK_1=\Gamma$ for all $F\subseteq K_1\subseteq K_0((x^\Gamma))$.

Let 
$$
 f(T)=T^q-(x+1)\in F_0[T].
$$ 
Since $\bar{f}=T^q-1$ is separable and completely decomposes over $F_0v=K_0$, 
$f$ has a zero $\alpha\in F^h\subseteq K_0((x^\Gamma))$ by Hensel's Lemma.
Since $f$ is irreducible over $F_0$, $F_0(\alpha)|F_0$ is a $C_q$-extension.
The fact that $v(x+1)=0$ implies that $v$ does not ramify in this extension 
\cite[2.3.8]{FJ}.
Since $v$ is totally ramified in $F|F_0$ but unramified in $F_0(\alpha)|F_0$,  these extensions are linearly disjoint over $F_0$, cf.\ \cite[2.5.8]{FJ}, 
hence also $F(\alpha)|F$ is a $C_q$-extension, cf.\ \cite[2.5.2]{FJ}.

Now let ${\rm res}:G_F\rightarrow{\rm Gal}(F(\alpha)|F)$ be the restriction homomorphism and let $Q\leq G_F$ be a $q$-Sylow subgroup.
Then ${\rm res}(Q)$ is a $q$-Sylow subgroup of ${\rm Gal}(F(\alpha)|F)\cong C_q$, so there exists $\sigma\in Q$ with $\left<{\rm res}(\sigma)\right>={\rm Gal}(F(\alpha)|F)$.
The procyclic group $G_q:=\left<\sigma\right>$ is torsion-free since it is the absolute Galois group of a non-real field,
and pro-$q$ as a subgroup of $Q$,
hence $G_q\cong\mathbb{Z}_q$, cf.\ \cite[Ch. 1 Exercise 7]{FJ}.

Let $E$ denote the fixed field of $G_q$ and $K_1=E\cap F^h$.
Then $G_{K_1}=\left<G_q,G_{F^h}\right>$.
By \cite[5.3.3]{EP}, the absolute inertia group $I_v$ of the valuation on $F^h$ satisfies
$$
 I_v \cong \prod_{p\in\mathbb{P}}\mathbb{Z}_p^{\dim_{\mathbb{F}_p}(\Gamma/p\Gamma)}=\prod_{p\in P}\mathbb{Z}_p,
$$
and $G_{F^h}/I_v\cong G_{F^hv}=G_{K_0}$.

Since $E\cap F(\alpha)=F$, $K_1(\alpha)$ is a $C_q$-extension of $K_1$ contained in $F^h$, so $(K_1,v)$ is not $q$-henselian, cf.\ \cite[Proposition 1.2(iv)]{Koe}.
If $g\in(\mathcal{O}_v\cap K_1)[T]$ is monic of degree at most $n$ and $\bar{g}\in K_1v[T]$ has a simple zero $a\in K_1v$,
then $g$ has a zero $\beta$ in $F^h$ with $\bar{\beta}=a$ by Hensel's Lemma.
Since $[E(\beta):E]\leq{\rm deg}(g)\leq n<q$ and $G_E\cong\mathbb{Z}_q$, we conclude that $\beta\in E\cap F^h=K_1$,
so $(K_1,v)$ is indeed $n_\leq$-henselian.
\end{proof}

\begin{Construction}\label{Con}
Fix a prime number $p_0$ and let $K_0=\mathbb{C}$. 
For $n=1,2,\dots$ choose a prime number $q_n>\max\{n,p_0\}$ and iteratively use Lemma \ref{prop:construction} (with $P=\emptyset$) to construct a valued field $(K_n,v_n)$ with
$v_nK_n=\mathbb{Q}$, $K_nv_n=K_{n-1}$, $K_n$ $n_\leq$-henselian but not $q_n$-henselian,
and $G_{K_n}=\left<G_{K_n}',G_{K_n}''\right>$ with $G_{K_n}'\cong\mathbb{Z}_{q_n}$, $G_{K_n}''\cong G_{K_{n-1}}$.
By induction, $G_{K_n}$ is finitely generated, in particular small.

For each $n\geq m$, composition of places gives a valuation $v_{n,m}=v_{m+1}\circ\dots\circ v_n$ on $K_n$ with residue field $K_nv_{n,m}=K_m$.
Since $v_nK_n$ is divisible and the class of divisible ordered abelian groups is closed under extensions,
induction shows that $v_{n,0}K_n$ is divisible
for all $n$.

The residue homomorphism $\mathcal{O}_{v_{n,m}}\rightarrow K_m$ of $v_{n,m}$ restricts to a homomorphism $\mathcal{O}_{v_{n,0}}\rightarrow\mathcal{O}_{v_{m,0}}$.
With respect to these homomorphisms, the $\mathcal{O}_{v_{n,0}}$ form an inverse system.
The inverse limit $\mathcal{O}=\varprojlim_n\mathcal{O}_{v_{n,0}}$ is again a valuation ring, cf.\ \cite[Lemma 3.5]{FehmParan}.
Let $K={\rm Quot}(\mathcal{O})$ and $v$ a valuation such that $\mathcal{O}=\mathcal{O}_v$.
For each $n$, let $\mathfrak{p}_n$ denote the kernel of the natural projection $\mathcal{O}\rightarrow\mathcal{O}_{v_{n,0}}$
and let $v_n^*$ be a valuation on $K$ with $\mathcal{O}_{v_n^*}=\mathcal{O}_{\mathfrak{p}_n}$.
Note that $\bigcap_n\mathfrak{p}_n=(0)$, hence $\bigcup_n\mathcal{O}_{v_n^*}=\mathcal{O}_{\bigcap_n\mathfrak{p}_n}=\mathcal{O}_{(0)}=K$
and 
$$
 vK=K^\times/\mathcal{O}_v^\times=\bigcup_n\mathcal{O}_{v_n^*}^\times/\mathcal{O}_v^\times.
$$ 
As $\mathcal{O}_{v_n^*}^\times/\mathcal{O}_v^\times\cong K_n^\times/\mathcal{O}_{v_{n,0}}^\times=v_{n,0}K_n$,
we see that $vK$ is divisible.
\end{Construction}

\begin{Lemma}\label{lem:finitequotients}
Let $G$ be a profinite group generated by closed subgroups $G_0$, $G_1$.
If $G_0$ is pro-$q$ and $A$ is a finite group with $q\nmid\#A$ which is a quotient of $G$,
then $A$ is also a quotient of $G_1$.
\end{Lemma}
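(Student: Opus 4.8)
The plan is to take an arbitrary continuous surjection $\pi\colon G\twoheadrightarrow A$ and show directly that its restriction to $G_1$ is already onto. The crucial first step is to analyse $\pi(G_0)$: since $G_0$ is pro-$q$, its continuous image $\pi(G_0)$ is again pro-$q$ (every open normal subgroup of $\pi(G_0)$ pulls back to an open normal subgroup of $G_0$, so all finite quotients of $\pi(G_0)$ are $q$-groups). But $\pi(G_0)$ is a subgroup of the finite group $A$, hence a finite $q$-group, so $\#\pi(G_0)$ is simultaneously a power of $q$ and a divisor of $\#A$. As $q\nmid\#A$ by hypothesis, this forces $\pi(G_0)=1$.

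Next I would exploit that $G$ is topologically generated by $G_0$ and $G_1$, i.e.\ $G=\overline{\langle G_0\cup G_1\rangle}$. Since $G$ is compact and $\pi$ is continuous, $\pi$ maps the closure of a set onto the closure of its image, so
\[
 A=\pi(G)=\overline{\langle\,\pi(G_0),\pi(G_1)\,\rangle}=\overline{\langle\,\pi(G_1)\,\rangle}=\overline{\pi(G_1)},
\]
using $\pi(G_0)=1$ in the middle step and that $\langle\pi(G_1)\rangle=\pi(G_1)$ because $\pi(G_1)$ is already a subgroup. Finally, $\pi(G_1)$ is a subgroup of the finite (hence discrete) group $A$, so it is closed, giving $\pi(G_1)=\overline{\pi(G_1)}=A$. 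Thus $A$ is a quotient of $G_1$.

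I do not expect a genuine obstacle here; the argument is a short exercise in elementary profinite group theory. The only two points that deserve a line of justification are that a continuous image of a pro-$q$ group is pro-$q$, and that "generated by closed subgroups" is to be understood in the topological sense so that one may freely pass to images under $\pi$ and take closures — both being standard facts about profinite groups, e.g.\ as in \cite[Ch. 1]{FJ}.
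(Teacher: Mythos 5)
Your proof is correct and takes essentially the same route as the paper: both establish that $\pi(G_0)$ is a $q$-group inside $A$ and hence trivial (equivalently $G_0\subseteq\ker\pi$), and then conclude that $G_1$ already surjects onto $A$; the paper phrases the last step via $G=\langle G_1,\ker\pi\rangle$ while you argue directly with images and closures, but these are the same observation.
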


\begin{proof}
Let $\pi:G\rightarrow A$ be an epimorphism. Then $\pi(G_0)$ is a $q$-group, so $G_0\subseteq{\rm ker}(\pi)$ since $q\nmid\#A$.
In particular, $G=\left<G_1,{\rm ker}(\pi)\right>$,
so the inclusion $G_1\rightarrow G$ induces an epimorphism $G_1\rightarrow G/{\rm ker}(\pi)\cong A$.
\end{proof}

\begin{Proposition} \label{Prop:small}
The field $K$ of Construction \ref{Con} is $t$-henselian but not henselian,
$G_K$ is small and $K(p_0)=K$.
\end{Proposition}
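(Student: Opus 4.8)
The plan is to verify the four assertions --- $t$-henselian, not henselian, $G_K$ small, and $K(p_0)=K$ --- by transferring the corresponding finite-level facts about the $K_n$ through the inverse limit construction.

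First I would establish that $G_K$ is small. The key point is to identify $G_K$ in terms of the $G_{K_n}$. The valuation rings $\mathcal{O}_{v_n^*}$ exhaust $K$ and each has residue field $K_n$ (via $\mathcal{O}_{v_n^*}/\mathfrak{p}_n\cong\mathcal{O}_{v_{n,0}}$, whose residue field is $K_n$), while the valuation $v_n^*$ itself is trivial-to-$v$ on the relevant piece; more precisely, $\mathcal{O}/\mathfrak{p}_n = \mathcal{O}_{v_{n,0}}$ sits inside $K_n$ and its fraction field is $K_n$, so $K$ is a "pro-$(K_n)$" field in the sense that $K$ carries a valuation $v_n^*$ with residue field $K_n$ and $\bigcap_n\mathcal O_{v_n^*}=\mathcal O_v$, $\bigcup_n \mathcal O_{v_n^*}=K$. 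I would then argue that a finite Galois extension of $K$ of degree $\leq m$ must, for $n$ large, already be "visible" at level $n$: since $K_n$ is $n_\leq$-henselian (hence $m_\leq$-henselian for $n\geq m$) and the residue field is $K_n$, unique-enough lifting forces the extension to come from a Galois extension of $K_n$ of degree $\leq m$. As each $G_{K_n}$ is finitely generated, there are only finitely many such, uniformly in $n$; hence $K$ has only finitely many Galois extensions of each degree, i.e.\ $G_K$ is small. This inverse-limit bookkeeping --- making "visible at level $n$" precise and checking that the finite extensions stabilize --- is the step I expect to be the main obstacle, and it is where the $n_\leq$-henselianity of the $K_n$ (Lemma \ref{prop:construction}(2)) is essential.

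Next, $t$-henselianity: by the above, $(K,v_n^*)$ (or rather the pair $(K, \mathcal{O}_{v_n^*})$) is $m_\leq$-henselian for every $m\leq n$, and using the explicit polynomial criterion of Lemma \ref{lem:nchar}(1), $m_\leq$-henselianity for all $m$ (equivalently, the topology defined by $v$) is finitely axiomatizable in each finite fragment; a sufficiently saturated elementary extension of $K$ will be genuinely henselian, so $K$ is $t$-henselian. (Alternatively: $v$ induces a $t$-henselian topology since each finite chunk of Hensel's lemma holds, which is exactly the axiom scheme characterization of $t$-henselian fields cited from \cite{PZ}.) Then, $K$ is \emph{not} henselian: each $K_n$ fails to be $q_n$-henselian by Lemma \ref{prop:construction}(2), and I would show this obstruction survives in $K$ --- the $C_{q_n}$-extension witnessing non-$q_n$-henselianity of $K_n$ pulls back along $\mathcal{O}\to\mathcal{O}_{v_{n,0}}$ to a $C_{q_n}$-extension of $K$ over which $v$ does not extend uniquely (using $v_n^*K$ and the residue identification), so $v$ is not henselian; and since the $q_n\to\infty$, no $n_\leq$-henselian issue is available to rescue it either. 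Because $v$ is the only candidate (by construction $K$ has a canonical place to $\mathbb{C}$ with divisible value group, and any henselian valuation would have to be comparable to it), $K$ admits no non-trivial henselian valuation.

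Finally, $K(p_0)=K$: this says $K$ has no Galois extension of $p_0$-power degree. By the small-ness argument, any such extension would be visible at some level $n$, i.e.\ would correspond to a $p_0$-power Galois extension of $K_n$ lifting through the residue map. But $q_n>p_0$ was chosen at each stage, and the only "new" part of $G_{K_n}$ over $G_{K_{n-1}}$ is a $\mathbb{Z}_{q_n}$-factor; by Lemma \ref{lem:finitequotients} (applied with $q=q_n$, which does not divide a $p_0$-power since $q_n\neq p_0$), any finite quotient of $p_0$-power order of $G_{K_n}=\langle G_{K_n}', G_{K_n}''\rangle$ already factors through $G_{K_n}''\cong G_{K_{n-1}}$. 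Descending inductively down to $G_{K_0}=G_{\mathbb{C}}=1$, we conclude there is no non-trivial $p_0$-power quotient of any $G_{K_n}$, hence none of $G_K$, so $K(p_0)=K$. The role of Lemma \ref{lem:finitequotients} is precisely to peel off the $\mathbb{Z}_{q_n}$ layers one at a time while preserving the (non-)existence of $p_0$-power quotients.
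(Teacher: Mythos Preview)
Your overall architecture matches the paper's, but two steps contain genuine gaps.

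\textbf{Smallness of $G_K$.} Your claim ``as each $G_{K_n}$ is finitely generated, there are only finitely many such, \emph{uniformly in $n$}'' is the crux, and as written it is not justified: the number of generators of $G_{K_n}$ grows with $n$ (each step adjoins a new $\mathbb{Z}_{q_n}$), so the bound on the number of degree-$d$ quotients of $G_{K_n}$ is \emph{not} a priori uniform. The paper fixes this exactly by the descent you later use for $K(p_0)=K$: once a degree-$d$ Galois extension $M|K$ is realised as a quotient $A$ of $G_{K_n}$ (via $(l_n)_\leq$-henselianity of $v_n^*$, divisibility of $v_n^*K$, and the fundamental equality), one applies Lemma~\ref{lem:finitequotients} successively for $m=n,n-1,\dots,d$ (using $q_m>m\geq d$, hence $q_m\nmid\#A$) to push $A$ down to a quotient of the \emph{fixed} group $G_{K_d}$. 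That is what gives uniformity. You should run this same peeling argument here, not only for $K(p_0)=K$.

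\textbf{Non-henselianity.} Showing that $v$ itself is not henselian is not enough; you must rule out every henselian $w$. Your assertion ``any henselian valuation would have to be comparable to $v$'' is not justified: $v$ is not henselian, so F.~K.~Schmidt does not apply directly. The paper's argument is that a henselian $w$ induces the unique $t$-henselian topology on $K$ (\cite[Theorem~7.9]{PZ}), which is also induced by each $v_n^*$; since $\bigcup_n\mathcal{O}_{v_n^*}=K$, \cite[2.3.5]{EP} forces $\mathcal{O}_w\subseteq\mathcal{O}_{v_n^*}$ for some $n$, whence $v_n^*$ is henselian, and then the induced valuation on $Kv_{n+1}^*=K_{n+1}$ is $v_{n+1}$, contradicting non-$q_{n+1}$-henselianity. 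Also note that the $(l_n)_\leq$-henselianity of $v_n^*$ (which you assume ``by the above'') is itself a nontrivial inverse-limit computation: one lifts the simple zero $-1$ of $T^m+T^{m-1}$ through the $v_{k,n}$ (using Lemma~\ref{lem:composenhens}) to a compatible system in $\mathcal{O}=\varprojlim\mathcal{O}_{v_{k,0}}$, and then invokes Lemma~\ref{lem:nchar}.
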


\begin{proof}
For $n\in\mathbb{N}$ let $l_n := \mathrm{max}\{ l \in \mathbb{N}\,:\, l! \leq n\}$. 
Observe that each $v_n^*$ is $(l_n)_\leq$-henselian: 
By Lemma \ref{lem:nchar} it suffices to show that each
$$
 g=T^m+T^{m-1}+\sum_{i=0}^{m-2}a_iT^i\in K[T]
$$ 
with $m\leq n$ and $a_i\in\mathfrak{m}_{v_n^*}$ for $i=0,\dots,m-2$ has a zero $x$ in $\mathcal{O}_{v_n^*}$ with $x+1\in\mathfrak{m}_{v_n^*}$.
For $k\in\mathbb{N}$ let $g_k$ denote the reduction of $g$ with respect to $v_k^*$.
If $k\geq n$, then the reduction of $g_k$ with respect to $v_{k,n}$ is $g_n=T^m+T^{m-1}$,
so since $v_{k,n}$ is $n_\leq$-henselian by Lemma \ref{lem:composenhens}, 
the simple zero $x_n=-1$ uniquely lifts to a zero $x_k\in\mathcal{O}_{v_{k,n}}$ of $g_k$.
Since
$x_n\in\mathcal{O}_{v_{n,0}}$, also $x_k\in\mathcal{O}_{v_{k,0}}$. 
Therefore, $x=(x_k)_k\in\mathcal{O}$ satisfies
$g(x)\in\bigcap_k\mathfrak{p}_k=(0)$ and $x+1\in\mathfrak{m}_{v_n^*}$.
This concludes the proof that $v_n^*$ is $(l_n)_\leq$-henselian.

As $l_n\rightarrow\infty$ for $n\rightarrow\infty$, 
\cite[Theorem 7.2]{PZ} implies that
the topology induced by each of the $v_n^*$ (for $n>1$) 
on $K$ is $t$-henselian.

However, $K$ is not henselian:
Suppose that $w$ is a non-trivial henselian valuation on $K$.
Since the topology induced on $K$ by $w$ coincides with the $t$-henselian topology induced by each of the $v_n^*$ (\cite[Theorem 7.9]{PZ}),
and the valuation ring $\bigcup_n\mathcal{O}_{v_n^*}=K$ is trivial,
\cite[2.3.5]{EP} implies that there is some $n$ with $\mathfrak{m}_{v_n^*}\subseteq\mathfrak{m}_w$,
i.e.~$\mathcal{O}_w\subseteq\mathcal{O}_{v_n^*}$. In particular, $v_n^*$ is henselian.
This implies that also the valuation induced by $v_n^*$ on $Kv_{n+1}^*=K_{n+1}$ is henselian,
but this valuation is exactly $v_{n+1}$, which is not $q_{n+1}$-henselian by construction.

We claim that $G_K$ is small: Indeed, otherwise there exist infinitely many distinct extensions $L_1,L_2,\dots$ of $K$ of the same degree $d$.
Without loss of generality we may assume that all $L_i|K$ are Galois.
Fix $k\in\mathbb{N}$ and let $M_k=L_1\cdots L_k$ be the compositum.
Then $A_k:={\rm Gal}(M_k|K)$ is a subgroup of $\prod_{i=1}^k{\rm Gal}(L_i|K)$, so $\#A_k|d^k$.
Choose $n$ with $l_n\geq \max\{|A_k|,d\}$. Since $v_n^*$ is 
$(l_n)_\leq$-henselian, it extends uniquely to $M_k$ by Lemma~\ref{lem:nhensextend}.
Since $v_n^*K$ is divisible, this extension is unramified, hence the fundamental equality \cite[3.3.3]{EP} gives that
${\rm Gal}(M_k|K)\cong{\rm Gal}(M_kv_n^*|K_n)$. 
In particular, $A_k$ is a quotient of $G_{K_n}$.
For all $m=d,\dots,n$ we have that $q_m>m\geq d$,
hence $q_m\nmid\#A_k$, so
Lemma \ref{lem:finitequotients} shows that
$A_k$ is a quotient also of $G_{K_d}$.
Since $k$ was arbitrary and $A_k$ has at least $k$ distinct quotients of order $d$,
this contradicts that $G_{K_{d}}$ is small.

Similarly, $K(p_0)=K$: Indeed, otherwise let $M|K$ be a $C_{p_0}$-extension.
Since $q_m>p_0$ for all $m$, the argument of the previous paragraph shows that there is a 
$C_{p_0}$-extension $M_0$ of $K_0$, contradicting our choice of $K_0$.
\end{proof}

\begin{Example} \label{Ex:t} 
We construct a field $K$ with $v_K \in H_1(K)$, 
$\mathrm{char}(Kv_K)=0$ and
such that $v_K$ is $\emptyset$-$\forall\exists$-definable but
not $\emptyset$-$\exists\forall$-definable. 
Note that Observation \ref{Obshen} implies that
in this case $v_K$ cannot be
$\emptyset$-definable as such. Furthermore, by Observation \ref{Obs:t}, 
for any such field $K$, we have $Kv_K$ $t$-henselian. 

By Construction \ref{Con} and Proposition \ref{Prop:small}, 
for any prime $p$ there is a field $k$ which 
\begin{itemize}
\item is $t$-henselian but not henselian,
\item has characteristic $0$, 
\item satisfies $k=k(p)$ and
\item has small absolute Galois group.
\end{itemize}
We now repeat the construction from Example \ref{Ex:H2}. 
Define again $H_i=\mathbb{Z}\oplus\mathbb{Q}$,
$\Gamma=H_1\oplus H_2\oplus\dots$,
$G_1:=\mathbb{Q}\oplus \Gamma$,
$G_2:=H_0\oplus\Gamma$,
$K_1:=k((G_1))$
and
$K_2:=k((G_2))$.
As in Example \ref{Ex:H2}, we have
 $G_1\prec_\exists G_2$ and $K_1\prec_\exists K_2$.
Let $v_i$ denote the valuation on 
$K_i$ with value group $G_i$ residue field $k$,
and let $w$ denote the valuation on $K_1$ with value group $\Gamma$ and residue field $k((\mathbb{Q}))$.
By Proposition \ref{kQ}, we have $k\equiv k((\mathbb{Q}))$.
Therefore, the Ax-Kochen/Ersov Theorem (\cite[4.6.4]{P2}) implies
$$(K_1,w)\equiv(K_2,v_2).$$

We now have $v_{2} = v_{K_2} =v_{K_2}^p$
as $k=k(p)$ holds and as $\Gamma$ has no $p$-divisible convex subgroup.
Thus, $v_2$ is $\emptyset$-$\forall\exists$-definable by Proposition \ref{p2}.
Just like in Example
\ref{Ex:H2}, the restriction of $v_2$ to $K_1$ gives $v_1$
which is a proper refinement of
$w$. 

Thus, $v_2= v_{K_2}$ is $\emptyset$-$\forall\exists$-definable but
not 
$\emptyset$-$\exists\forall$-definable (see Theorem \ref{P}).
\end{Example}

\section*{Acknowledgements}
\noindent 
The authors would like to thank Jochen Koenigsmann and Alexander Prestel 
for many interesting discussions on the subject.
In particular, an earlier (unpublished) proof of 
Jochen Koenigsmann gave us the idea to 
prove Proposition 5.5.
We would also like to thank Immanuel Halupczok 
for providing an example of a specific ordered abelian group
as well as Volker Weispfenning for presenting his results in \cite{We} in an exceptionally clear and helpful way.

\end{document}